\newcommand{\Kk}{\mathcal{K}}
\newcommand{\noyau}{\mathbf{K}}
\newcommand{\Ss}{\mathcal{S}}
\newcommand{\Sn}{S_N}
\newcommand{\Pp}{\mathcal{P}}
\newcommand{\Qq}{\mathcal{Q}}
\newcommand{\Cc}{\mathcal{C}}
\newcommand{\Bb}{\mathcal{B}}
\newcommand{\rk}{\text{rk}}
\newcommand{\Iro}{{\mathbf I_{r0}}}
\newcommand{\RR}{\mathbb{R}}
\newcommand{\NN}{\mathbb{N}}
\newcommand{\Kcal}{\Kk}
\newcommand{\KkS}{\RR^{MN^2 \times CS^2N^2}}    
\newcommand{\Kbf}{\noyau}
\newcommand{\KbfS}{\RR^{M\times C\times k\times k}}    
\newcommand{\Kbb}{\mathbb K}
\newcommand{\KbbO}{\mathbb K^\perp}
\newcommand{\Kbfname}{kernel tensor}
\newcommand{\Kcalname}{layer transform matrix}
\DeclareMathOperator{\err}{err}
\newcommand{\ERR}[2] {\err^{#1}_{#2}}
\DeclareMathOperator{\Id}{Id}
\DeclareMathOperator{\Vect}{Vect}
\DeclareMathOperator{\conv}{conv}
\DeclareMathOperator{\CONV}{{\bf conv}}
\newcommand{\Yes}{\textcolor{green}{\Checkmark}}
\newcommand{\No}{\textcolor{red}{\XSolidBrush}}
\newcommand{\Deellip}{
   \textit{DEEL.LIP}
}
\begin{document}
\title{Existence, Stability and Scalability of Orthogonal Convolutional Neural Networks}

\author{\name El Mehdi Achour \email el\_mehdi.achour@math.univ-toulouse.fr \\
       \addr Institut de Mathématiques de Toulouse ; UMR 5219\\
          Université de Toulouse ; CNRS \\
          UPS IMT F-31062 Toulouse Cedex 9, France\\
       \AND
       \name François Malgouyres \email francois.malgouyres@math.univ-toulouse.fr \\
       \addr Institut de Mathématiques de Toulouse ; UMR 5219\\
          Université de Toulouse ; CNRS \\
          UPS IMT F-31062 Toulouse Cedex 9, France\\
       \AND
       \name Franck Mamalet \email franck.mamalet@irt-saintexupery.com \\
       \addr Institut de Recherche Technologique Saint Exupéry, Toulouse, France}

\editor{Kilian Weinberger}
\ShortHeadings{Orthogonal Convolutional Neural Networks}{Achour, Malgouyres, and Mamalet}

\maketitle

\ifkeepmainpart

\begin{abstract}%
Imposing orthogonality on the layers of neural networks is known to facilitate the learning by limiting the exploding/vanishing of the gradient; decorrelate the features; improve the robustness.  This paper studies the theoretical properties of orthogonal convolutional layers. \\
\indent We establish necessary and sufficient conditions on the layer architecture guaranteeing the existence of an orthogonal convolutional transform. The conditions prove that orthogonal convolutional transforms exist for almost all architectures used in practice for 'circular' padding.
We also exhibit limitations with 'valid' boundary conditions and 'same' boundary conditions with zero-padding. \\
\indent Recently, a regularization term imposing the orthogonality of convolutional layers has been proposed, and impressive empirical results have been obtained in different applications \citep{wang2020orthogonal}.
The second motivation of the present paper is to specify the theory behind this.
We make the link between this regularization term and orthogonality measures. In doing so, we show that this regularization strategy is stable with respect to numerical and optimization errors and that, in the presence of small errors and when the size of the signal/image is large, the convolutional layers remain close to isometric.

The theoretical results are confirmed with experiments and the landscape of the regularization term is studied. Experiments on real data sets show that when orthogonality is used to enforce robustness, the parameter multiplying the regularization term
can be used to tune a tradeoff between accuracy and orthogonality, for the benefit of both accuracy and robustness. \\
\indent Altogether, the study guarantees that the regularization proposed in \citet{wang2020orthogonal} is an efficient, flexible and stable numerical strategy to learn orthogonal convolutional layers.

\end{abstract}

\begin{keywords}
  Convolutional layers, orthogonality, deep learning theory, vanishing/exploding gradient, robustness
\end{keywords}

\section{Introduction}\label{sec intro}

We first start by introducing the problem, related work and the context of this paper.
\subsection{On Orthogonal Convolutional Neural Networks}
 Orthogonality constraint has first been considered for fully connected neural networks \citep{arjovsky2016unitary}. For Convolutional Neural Networks~\citep{lecun1995convolutional,krizhevsky2012imagenet,xiang-nips-15}, the introduction of the orthogonality constraint is a way to improve the neural network in several regards. First, despite well-established solutions \citep{he2016deep,ioffe2015batch}, the training of very deep convolutional networks remains difficult. This is in particular due to vanishing/exploding gradient problems \citep{hochreiter1991untersuchungen,bengio1994learning}. As a result, the expressive capacity of convolutional layers is not fully exploited \citep{ioffe2015batch}. This can lead to lower performances on machine learning tasks. Also, the absence of constraint on the convolutional layer often leads to irregular predictions that are prone to adversarial attacks \citep{szegedy2013intriguing,nguyen2015deep}. Gradient vanishing/exploding avoidance, built-in robustness and better generalization capabilities are the main aims of the introduction of  Lipschitz \citep{szegedy2013intriguing,qian2018l2,gouk2021regularisation,tsuzuku2018lipschitz,sedghi2018singular} and orthogonality  constraints to convolutional layers \citep{xie2017all,cisse2017parseval,huang2018orthogonal,zhang2019approximated,li2019preventing,guo2019regularization,qi2020deep,wang2020orthogonal,trockman2021orthogonalizing,jia2019orthogonal,li2019efficient,huang2020controllable,jia2019orthogonal,bansal2018can,Xiao18CNN10000Layers}.  
 Orthogonal convolutional networks have been applied successfully in diverse applications, such as classification, segmentation, inpainting~\citep{wang2020orthogonal, Zhang20EEG,larrazabal2021orthogonal}, or recently in few-shot learning~\citep{osahor2022ortho}.
Orthogonality is also proposed for Generative Adversarial Networks (GAN)\citep{miyato2018spectral}, or even required for Wasserstein distance estimation, such as in Wasserstein-GAN \citep{arjovsky-bottou-2017,gulrajani2017improved}, and Optimal Transport based classifier~\citep{serrurier2021achieving}.

Orthogonal convolutional networks are made of several orthogonal convolutional layers. This means that, when expressing the computation performed by the layer as a matrix, the matrix is orthogonal. The term 'orthogonal' applies both to square and non-square matrices\footnote{The same property is sometimes called 'semi-orthogonal'.}. In the latter case, it includes two commonly distinguished but related notions: row-orthogonality and column-orthogonality. 
This article focuses on the theoretical properties of orthogonal convolutional layers. Furthermore, since deconvolution (also called transposed convolution) layers are defined using convolution layers, the results can also be applied to orthogonal deconvolution layers.
We will consider the architecture of a convolutional layer as characterized by $(M,C,k,S)$, where $M$  is the number of output channels, $C$ of input channels, convolution kernels are of size $k\times k$ and the stride parameter is $S$. Unless we specify otherwise, we consider convolutions with circular boundary conditions\footnote{Before computing a convolution the input channels are made periodic outside their genuine support.}.
Thus, applied on 
input channels of size $SN \times SN$, the  $M$ output channels are of size $N\times N$. We denote by $\Kbf\in\KbfS$ the \Kbfname~and by $\Kk\in\KkS$ the matrix that applies the convolutional layer of architecture $(M,C,k,S)$ to $C$ vectorized channels of size $SN\times SN$.

We will first answer the important questions:
\begin{itemize}
    \item {\bf Existence:} What is a necessary and sufficient condition on  $(M,C,k,S)$ and $N$ such that there exists an orthogonal convolutional layer (i.e. $\Kk$ orthogonal) for this architecture? How do the 'valid' and 'same' boundary conditions restrict the orthogonality existence?
\end{itemize}
Answers to these questions are respectively in Section \ref{sec:existence}, Theorem \ref{Prop existence cco} and Section \ref{other-padding-sec}, Proposition \ref{valid} and Proposition \ref{same}.
  
Besides, we will rely on recently published papers \citep{wang2020orthogonal,qi2020deep} which characterize orthogonal convolutional layers as the zero level set of a particular function that is called $L_{orth}$ in \citet{wang2020orthogonal}\footnote{The situation is more complex in \citep{wang2020orthogonal,qi2020deep}. One of the contributions of the present paper is to clarify the situation. We describe here the clarified statement.} (see Section \ref{sec:lorth} for details). Formally, $\Kk$ is orthogonal if and only if $L_{orth}(\Kbf)=0$. They use $L_{orth}$ as a regularization term and obtain impressive performances on several machine learning tasks \citep[see ][]{wang2020orthogonal}. The regularization is later successfully applied for medical image segmentation \citep{Zhang20EEG}, inpainting \citep{larrazabal2021orthogonal} and few-shot learning \citep{osahor2022ortho}.

In the present paper, we investigate the following theoretical questions:
\begin{itemize}
\item \textbf{Stability with regard to minimization errors:} Does $\Kk$ still have good `approximate orthogonality properties' when $L_{orth}(\Kbf)$ is small but non zero?
Without this guarantee, it could happen that $L_{orth}(\noyau) = 10^{-9}$ and $\|{\mathcal K}{\mathcal K}^T - Id \|_2 = 10^{9}$.
This would make the regularization with $L_{orth}$ useless, unless the algorithm reaches $L_{orth}(\noyau) = 0$.
\item \textbf{Scalability and stability with regard to N:} Remarking that, for a given \Kbfname~$\Kbf$, $L_{orth}(\noyau)$ is independent of $N$ but the \Kcalname~$\Kk$ depends on $N$: When $L_{orth}(\noyau)$ is small, does $\Kk$ remain approximately orthogonal and isometric when $N$ grows? If so, the regularization with $L_{orth}$ remains efficient even for very large $N$.
\item {\bf Optimization:} Does the landscape of $L_{orth}$ lend itself to global optimization?
\end{itemize}

We give a positive answer to these questions, thus showing theoretical bounds proving that the regularization with $L_{orth}$ is stable (see Theorem \ref{prop norme Frobenius}, Theorem \ref{Prop norme spectrale} and Section \ref{stable_expe_sec}), and can be used in most cases to ensure quasi-orthogonality of the convolutional layers (see Section \ref{landscape-sec} and Section \ref{m=CS2_expe_sec}).

We describe the related works in Section \ref{rel-work-sec} and give the main elements of context in Section \ref{context-sec}. The theorems constituting the main contributions of the article are in Section \ref{main-thm-sec}. Experiments illustrating the theorems, on the landscape of $L_{orth}$,  as well as experiments showing the benefits of approximate orthogonality on image classification problems are in Section \ref{exp-sec}. In particular, the latter shows that when orthogonality is used to enforce robustness, the regularization parameter $\lambda$ multiplying $L_{orth}(\Kbf)$ can be used to tune a tradeoff between accuracy and orthogonality, for the benefit of both accuracy and robustness. The code will be made available in the \Deellip\footnote{\url{https://github.com/deel-ai/deel-lip}} library.

For clarity, we only consider convolutional layers applied to images (2D)  in the introduction and the experiments. But we emphasize that the theorems in Section \ref{main-thm-sec} and their proofs are provided for both signals (1D) and images (2D).

\subsection{Related Work and Contributions}\label{rel-work-sec}

Orthogonal matrices form the Stiefel Manifold and were studied in \citet{edelman1998geometry}. In particular, the Stiefel Manifold is compact, smooth and of known dimension. It is made of several connected components. This can be a numerical issue since most algorithms have difficulty changing connected components during optimization. The Stiefel Manifold has many other nice properties that make it suitable for (local) Riemannian optimization \citep{lezcano2019cheap,li2019efficient}. Orthogonal convolutional layers are a subpart of this Stiefel Manifold. To the best of our knowledge, the understanding of orthogonal convolutional layers is weak. There is no paper focusing on the theoretical properties of orthogonal convolutional layers.

Many articles \citep{xu2012robustness,cisse2017parseval,sokolic2017robust,jia2019orthogonal,scaman2018lipschitz,gouk2021regularisation,farnia2018generalizable}
focus on Lipschitz and orthogonality constraints of the neural network layers from a statistical point of view, in particular in the context of adversarial attacks.

Many recent papers have investigated the numerical problem of optimizing a \Kbfname~$\Kbf$ under the constraint that $\Kk$ is orthogonal or approximately orthogonal. They also provide modeling arguments and experiments in favor of this constraint. We can distinguish two main strategies: {\bf kernel orthogonality} \citep{xie2017all,cisse2017parseval,huang2018orthogonal,zhang2019approximated,guo2019regularization,jia2019orthogonal,li2019efficient,huang2020controllable,jia2019orthogonal,bansal2018can,serrurier2021achieving} and {\bf convolutional layer orthogonality} \citep{li2019preventing,qi2020deep,wang2020orthogonal,trockman2021orthogonalizing,projUN}. The latter has been introduced more recently.

We denote the input of the layer by $X\in\RR^{C\times SN\times SN}$ and its output by $Y=\CONV(\Kbf,X) \in\RR^{M\times N\times N}$.
\begin{itemize}
    \item {\bf Kernel Orthogonality:} This class of methods views the convolution as a multiplication between a matrix $\overline{\Kbf} \in\RR^{M\times Ck^2}$ formed by reshaping the kernel tensor $\noyau$ 
    \citep[see, for instance,][for more details]{cisse2017parseval,wang2020orthogonal}
    and the matrix $U(X)\in\RR^{Ck^2 \times N^2}$ whose columns contain the concatenation of the $C$ vectorized patches of $X$ needed to compute the $M$ output channels at a given spatial position \citep[see ][]{heide2015fast,yanai2016efficient}. 
    We therefore have, $\Vect{(Y)} = \Vect{\left(\overline{\Kbf}U(X)\right)}$.
    The kernel orthogonality strategy enforces the orthogonality of the matrix $\overline{\Kbf}$.
    \item {\bf Convolutional Layer Orthogonality:} This class of methods connects the input and the output of the layer directly by writing $\Vect{(Y)} = \Kk \Vect{(X)}$ and enforces the orthogonality of $\Kk$. 
    The difficulty of this method is that the size of the matrix $\Kk \in \mathbb{R}^{MN^2 \times CS^2N^2}$ depends on $N$ and can be very large.
\end{itemize}

Kernel orthogonality provides a numerical strategy whose complexity is independent of $N$.
However, kernel orthogonality does not imply that $\Kk$ is orthogonal. In a nutshell, the problem is that the composition of an orthogonal embedding\footnote{Up to a re-scaling, when considering circular boundary conditions, the mapping $U$ is orthogonal.} and an orthogonal dimensionality reduction has no reason to be orthogonal. This phenomenon has been observed empirically in \citet{li2019preventing} and \citet{jia2019orthogonal}. The authors of \citet{wang2020orthogonal} and \citet{qi2020deep} also argue that, when $\Kcal$ has more columns than rows (row orthogonality), the orthogonality of 
$\overline{\Kbf}$ is necessary but not sufficient to guarantee $\Kk$ orthogonal. 
Kernel orthogonality and convolutional layer orthogonality are different, the latter better avoids gradient vanishing and feature correlation.

We can distinguish between two numerical ways of enforcing orthogonality during training:
\begin{itemize}
    \item {\bf Hard Orthogonality:} This method consists in keeping the matrix of interest orthogonal during the whole training process.
    This can be done either by optimizing on the Stiefel Manifold, or by considering a parameterization of a subset of orthogonal matrices \citep[e.g., ][]{li2019efficient,li2019preventing,trockman2021orthogonalizing,singla2021skew,huang2018orthogonal,zhang2019approximated,projUN}.
    Note that some hard convolutional layer orthogonality methods consider mappings of $\Kcal$, therefore resulting in convolutions with kernels of size larger than $k \times k$.
    \item {\bf Soft Orthogonality:} Another method to impose orthogonality of matrices during the optimization is to add a regularization of the type $\|WW^T-I\|^2$ to the loss of the specific task.
This regularization penalizes the matrices far from orthogonal \citep[e.g., ][]{bansal2018can,cisse2017parseval,qi2020deep,wang2020orthogonal,xie2017all,guo2019regularization,jia2019orthogonal,huang2020controllable}.
\end{itemize}

Note that, unlike Kernel Orthogonality, Convolutional Layer Orthogonality deals directly with $\Kk$, and thus has a complexity that generally depends on $N$. 
However, in the context of Soft Convolutional Layer Orthogonality, the authors of \citet{qi2020deep,wang2020orthogonal} introduce the regularizer $L_{orth}$ which is independent of $N$ (see Section \ref{sec:lorth} for details), as a surrogate to $\|\Kk\Kk^T -\Id_{MN^2}\|^2_F$ and $\|\Kk^T\Kk -\Id_{CS^2N^2}\|^2_F$.
In \citet{wang2020orthogonal}, orthogonal convolutional layers involving a stride are considered for the first time. 

~ \\
The present paper specifies the theory supporting the regularization with $L_{orth}$ and the construction of orthogonal convolutional layers. We give necessary and sufficient conditions on the architecture for the orthogonal convolutional layers to exist (see Theorem \ref{Prop existence cco}); we unify the $L_{orth}$ formulation for both Row-Orthogonality and Column-Orthogonality cases (see Definition \ref{Lorth-def}); and prove that the regularization with $L_{orth}$: 1/ is stable, i.e.  $L_{orth}(\Kbf)$ is small $\Longrightarrow~ \Kk\Kk^T - \Id_{MN^2}$ is small in various senses  (see Theorem \ref{prop norme Frobenius} and Theorem \ref{Prop norme spectrale}); 2/ leads to an orthogonality error that scales favorably when input signal size $N$ grows  (see Theorem \ref{Prop norme spectrale} and Section \ref{stable_expe_sec}).
We empirically show that, in most cases, the landscape of $L_{orth}$ is such that its minimization can be achieved by \textit{Adam}~\citep{Diederik14Adam}, a standard first-order optimizer (see Section \ref{landscape-sec}). We also identify and analyse the problematic cases  (see Section \ref{m=CS2_expe_sec}). We show numerically that approximate orthogonality is preserved when $N$ increases (see Section \ref{stable_expe_sec}).
Finally, we illustrate on Cifar10 and Imagenette data sets how the regularization parameter can be chosen to control the tradeoff between accuracy and orthogonality, for the benefit of both accuracy and robustness (see Section \ref{sec:dataset_exp}).

\subsection{Context}\label{context-sec}
In this section, we describe the context of the article by defining orthogonality, the regularization function $L_{orth}$ and the Frobenius and spectral norms of the orthogonality residuals, which are two measures of approximate orthogonality. We relate the latter to an approximate isometry property whose benefits are listed in Table \ref{ztzrt}. The main notations defined in this section are reminded in Table \ref{Notation_table}, in Appendix \ref{notation_sec}.

\subsubsection{Orthogonality}\label{sec orthogonality main part}

Given a kernel tensor $\noyau$, the convolutional layer transform matrix $\Kk$ can be written as:
\begin{align*}
\mathcal{K} = \left(
\begin{array}{c c c}
  \mathcal{M}(\noyau_{1,1}) & \ldots & \mathcal{M}(\noyau_{1,C}) \\
  \vdots & \vdots & \vdots \\
  \mathcal{M}(\noyau_{M,1}) &\ldots & \mathcal{M}(\noyau_{M,C})
\end{array}
\right) \in \mathbb{R}^{MN^2 \times CS^2N^2} \;,
\end{align*}
where $\mathcal{M}(\noyau_{i,j})$ is a matrix that computes a strided convolution for the kernel $\noyau_{i,j} = \noyau_{i,j,:,:}$, from the input channel $j$, to the output channel $i$ (See Appendix \ref{conv as matrix vec prod} for details). Notice that we use the 'matlab-colon-notation',
such that $\noyau_{i,j,:,:} = (\noyau_{i,j,m,n})_{0\leq m,n\leq k-1}\in\RR^{k\times k}$.

In order to define orthogonal matrices, we need to distinguish two cases:
\begin{itemize}
\item  {\bf Row case (RO case).} When the size of the input space of $\Kk\in\KkS$ is larger than the size of its output space, i.e. $M\leq CS^2$, $\Kk$ is orthogonal if and only if  its rows are  normalized and mutually orthogonal. Denoting the identity matrix $\Id_{MN^2}\in\RR^{MN^2\times MN^2}$, this is written
\begin{equation}\label{ortho_p}
\Kk\Kk^T =\Id_{MN^2}.
\end{equation}
In this case, the mapping $\Kk$ performs a dimensionality reduction.
\item {\bf Column case (CO case).} When $M \geq CS^2$, $\Kk$ is orthogonal if and only if its columns are  normalized and mutually orthogonal:
\begin{equation}\label{ortho_e}
\Kk^T\Kk =\Id_{CS^2N^2}.
\end{equation}
 In this case, the mapping $\Kk$ is an embedding.
\end{itemize}
Both the RO case and CO case are encountered in practice. When $M = CS^2$, the matrix $\Kk$ is square and if it is orthogonal then both \eqref{ortho_p} and \eqref{ortho_e} hold. The matrix $\Kk$ is then orthogonal in the usual sense and both $\Kk$ and $\Kk^T$ are isometric.

\subsubsection{The Function \texorpdfstring{$L_{orth}(\Kbf)$}{Lorth}}
\label{sec:lorth}

In this section, we define a variant of the function  $L_{orth} :\KbfS \longrightarrow \RR$ defined in \citet{wang2020orthogonal,qi2020deep}. The purpose of the proposed variant is to unify the properties of $L_{orth}$ in the RO case and CO case.

Reminding that $k \times k$ is the size of the convolution kernel, for any $h,g \in \mathbb{R}^{k\times k}$ and any $P\in\NN$,  we define $\conv(h,g,\text{padding zero} = P, \text{stride} = 1) \in \RR^{(2P+1)\times(2P+1)}$ as the convolution\footnote{As is common in machine learning, we do not flip $h$.} between  $h$ and the zero-padding of g  (see Figure \ref{fig:con_h_g}).
Formally, for all $ i, j \in \llbracket0,2P \rrbracket$,
\[ [\conv(h,g,\text{padding zero} = P, \text{stride} = 1)]_{i,j} =  \sum_{i'=0}^{k-1}\sum_{j'=0}^{k-1} h_{i',j'} \bar{g}_{i+i',j+j'},
\]
where $\bar{g} \in \mathbb{R}^{(k+2P)\times (k+2P)}$ is defined, for all $(i,j)\in\llbracket 0,k+2P-1 \rrbracket^2 $, by
\[\bar{g}_{i,j} = \left\{\begin{array}{ll}
g_{i-P,j-P} & \mbox{if } (i,j) \in \llbracket P,P+k-1 \rrbracket^2, \\
0 & \mbox{otherwise}.
\end{array}\right.
\]

\begin{figure}
    \centering
    \includegraphics[width=1.0\linewidth]{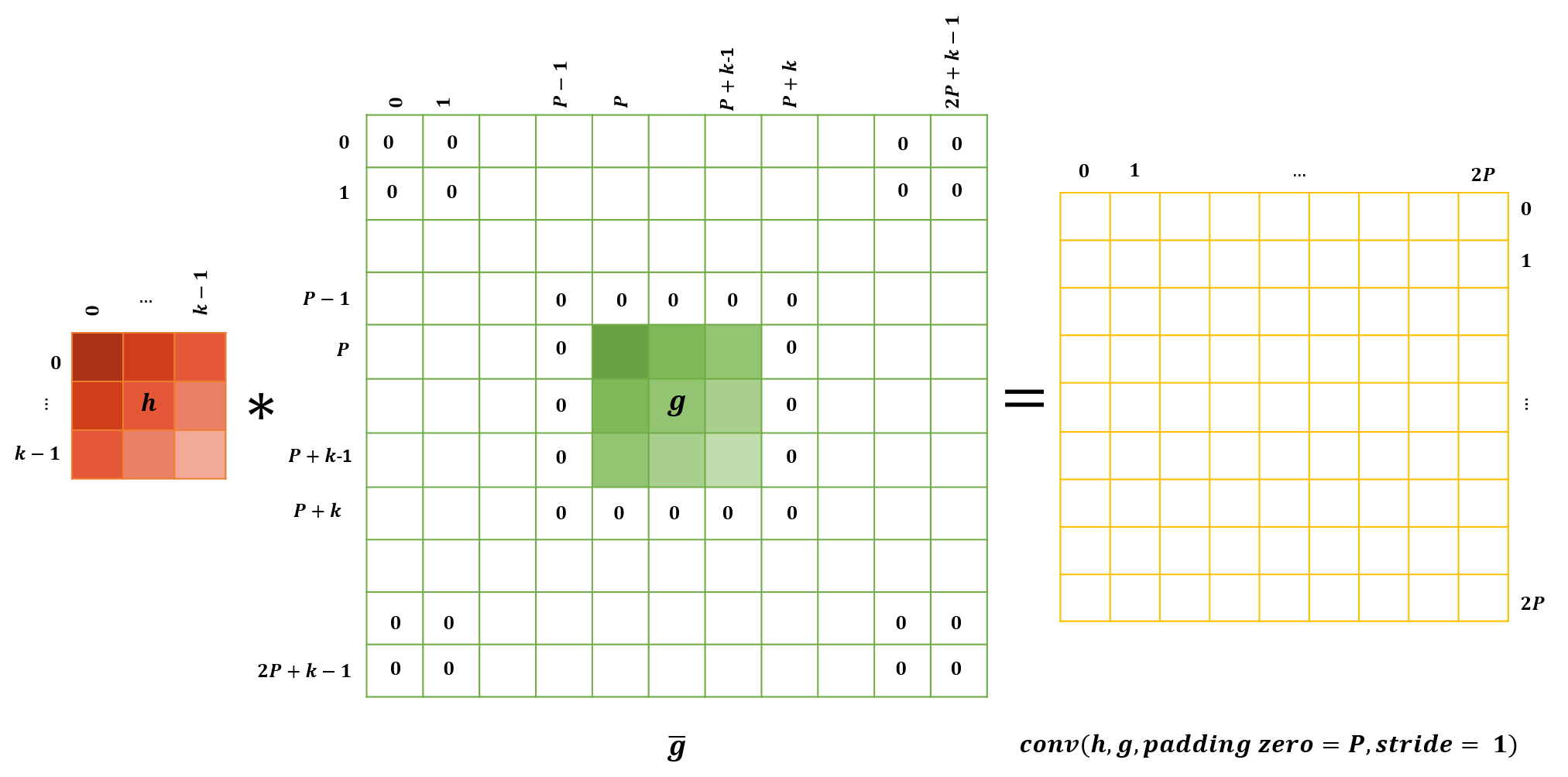}
    \caption{Illustration of $\conv(h,g,\text{padding zero} = P, \text{stride} = 1)$, in the 2D case. 
    }
\label{fig:con_h_g}
\end{figure}

We define $\conv(h,g,\text{padding zero} = P, \text{stride} = S) \in\RR^{(\lfloor 2 P/S \rfloor+1)\times(\lfloor 2 P/S \rfloor+1)}$, for all integer $S\geq 1$ and all $i,j \in  \llbracket 0,\lfloor 2 P/S \rfloor \rrbracket$, by 
\[[\conv(h,g,\text{padding zero} = P, \text{stride} = S)]_{i,j} =  [\conv(h,g,\text{padding zero} = P, \text{stride} = 1)]_{Si,Sj}.
\]

We denote (in bold) $\CONV(\noyau,\noyau,\text{padding zero} = P, \text{stride} = S) \in \mathbb{R}^{M \times M \times (\lfloor 2P/S \rfloor +1) \times (\lfloor 2P/S \rfloor +1)}$ the fourth-order tensor such that, for all $m,l \in \llbracket1,M \rrbracket$, 
\begin{multline*}
{\CONV(\noyau,\noyau,\text{padding zero} = P, \text{stride} = S)}_{m,l,:,:} \\
= \sum_{c=1}^C \conv(\noyau_{m,c},\noyau_{l,c},\text{padding zero} = P, \text{stride} = S) ,
\end{multline*}
where, for all $m\in\llbracket1,M \rrbracket$ and $c\in\llbracket1,C \rrbracket$, $\Kbf_{m,c}=\Kbf_{m,c,:,:}  \in\RR^{k\times k}$.\\

It has been noted in \citet{wang2020orthogonal} that, in the RO case, when $P = \left\lfloor \frac{k-1}{S} \right\rfloor S$,
\begin{align}\label{eq RO conv=Ir0}
  \Kk \quad \mbox{ orthogonal} \qquad \Longleftrightarrow \qquad \CONV(\noyau,\noyau,\text{padding zero} = P, \text{stride} = S) = \Iro ,  
\end{align}
where $\Iro \in \mathbb{R}^{M \times M \times (2 P/S +1)\times (2 P/S +1)}$ is the tensor whose entries are all zero except its central  $M \times M$ entry which is equal to an
identity matrix: $[\Iro]_{:,:, P/S, P/S } = Id_{M} $.

Therefore, denoting by $\|.\|_F$ the Euclidean norm in high-order tensor spaces, it is natural to define the following regularization penalty (we justify the CO case right after the definition).

\begin{definition}[$\mathbf{L_{orth}}$]\label{Lorth-def}
We denote by $P = \left\lfloor \frac{k-1}{S} \right\rfloor S$.
We define $L_{orth}: \KbfS\longrightarrow \RR_+$ as follows
\begin{itemize}
\item In the RO case, $M\leq CS^2$: 
\begin{equation}\label{Lorh_RO-eq}
    L_{orth}(\noyau) = {\| \CONV(\noyau,\noyau,\text{padding zero} = P, \text{stride} = S) - \Iro \|}_F^2~.
\end{equation}

\item In the CO case, $M\geq CS^2$: 
\[L_{orth}(\noyau) = {\| \CONV(\noyau,\noyau,\text{padding zero} = P, \text{stride} = S) - \Iro \|}_F^2 - (M-CS^2)~.
\]

\end{itemize}
\end{definition}
When $M=CS^2$, the two definitions trivially coincide. In the definition, the padding parameter $P$ is the largest multiple of $S$ strictly smaller than $k$. The difference with the definitions of $L_{orth}$ in \citet{wang2020orthogonal,qi2020deep} is in the CO case.
In this case with $S=1$, \citet{qi2020deep,wang2020orthogonal} use \eqref{Lorh_RO-eq} with $\noyau^T$ instead of $\Kbf$
.
For $S \geq 2$ in the CO case, we can not derive a simple equality as in \eqref{eq RO conv=Ir0}.
In \citet{wang2020orthogonal}, remarking that $\|\Kcal^T\Kcal - \Id_{CS^2N^2}\|_F^2  - \|\Kcal\Kcal^T - \Id_{MN^2}\|_F^2$ is a constant which only depends on the size of $\Kk$, the authors also argue that, whatever $S$,  one can also use \eqref{Lorh_RO-eq} in the CO case. 
We alter this 
in the CO case as in Definition \ref{Lorth-def} to obtain both in the RO case and the CO case:
\[\Kk \mbox{ orthogonal}  \qquad \Longleftrightarrow \qquad L_{orth}(\noyau) = 0.
\]
Once adapted to our notations, the authors in \citet{wang2020orthogonal,qi2020deep} propose to regularize convolutional layers parameterized by $(\noyau_l)_l$ by optimizing
\begin{equation}\label{model_regularise}
L_{task} + \lambda \sum_{l} L_{orth}(\noyau_l)
\end{equation}
where $L_{task}$ is the original objective function of a machine learning task. The function $ L_{orth}(\noyau)$ does not depend on $N$ and can be implemented in a few lines of code with Neural Network frameworks. Its gradient is then computed using automatic differentiation.

Of course, when doing so, even if the optimization is efficient, we expect $L_{orth}(\noyau_l)$ to be different from $0$ but 
less than $\varepsilon$,
for a small $\varepsilon$.
We investigate, in this article, whether, in this case, the transformation matrix $\Kk$, still satisfies useful orthogonality properties.
To quantify how much $\Kk$ deviates from being orthogonal, we define the approximate orthogonality criteria and approximate isometry property in the next section.
These notions allow to state
the stability and scalability theorems (Sections~\ref{sec:frobenius norm} and~\ref{sec:spectral norm}) and guarantee that the singular values remain close to $1$ when  $L_{orth}$ is small, even when $N$ is large.
This proves that the benefits related to the orthogonality of the layers, which are presented in Table \ref{ztzrt}, still hold.

\subsubsection{Approximate Orthogonality and Approximate Isometry Property}\label{ao-sec}

Perfect orthogonality is an idealization that never happens, due to floating-point arithmetic, and numerical and optimization errors. In order to measure how $\Kk$ deviates from being orthogonal, we define the {\bf orthogonality residual} by $\Kk\Kk^T - \Id_{MN^2}$, in the RO case, and $\Kk^T\Kk -\Id_{CS^2N^2}$, in the CO case.
Considering both the Frobenius norm $\|.\|_F$ of the orthogonality residual and its spectral norm  $\|.\|_2$, we have two criteria:
\begin{equation}\label{Frobenius_ortho_error}
\ERR{F}{N}(\Kbf) = \left\{\begin{array}{ll}
\|\Kk\Kk^T -\Id_{MN^2}\|_F & \mbox{, in the RO case,}\\
\|\Kk^T\Kk -\Id_{CS^2N^2}\|_F& \mbox{, in the CO case,}
\end{array}\right.
\end{equation}
and
\begin{equation}\label{spectral_ortho_error}
\ERR{s}{N}(\Kbf) = \left\{\begin{array}{ll}
\|\Kk\Kk^T -\Id_{MN^2}\|_2 & \mbox{, in the RO case,}\\
\|\Kk^T\Kk -\Id_{CS^2N^2}\|_2& \mbox{, in the CO case.}
\end{array}\right.
\end{equation}
When $M=CS^2$, the definitions in the RO case and the CO case coincide. 
The two criteria are of course related since for any matrix $A\in\RR^{a\times b}$, the Frobenius and spectral norms are such that
\begin{equation}\label{norm_equiv}
\|A\|_F \leq\sqrt{ \min(a,b)}\|A\|_2 \qquad \mbox{and}\qquad\|A\|_2 \leq  \|A\|_F~.
\end{equation}
However, the link is weak, when $\min(a,b)$ is large.

The regularization with $(\ERR{F}{N}(\Kbf))^2$ is a natural way to enforce soft-orthogonality of $\Kcal$. However, as mentioned in the introduction, it is not practical because the sizes of $\Kcal$ are too large. We will see in Theorem \ref{prop norme Frobenius} that $(\ERR{F}{N}(\Kbf))^2$ and $L_{orth}(\Kbf)$ differ by a multiplicative constant and it will make a clear connection between the regularization with $L_{orth}(\Kbf)$ and the regularization with $(\ERR{F}{N}(\Kbf))^2$. However, $\ERR{F}{N}(\Kbf)$ is difficult to interpret, this is why we consider $\ERR{s}{N}(\Kbf)$ which relates to the {\it approximate isometry property} of $\Kcal$ as we explain below. The latter has direct consequences on the properties of the layer
(see Table \ref{ztzrt}).

Indeed, in the applications, one key property of orthogonal operators is their connection to isometries. It is the property that prevents the gradient from exploding and vanishing \citep{cisse2017parseval,Xiao18CNN10000Layers,li2019efficient,huang2020controllable}. This property also enables to keep the examples well separated, which has an effect similar to the batch normalization \citep{qi2020deep,chai2020separating}, and to have a $1$-Lipschitz forward pass and therefore improves robustness \citep{wang2020orthogonal,cisse2017parseval,li2019preventing,trockman2021orthogonalizing,jia2019orthogonal}.

We denote the Euclidean norm of a vector by $\|.\|$. 
To clarify the connection between orthogonality and isometry, we define the `$\varepsilon$-Approximate Isometry Property' ($\varepsilon$-AIP).

\begin{definition}\label{e-aip-def}
A \Kcalname~$\Kk\in\KkS$ satisfies the $\varepsilon$-Approximate Isometry Property if and only if
\begin{itemize}
\item RO case, $M\leq CS^2$: 
\begin{equation*}
\left\{\begin{array}{ll}
\forall x\in\RR^{CS^2N^2}& \|\Kk x \|^2 \leq (1+\varepsilon)  \|x\|^2 \\
\forall y\in\RR^{MN^2}& (1-\varepsilon)  \|y\|^2  \leq \|\Kk^T y \|^2 \leq (1+\varepsilon)  \|y\|^2
\end{array}\right.
\end{equation*}
\item CO case, $M\geq CS^2$:
\begin{equation*}
\left\{\begin{array}{ll}
\forall x\in\RR^{CS^2N^2}& (1-\varepsilon)  \|x\|^2  \leq \|\Kk x \|^2 \leq (1+\varepsilon)  \|x\|^2 \\
\forall y\in\RR^{MN^2}& \|\Kk^T y \|^2 \leq (1+\varepsilon)  \|y\|^2 
\end{array}\right.
\end{equation*}
\end{itemize}
\end{definition}

The following proposition makes the link between $\ERR{s}{N}(\Kbf)$ and AIP. It shows that minimizing $\ERR{s}{N}(\Kbf)$ enhances the AIP property.  
\begin{proposition}\label{lemme lipchitz norme spectrale}
    Let $N$ be such that $SN \geq k $. We have, both in the RO case and CO case,
  	\[ \Kk \mbox{ is } {\ERR{s}{N}(\Kbf)}\mbox{-AIP}.
	\]
\end{proposition}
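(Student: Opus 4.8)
The plan is to reduce the whole statement to the spectral behaviour of the two Gram matrices $\Kk\Kk^T$ and $\Kk^T\Kk$, using the elementary fact that for a symmetric matrix $B$ one has $|x^T B x| \leq \|B\|_2\,\|x\|^2$ for every vector $x$. The hypothesis $SN\geq k$ serves only to guarantee that $\Kk\in\KkS$ is a genuine matrix of the announced size, so that the Gram matrices and their spectral norms are well defined; it plays no further role.

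First I would dispatch the two \emph{two-sided} bounds, which come directly from the definition of $\ERR{s}{N}$. In the RO case, writing $\varepsilon=\ERR{s}{N}(\Kbf)=\|\Kk\Kk^T-\Id_{MN^2}\|_2$, I expand, for any $y\in\RR^{MN^2}$,
\[
\|\Kk^T y\|^2 = y^T\Kk\Kk^T y = \|y\|^2 + y^T\bigl(\Kk\Kk^T-\Id_{MN^2}\bigr)y,
\]
and bound the last term in absolute value by $\varepsilon\|y\|^2$. This yields simultaneously $(1-\varepsilon)\|y\|^2\leq\|\Kk^T y\|^2\leq(1+\varepsilon)\|y\|^2$, which is exactly the $y$-line required in the RO case. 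In the CO case the identical computation with $\Kk^T\Kk$ in place of $\Kk\Kk^T$ gives the two-sided bound on $\|\Kk x\|^2$ needed for the $x$-line.

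It then remains to establish the single \emph{one-sided} upper bound in each case: $\|\Kk x\|^2\leq(1+\varepsilon)\|x\|^2$ in the RO case, and $\|\Kk^T y\|^2\leq(1+\varepsilon)\|y\|^2$ in the CO case. The key observation is that $\Kk\Kk^T$ and $\Kk^T\Kk$ are symmetric positive semi-definite and share the same nonzero eigenvalues, namely the squared singular values of $\Kk$; in particular $\|\Kk\Kk^T\|_2=\|\Kk^T\Kk\|_2=\sigma_{\max}(\Kk)^2$. In the RO case, $\|\Kk\Kk^T-\Id_{MN^2}\|_2\leq\varepsilon$ forces every eigenvalue of $\Kk\Kk^T$ into $[1-\varepsilon,1+\varepsilon]$, so that $\sigma_{\max}(\Kk)^2\leq 1+\varepsilon$, and hence $\|\Kk x\|^2 = x^T\Kk^T\Kk x \leq \sigma_{\max}(\Kk)^2\|x\|^2 \leq (1+\varepsilon)\|x\|^2$. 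The CO case is symmetric: $\|\Kk^T\Kk-\Id_{CS^2N^2}\|_2\leq\varepsilon$ bounds $\sigma_{\max}(\Kk)^2$ from above, and one controls $\|\Kk^T y\|^2=y^T\Kk\Kk^T y$ in the same way.

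I do not expect a genuine obstacle here; the only point requiring care is conceptual rather than computational. The AIP definition is deliberately one-sided in one variable in each case, and one must \emph{not} hope for a lower bound on $\|\Kk x\|^2$ in the RO case: there $\Kk$ is a strict dimensionality reduction ($MN^2<CS^2N^2$), so it has a nontrivial kernel and a vanishing smallest singular value, and a bound of the form $(1-\varepsilon)\|x\|^2\leq\|\Kk x\|^2$ is in general false. The clean matching between the controllable direction (the Gram matrix appearing in the definition of $\ERR{s}{N}$) and the two-sided line of the AIP definition is precisely what makes the statement hold with $\varepsilon=\ERR{s}{N}(\Kbf)$ and no extra constant.
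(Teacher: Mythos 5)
Your proof is correct and follows essentially the same route as the paper's: the paper proves the statement for a general tall matrix $A$ (taken to be $\Kk^T$ in the RO case and $\Kk$ in the CO case), obtaining the two-sided bound from the quadratic-form inequality $|x^T(A^TA-\Id)x|\leq \|A^TA-\Id\|_2\|x\|^2$ and the one-sided bound from $\sigma_{max}(A^T)=\sigma_{max}(A)$, which is exactly your pair of ingredients. Your remark that $SN\geq k$ plays no real role also matches the paper's observation that the result holds for arbitrary matrices.
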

This statement actually holds for any matrix (not only \Kcalname) and is already stated in \citet{bansal2018can,guo2019regularization}. For completeness, we provide proof, in Appendix \ref{proof lemme lipchitz norme spectrale}.

In Proposition \ref{lemme lipchitz norme spectrale} and in Theorem \ref{Prop existence cco} (see the next section), the condition $SN \geq k $ only states that the input 
width and height are larger than the size of the kernels. This is always the case in practice.

\begin{table}
\begin{center}
\begin{tabular}{llcccc}
\toprule
& & \multicolumn{2}{c}{Forward pass} &  \multicolumn{2}{c}{Backward pass}\\
\cline{3-6}
& & Lipschitz& Keep examples&  Prevent & Prevent  \\
& &Forward pass& separated & grad. expl.  & grad. vanish.  \\
\hline
Convolutional & $M<CS^2$ &  \Yes & \No&  \Yes &  \Yes   \\
\cline{2-6} 
layer  & $M> CS^2$ &  \Yes & \Yes& \Yes & \No  \\
\hline
Deconvolution &$M<CS^2$ &  \Yes &\Yes&  \Yes &\No\\
\cline{2-6} 

layer& $M>CS^2$ &  \Yes & \No&\Yes & \Yes\\
\hline
Conv. \& Deconv.  & $M=CS^2$ &  \Yes & \Yes& \Yes & \Yes  \\
\bottomrule
\end{tabular}
\end{center}
\caption{Properties of a $\varepsilon$-AIP layers (when $\varepsilon \ll 1$), depending on whether $\Kbf$ defines a convolutional or deconvolutional layer. The red crosses indicate when the forward or backward pass 
performs a dimensionality reduction.\label{ztzrt}}
\end{table}

We summarize in Table \ref{ztzrt} the properties of $\varepsilon$-AIP layers when $\varepsilon$ is small, in the different possible scenarios.
We remind that a \Kbfname~$\Kbf$ can define a convolutional layer or a deconvolution layer. Deconvolution layers are, for instance, used to define layers of the decoder of an auto-encoder or variational auto-encoder~\citep{Kingma2014VAE}. In the convolutional case, $\Kcal$ is applied during the forward pass and $\Kcal^T$ is applied during the backward pass. In a deconvolution layer, $\Kcal^T$ is applied during the forward pass and $\Kcal$ during the backward pass. Depending on whether we have $M<CS^2$, $M>CS^2$ or $M=CS^2$, when $\Kk$ is $\varepsilon$-AIP with $\varepsilon<<1$, either $\Kcal^T$, $\Kcal$ or both preserve distances (see Table \ref{ztzrt}).

To complement Table \ref{ztzrt}, notice that in the RO case, if $\ERR{F}{N}(\Kbf) \leq \varepsilon$, then for any $i$, $j$ with $i\neq j$, we have $ |\Kk_{i,:} \Kk^T_{j,:}  |\leq  \varepsilon$, where $\Kk_{i,:}$ is the $i^{\mbox{th}}$ line of $\Kk$. In other words, when $\varepsilon$ is small, the features computed by $\Kk$ are mostly uncorrelated \citep{wang2020orthogonal}.

\section{Theoretical Analysis of Orthogonal Convolutional Layers}\label{main-thm-sec}

This section contains the theoretical contributions of the article.  In all the theorems in this section, the considered convolutional layers are either applied to a signal, when $d=1$, or an image, when $d=2$.

We remind that the architecture of the layer is characterized by $(M,C,k,S)$ where: $M$ is the number of output channels; $C$ is the number of input channels; $k\geq 1$ is an odd positive integer and the convolution kernels are of size $k$, when $d=1$, and $k\times k$, when $d=2$; the stride parameter is $S$.

We want to highlight that the theorems of Sections \ref{sec:existence}, \ref{sec:frobenius norm} and \ref{sec:spectral norm} are for convolution operators defined with circular boundary conditions (see Appendix \ref{conv as matrix vec prod} for details).
We point out in Section \ref{other-padding-sec} restrictions for the `valid' and `same' zero-padding boundary conditions (see Appendix \ref{Proof of proposition valid} and Appendix \ref{Proof of Proposition same} for details).

With circular boundary conditions, all input channels are of size $SN$, when $d=1$, $SN\times SN$, when $d=2$. The output channels are of size $N$ and $N\times N$, respectively when $d=1$ and $2$.
When $d=1$, the definitions of $L_{orth}$, $\ERR{F}{N}$ and $\ERR{s}{N}$ are in Appendix \ref{notation1D-sec}.

In Section \ref{sec:existence}, we state a theorem that provides the necessary and sufficient conditions on the architecture for an orthogonal convolutional layer to exist. In Section \ref{other-padding-sec}, we describe restrictions for the 'valid' and 'same' boundary conditions.
In Section \ref{sec:frobenius norm}, we state a theorem that provides a relation between the Frobenius norm of the orthogonality residual and the regularization penalty $L_{orth}$.
Finally, in Section \ref{sec:spectral norm}, we state a theorem that provides an upper bound of the spectral norm of the orthogonality residual using the regularization penalty $L_{orth}$.

\subsection{Existence of Orthogonal Convolutional Layers}
\label{sec:existence}
The next theorem gives a necessary and sufficient condition on the architecture 
$(M,C,k,S)$ and $N$ for an orthogonal convolutional layer transform
to exist. To simplify notations, we denote, for $d=1$ or $2$, the space of all the \Kbfname{s} by
\[\Kbb_d = \left\{\begin{array}{ll}
\RR^{M \times C \times k} & \mbox{when }d=1, \\
\RR^{M \times C \times k \times k} & \mbox{when }d=2 .
\end{array}\right.
\]

We also denote, for $d=1$ or $2$,
\[\KbbO_d = \{\Kbf \in\Kbb_d |~ \Kcal\mbox{ is orthogonal}\}.
\]
\begin{theorem}\label{Prop existence cco}
Let $N$ be such that $SN \geq k $ and $d=1$ or $2$. 
    \begin{itemize}
        \item RO case, i.e. $M \leq CS^d$:
        $~~\KbbO_d  \neq\emptyset ~~ \mbox{ if and only if }~~ M \leq Ck^d\;.
        $
        \item CO case, i.e. $M \geq CS^d$:
        $~~\KbbO_d  \neq \emptyset~~ \mbox{ if and only if }~~ S \leq k\;.
        $
    \end{itemize}
\end{theorem}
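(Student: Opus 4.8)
The plan is to reduce the orthogonality of $\Kk$ to an orthogonality condition on a single finite matrix, by exploiting that a strided circular convolution whose kernel fits inside one stride block decouples spatially. I would first isolate this as the key observation. Fix $d\in\{1,2\}$ and set $r=\min(S,k)$. Suppose $\noyau$ is supported on the taps $\{0,\dots,r-1\}^d$, and let $A\in\RR^{M\times Cr^d}$ be its reshaping (one row per output channel, one column per pair (input channel, tap)). Because $r\le S$, the windows $\{Sn+t:t\in\{0,\dots,r-1\}^d\}$ attached to the output positions $n\in\{0,\dots,N-1\}^d$ are pairwise disjoint and, using $SN\ge k$, do not wrap around. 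Hence, up to permutations of its rows and columns, $\Kk$ is block-diagonal with $N^d$ diagonal blocks all equal to $A$ (together with possible extra zero columns, namely when $r<S$), i.e. $\Kk\cong \Id_{N^d}\otimes A$ on the read coordinates. I would record the two immediate consequences: $\Kk\Kk^T=\Id \Leftrightarrow AA^T=\Id_M$, and, when moreover $r=S$ so that the windows tile the whole input, $\Kk^T\Kk=\Id \Leftrightarrow A^TA=\Id_{Cr^d}$.

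With this lemma both sufficiency directions are short. In the RO case, assuming $M\le Ck^d$ together with $M\le CS^d$, I take $r=\min(S,k)$, so that $Cr^d=\min(CS^d,Ck^d)\ge M$, and pick any $A\in\RR^{M\times Cr^d}$ with orthonormal rows; the lemma gives $\Kk\Kk^T=\Id$, hence $\Kk$ orthogonal. In the CO case, assuming $S\le k$ together with $M\ge CS^d$, I take $r=S\le k$, so that $Cr^d=CS^d\le M$ and the windows tile the input; I pick any $A\in\RR^{M\times CS^d}$ with orthonormal columns, and the lemma gives $\Kk^T\Kk=\Id$. In both cases the only input is the elementary fact that a rectangular matrix can be completed to have orthonormal rows (resp. columns) exactly when it has at least as many columns (resp. rows) as rows (resp. columns).

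For necessity I argue directly on $\Kk$, without any construction. In the RO case, I compare the two rows of $\Kk$ indexed by output channels $m,l$ at one common spatial position; using $SN\ge k$ to ensure the $k$ taps of a given row land on distinct input coordinates, their inner product equals $\sum_c\langle\noyau_{m,c},\noyau_{l,c}\rangle=(\overline{\noyau}\,\overline{\noyau}^T)_{m,l}$, which is precisely the central entry of \eqref{eq RO conv=Ir0}. Thus $\Kk\Kk^T=\Id$ forces $\overline{\noyau}\,\overline{\noyau}^T=\Id_M$ with $\overline{\noyau}\in\RR^{M\times Ck^d}$, and a rank bound gives $M\le Ck^d$. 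In the CO case I prove the contrapositive: if $S>k$, then in each stride block the input positions $[Sn+k,Sn+S)$ are read by no output (the windows have width $k<S$ and do not wrap), so $\Kk$ has a zero column, which is incompatible with $\Kk^T\Kk=\Id$; hence orthogonality forces $S\le k$.

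The main obstacle I anticipate is purely bookkeeping: stating the spatial decoupling lemma precisely for general $d$ and circular boundaries, i.e. writing the row/column permutations that bring $\Kk$ to the block-diagonal form $\Id_{N^d}\otimes A$, and carefully tracking the two regimes $S\le k$ and $S>k$, which govern the window width $r$, whether the windows tile or merely embed into the input, and whether wrap-around can occur. Everything downstream — the orthonormal completions and the zero-column and Gram-of-$\overline{\noyau}$ computations — is routine linear algebra once the decoupling is in place.
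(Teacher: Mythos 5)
Your proof is correct, and it takes a genuinely different route from the paper's. The paper proves sufficiency by exhibiting one explicit kernel tensor whose slices are canonical basis vectors $e_i$ (respectively $e_{i,j}$) distributed across channels, and then verifies $\Kk\Kk^T=\Id$ or $\Kk^T\Kk=\Id$ through circulant-matrix identities (its Lemmas on $S_N C(P_{SN}(e_j)) C(P_{SN}(e_l))^T S_N^T=\delta_{j=l}\Id_N$ and $\sum_{z} C(P_{SN}(e_z))^T S_N^T S_N C(P_{SN}(e_z))=\Id_{SN}$); for necessity, in both the RO and CO cases, it counts non-zero columns of $\Kk$ (at most $CkN^d$) and concludes via a rank bound $\rk(\Kk\Kk^T)\leq Ck^dN^d < MN^d$ (resp. $<CS^dN^d$). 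Your decoupling lemma — that a kernel supported in a $\min(S,k)^d$ stride block makes $\Kk$ permutation-equivalent to $\Id_{N^d}\otimes A$ plus zero columns — subsumes the paper's construction as the special case where $A$ is a $0/1$ selection matrix, and it buys more: it produces the entire family of such orthogonal layers from orthonormal completion of a finite matrix, with no circulant machinery. Your necessity arguments also differ: in the RO case you show that orthogonality of $\Kk$ forces $\overline{\noyau}\,\overline{\noyau}^T=\Id_M$ (i.e., layer orthogonality implies kernel orthogonality, a fact the paper only cites informally in its related-work discussion, consistent with its equation \eqref{eq RO conv=Ir0}), and then conclude by a rank bound on $\overline{\noyau}\in\RR^{M\times Ck^d}$; in the CO case your zero-column observation is a slightly more elementary version of the paper's rank count. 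What the paper's route buys in exchange is that its circulant formalism ($C(\cdot)$, $P_{SN}$, $S_N$ and their interplay) is shared infrastructure reused in the proofs of its Frobenius and spectral stability theorems, whereas your lemma is self-contained and serves only the existence statement. The one imprecision worth fixing is in your CO necessity: individual windows can wrap around the circular boundary (because of the centering offset), so the honest statement is that the union of all windows covers at most $kN<SN$ positions per channel, which still yields the zero column you need.
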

Theorem \ref{Prop existence cco} is proved in Appendix \ref{proof Prop existence cco}.
Again, the conditions coincide when $M=CS^d$. 

When $S \leq k$, which is by far the most common situation, there exist orthogonal convolutional layers in both the CO case and the RO case. Indeed, in the RO case, when $S \leq k$, we have $M \leq CS^d \leq Ck^d$.

However, skip-connection layers (also called shortcut connection) with stride in Resnet~\citep{he2016deep} for instance, usually have an architecture $(M,C,k,S)$ = $(2C,C,1,2)$, where $C$ is the number of input channels.
The kernels are of size $1\times 1$. In that case, $M\leq CS^d $ and $M>Ck^d $. Theorem \ref{Prop existence cco} says that there is no orthogonal convolutional layer for this type of layer. 

To conclude, the main consequence of Theorem \ref{Prop existence cco} is that, with circular boundary conditions and for most of the architecture used in practice (with an exception for the skip-connections with stride), there exist orthogonal convolutional layers.

\subsection{Restrictions due to Boundary Conditions}\label{other-padding-sec}
In Sections \ref{sec:existence}, \ref{sec:frobenius norm} and \ref{sec:spectral norm}, we consider convolutions defined with circular boundary conditions.
This choice is neither for technical reasons nor to enable the use of the Fourier basis.
We illustrate in the next two propositions that, for convolutions defined with the 'valid' condition, or the 'same' condition with zero-padding, hard-orthogonality is in many situations too restrictive. We consider in this section an unstrided convolution.

Before stating the next proposition, we remind that with the 'valid' boundary conditions, only the entries of the output such that the support of the translated kernel is entirely included in the input's domain are computed. The size of each
output channel is smaller than the size of the input channels. The formal definition of the 'valid' boundary conditions is at the beginning of Appendix \ref{Proof of proposition valid}.
\begin{proposition}\label{valid}
    Let $N \geq 2k-1$.
    With the 'valid' condition, there exists no orthogonal convolutional layer in the CO case.
\end{proposition}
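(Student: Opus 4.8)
The plan is to derive a contradiction from a global Frobenius-norm identity together with a rigid boundary constraint. Throughout I work with $S=1$ and $d\in\{1,2\}$, so the input channels have size $N^d$, the 'valid' output channels have size $(N-k+1)^d$, and $\Kk\in\RR^{M(N-k+1)^d\times CN^d}$. In the CO case, $\Kk$ orthogonal means $\Kk^T\Kk=\Id_{CN^d}$, i.e.\ $\Kk$ is an isometric embedding with orthonormal columns. I would show that $\sum_{m,c}\|\noyau_{m,c}\|_F^2$ is forced to take two incompatible values, and conclude $\KbbO_d=\emptyset$.

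First I would evaluate $\|\Kk\|_F^2$ in two ways. On one hand, $\|\Kk\|_F^2=\mathrm{tr}(\Kk^T\Kk)=CN^d$. On the other hand, in an unstrided 'valid' convolution every output spatial position uses each kernel coefficient exactly once (with no wrap-around, distinct output positions contribute to distinct matrix entries), so each $\noyau_{m,c,q}$ appears exactly $(N-k+1)^d$ times as an entry of $\Kk$; hence $\|\Kk\|_F^2=(N-k+1)^d\sum_{m,c}\|\noyau_{m,c}\|_F^2$. Equating the two expressions gives $\sum_{m,c}\|\noyau_{m,c}\|_F^2 = \frac{CN^d}{(N-k+1)^d}$.

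Next comes the boundary argument, which is where the 'valid' condition bites. Each of the $2^d$ corners of the input domain is a pixel that enters the convolution through a single output pixel and a single corner kernel index $q$. Writing $e$ for the canonical basis vector supported on such a corner pixel of input channel $c$, the isometry $\|\Kk e\|=\|e\|=1$ reads $\sum_m \noyau_{m,c,q}^2=1$; summing over $c$ gives $\sum_{m,c}\noyau_{m,c,q}^2 = C$ for each of the $2^d$ corner indices $q$. Since $k\geq 2$ these corner indices are pairwise distinct, so summing the $2^d$ equalities yields the lower bound $\sum_{m,c}\|\noyau_{m,c}\|_F^2 \geq 2^d\,C$.

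Combining the identity and the bound gives $\frac{CN^d}{(N-k+1)^d}\geq 2^d C$, that is $\frac{N}{N-k+1}\geq 2$, equivalently $N\leq 2k-2$, which contradicts the hypothesis $N\geq 2k-1$; hence no orthogonal layer exists. I expect the main obstacle to be the careful boundary bookkeeping in the third step, namely rigorously matching each corner input pixel to a unique output pixel and a unique corner kernel index, which is cleanest to carry out uniformly in $d$. Note also that the argument requires $k\geq 2$ so that the $2^d$ corner kernel indices are genuinely distinct; for $k=1$ the 'valid' layer is pointwise and does admit orthogonal instances, so that degenerate case must be excluded.
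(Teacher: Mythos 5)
Your proof is correct, and it takes a genuinely different route from the paper's. The paper argues purely locally (in 1D, with the 2D case left as "similar"): writing $a^{(m)}=\noyau_{m,1}$, it uses only three columns of the first input-channel block of $\Kk$ --- the first and last columns, which meet only the corner taps $a^{(m)}_0$ and $a^{(m)}_{2r}$, and the column of index $2r=k-1$, which, precisely because $N\geq 2k-1$, meets all $k$ taps of every kernel. Unit column norms then give $\sum_{m}\bigl(a^{(m)}_0\bigr)^2=1$, $\sum_{m}\bigl(a^{(m)}_{2r}\bigr)^2=1$ and $\sum_{i=0}^{2r}\sum_{m}\bigl(a^{(m)}_i\bigr)^2=1$, an immediate contradiction since the third sum contains the first two as disjoint sub-sums. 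You keep the corner-column step (summed over channels, and over all $2^d$ corners) but replace the interior-column step by a global counting argument: $\mathrm{tr}(\Kk^T\Kk)=CN^d$ against the exact multiplicity $(N-k+1)^d$ of each kernel coefficient among the entries of $\Kk$, with the hypothesis $N\geq 2k-1$ consumed only in the final arithmetic $N/(N-k+1)<2$. Each route buys something: the paper's is shorter and needs no multiplicity bookkeeping; yours treats $d=1,2$ uniformly, exhibits $N=2k-1$ as the exact break-even point between the average column-norm constraint $CN^d/(N-k+1)^d$ and the corner requirement $2^dC$, and explicitly isolates the $k=1$ exception, where $1\times 1$ 'valid' convolutions built from a matrix with orthonormal columns are orthogonal. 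That caveat is real and applies to the proposition as stated, not just to your argument: the paper's own proof silently fails for $r=0$ (its three equations coincide and yield no contradiction), so the statement implicitly requires $k\geq 2$, i.e. $k\geq 3$ given the paper's standing assumption that $k$ is odd.
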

This proposition holds in the 1D and 2D cases.
We give its proof only in the 1D case in Appendix \ref{Proof of proposition valid}.

Before stating the next proposition, we remind that to compute a convolution with the zero-padding 'same' boundary conditions, we first extend each input channel with zeros and then compute the convolutions such that each output channel has the same support as the input channels, before padding/extension. A formal definition of the zero-padding 'same' boundary condition is at the beginning of Appendix \ref{Proof of Proposition same}. 

Let ${(e_{i,j})}_{i=0..k-1,j=0..k-1}$ be the canonical basis of $\RR^{k \times k}$. For the zero-padding 'same', we have the following proposition.
\begin{proposition}\label{same}
    Let $N \geq k$.
    For $\noyau \in \RR^{M \times C \times k \times k}$, with the zero-padding 'same' and $S=1$, both in the RO case and CO case, if $\Kk$ is orthogonal then there exist $(\alpha_{m,c})_{m=1..M,c=1..C} \in \RR^{M \times C}$ such that for all $(m,c) \in \llbracket 1,M \rrbracket \times \llbracket 1,C \rrbracket$, $\noyau_{m,c} = \alpha_{m,c} e_{r,r}$, where $r$ satisfies $k=2r+1$. 
    As a consequence
    \begin{align*}
        \Kk = \left(
\begin{array}{c c c}
  \alpha_{1,1}Id_{N^2} & \ldots & \alpha_{1,C}Id_{N^2} \\
  \vdots & \vdots & \vdots \\
  \alpha_{M,1}Id_{N^2} &\ldots & \alpha_{M,C}Id_{N^2}
\end{array}
\right) \in \mathbb{R}^{MN^2 \times CN^2} \;.
    \end{align*}
\end{proposition}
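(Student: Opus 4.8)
The plan is to read off, from the diagonal entries of the diagonal blocks of the orthogonality relation, a family of scalar equalities that force every kernel coefficient except the central one to vanish. The only structural input I need is the support of each \emph{row} (in the RO case) or \emph{column} (in the CO case) of the 'same' zero-padding convolution matrix $\mathcal{M}(\noyau_{m,c})$.

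First I would make the padded convolution explicit. With kernel size $k=2r+1$, stride $S=1$ and zero-padding 'same', the output at spatial position $(i_1,i_2)\in\llbracket 0,N-1\rrbracket^2$ is $\sum_{j_1',j_2'=0}^{k-1}(\noyau_{m,c})_{j_1',j_2'}\,\bar{x}_{i_1+j_1'-r,\,i_2+j_2'-r}$, where $\bar{x}$ is $x$ extended by zeros. Hence the matrix row of $\mathcal{M}(\noyau_{m,c})$ indexed by $(i_1,i_2)$ carries exactly the coefficients $(\noyau_{m,c})_{j_1',j_2'}$ for which $(i_1+j_1'-r,\,i_2+j_2'-r)\in\llbracket 0,N-1\rrbracket^2$. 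The key combinatorial observation is: for an \emph{interior} position (both $i_1,i_2\in\llbracket r,N-1-r\rrbracket$) all $k^2$ coefficients are active, whereas at each of the four corners only one quadrant is active — at $(0,0)$ only $j_1'\geq r,\ j_2'\geq r$; at $(0,N-1)$ only $j_1'\geq r,\ j_2'\leq r$; at $(N-1,0)$ only $j_1'\leq r,\ j_2'\geq r$; at $(N-1,N-1)$ only $j_1'\leq r,\ j_2'\leq r$. The hypothesis $N\geq k$ is exactly what guarantees that an interior position exists (e.g.\ $(r,r)$) and that the four corners are distinct.

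Next I would exploit orthogonality through diagonal blocks only. In the RO case, $\Kk\Kk^T=\Id_{MN^2}$ forces, for each $m$, the diagonal block $\sum_{c=1}^C \mathcal{M}(\noyau_{m,c})\mathcal{M}(\noyau_{m,c})^T=\Id_{N^2}$. Reading the diagonal entry at an interior position gives $\sum_c\sum_{j_1',j_2'}(\noyau_{m,c})_{j_1',j_2'}^2=1$, while the diagonal entry at corner $(0,0)$ gives $\sum_c\sum_{j_1'\geq r,\,j_2'\geq r}(\noyau_{m,c})_{j_1',j_2'}^2=1$. Subtracting the two (both are diagonal entries of $\Id_{N^2}$, hence both equal $1$) yields a sum of squares equal to $0$, so $(\noyau_{m,c})_{j_1',j_2'}=0$ for every $c$ and every $(j_1',j_2')$ outside that quadrant. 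Doing this for all four corners and intersecting the four surviving index sets leaves only $j_1'=j_2'=r$, so $\noyau_{m,c}=\alpha_{m,c}\,e_{r,r}$ with $\alpha_{m,c}:=(\noyau_{m,c})_{r,r}$. The CO case is identical after replacing $\Kk\Kk^T$ by $\Kk^T\Kk$ and rows by columns: the diagonal block is $\sum_{m}\mathcal{M}(\noyau_{m,c})^T\mathcal{M}(\noyau_{m,c})=\Id_{N^2}$, and the column supports at the corners realize the same four quadrant conditions (equivalently, one checks that $\mathcal{M}(h)^T$ is the 'same' zero-padding convolution matrix of the kernel flipped about its centre, a flip that fixes the index $(r,r)$, so the CO case reduces to the RO case).

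Finally, since a kernel supported only at its centre satisfies $\mathcal{M}(\alpha_{m,c}e_{r,r})=\alpha_{m,c}\Id_{N^2}$ (the convolution then acts as multiplication by $\alpha_{m,c}$), substituting into the block expression of $\Kk$ gives the announced form. I expect the only delicate point to be bookkeeping the active index set of each boundary row/column and verifying that it is precisely the stated quadrant; this is where the parity $k=2r+1$ and the bound $N\geq k$ enter. Everything else is the elementary step that a vanishing sum of squares has all terms zero, applied corner by corner.
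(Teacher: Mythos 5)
Your proof is correct and follows essentially the same route as the paper's: both read off the diagonal entries (row/column norms) of the orthogonality relation at boundary positions versus an interior position, subtract the resulting norm-one equations, and conclude from the vanishing sums of squares that every off-center kernel coefficient is zero. The only difference is that you carry out the argument directly in 2D using the four corner positions, whereas the paper writes it in 1D (first row, last row, and the interior row of index $r$) and notes that the 2D case is analogous --- your version is precisely that analogue.
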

This proposition holds in the 1D and 2D cases.
We give its proof only in the 1D case in Appendix \ref{Proof of Proposition same}.\\

To recapitulate, the results state that with padding 'valid', no orthogonal convolution can be built in the CO case and that for zero-padding 'same', the orthogonal convolution layers are trivial transformations.

Note that, the propositions do not exclude the existence of sufficiently expressive sets of 'approximately orthogonal' convolutional layers with these boundary conditions. A strategy based on soft-orthogonality may still enjoy most of the benefits of orthogonality for these boundary conditions.

\subsection{Frobenius Norm Stability}\label{sec:frobenius norm}

We recall that the motivation behind this is the following :
The authors of \citet{wang2020orthogonal,qi2020deep} argue that $L_{orth}(\noyau) = 0$ is equivalent to $\Kk$ being orthogonal. However, they do not provide stability guarantees. Without this guarantee, it could happen that $L_{orth}(\noyau) = 10^{-9}$ and $\|{\mathcal K}{\mathcal K}^T - Id \|_F = 10^{9}$. This would make the regularization with $L_{orth}$ useless, unless the algorithm reaches $L_{orth}(\noyau) = 0$.

The following theorem proves that this
cannot occur.
Therefore, if $L_{orth}(\noyau)$ is small, $\ERR{F}{N}(\noyau)$ is small at least for moderate signal sizes. Also, a corollary is that adding $L_{orth}$ as a penalty regularization is equivalent to adding the Frobenius norm of the orthogonality residual.

\begin{theorem}\label{prop norme Frobenius}
    Let $N$ be such that $SN \geq 2k-1 $ and $d=1$ or $2$. For a convolutional layer defined using circular boundary conditions, we have, both in the RO case and CO case,
        \[(\ERR{F}{N}(\Kbf))^2 = N^d L_{orth}(\noyau) \;,
        \]
        where $\ERR{F}{N}(\Kbf)$ is defined in \eqref{Frobenius_ortho_error}.
\end{theorem}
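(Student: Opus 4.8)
The plan is to relate the two quantities $(\ERR{F}{N}(\Kbf))^2$ and $N^d L_{orth}(\noyau)$ by explicitly computing the entries of the orthogonality residual in terms of the kernel correlations $\CONV(\noyau,\noyau,\text{padding zero}=P,\text{stride}=S)$, and then counting how each of these entries is repeated across the (large) matrix $\Kk\Kk^T$ (RO case) or $\Kk^T\Kk$ (CO case). The key structural fact is that, because we use circular boundary conditions, the block $\mathcal{M}(\noyau_{m,c})$ is (up to the strided subsampling) a circulant-type matrix, so the matrix products that build the residual have entries that are translation-invariant. This invariance is precisely what produces the factor $N^d$: each distinct correlation value appears $N^d$ times in the Frobenius sum.

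First I would work in the RO case and compute a single block $(\Kk\Kk^T)_{m,l} = \sum_{c=1}^C \mathcal{M}(\noyau_{m,c})\mathcal{M}(\noyau_{l,c})^T$ for $m,l\in\llbracket1,M\rrbracket$. I would identify the entries of this block with the strided autocorrelation/cross-correlation sums that define $\CONV(\noyau,\noyau,\dots)_{m,l,:,:}$; the condition $SN\geq 2k-1$ is what guarantees that the circular wrap-around does not cause two distinct taps of the correlation to collide, so that the correlation tensor of size $(2P/S+1)$ in each spatial dimension is faithfully read off from $\mathcal{M}(\noyau_{m,c})\mathcal{M}(\noyau_{l,c})^T$ without aliasing. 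Subtracting the identity corresponds exactly to subtracting $\Iro$ from the correlation tensor at the central entry. Then, writing the Frobenius norm as a sum over all matrix entries and using the circulant/translation-invariant structure, every nonzero spatial correlation value is repeated exactly $N^d$ times (once per spatial position of the $N\times N$, or $N$, output grid). Collecting terms gives $(\ERR{F}{N}(\Kbf))^2 = N^d \|\CONV(\noyau,\noyau,\dots)-\Iro\|_F^2 = N^d L_{orth}(\noyau)$ in the RO case.

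For the CO case I would instead compute $\Kk^T\Kk$ and compare $\|\Kk^T\Kk-\Id_{CS^dN^2}\|_F^2$ with $\|\Kk\Kk^T-\Id_{MN^2}\|_F^2$. Here I would use the algebraic identity noted in Section \ref{sec:lorth}: for any matrix $A\in\RR^{a\times b}$, one has $\|A^TA-\Id_b\|_F^2 - \|AA^T-\Id_a\|_F^2 = b-a$. Applying this to $A=\Kk$ with $a=MN^d$ and $b=CS^dN^d$ gives $(\ERR{F}{N}(\Kbf))^2 = \|\Kk\Kk^T-\Id_{MN^d}\|_F^2 + (CS^d-M)N^d$, and the first term equals $N^d\|\CONV(\noyau,\noyau,\dots)-\Iro\|_F^2$ by the RO computation. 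The $-(M-CS^d)$ correction baked into the CO definition of $L_{orth}$ in Definition \ref{Lorth-def} is exactly the term $(CS^d-M)N^d$ after factoring out $N^d$, so the two sides again match.

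The main obstacle I anticipate is the first step: carefully verifying that the entries of $\sum_c \mathcal{M}(\noyau_{m,c})\mathcal{M}(\noyau_{l,c})^T$ coincide with the defined strided correlation $\CONV(\noyau,\noyau,\text{padding zero}=P,\text{stride}=S)_{m,l,:,:}$, including getting the stride subsampling, the padding $P=\lfloor(k-1)/S\rfloor S$, and the no-aliasing condition $SN\geq 2k-1$ all consistent with the conventions fixed in Section \ref{sec:lorth}. This is a bookkeeping computation about the circulant block structure of $\mathcal{M}(\cdot)$ (spelled out in Appendix \ref{conv as matrix vec prod}), and once it is pinned down, the counting argument giving the factor $N^d$ and the reduction of the CO case to the RO case are routine.
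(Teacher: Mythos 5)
Your proposal is correct and takes essentially the same route as the paper's proof: the paper likewise computes the blocks $\sum_{c}\mathcal{M}(\noyau_{m,c})\mathcal{M}(\noyau_{l,c})^T$ of $\Kk\Kk^T$, identifies them (via circulant-matrix lemmas, with $SN\geq 2k-1$ ruling out wrap-around collisions) with circulant embeddings of the strided correlations minus $\Iro$, and extracts the factor $N^d$ from $\|C(x)\|_F^2=N\|x\|_2^2$. Your CO-case reduction is also the paper's: it invokes exactly the identity $\|\Kk^T\Kk-\Id\|_F^2=\|\Kk\Kk^T-\Id\|_F^2+(CS^d-M)N^d$ (Lemma 1 of \citet{wang2020orthogonal}) together with the fact that the RO computation is valid for any architecture, so that the $-(M-CS^d)$ term in Definition \ref{Lorth-def} absorbs the correction.
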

Theorem \ref{prop norme Frobenius} is proved, in Appendix \ref{proof prop norme Frobenius}.
We remind that $L_{orth}(\Kbf)$ is independent of $N$.
The theorem formalizes for circular boundary conditions and for both the CO case and the RO case, the reasoning leading to the regularization with $L_{orth}$ in \citet{wang2020orthogonal}.

Using Theorem \ref{prop norme Frobenius}, we find that \eqref{model_regularise} becomes
\[L_{task} + \sum_{l} \frac{\lambda}{N_l^d}  (\ERR{F}{N_l}(\noyau_l))^2.
\]
Once the parameter $\lambda$  is made dependent on the input size of layer $l$, the regularization term $\lambda L_{orth}$ is equal to the Frobenius norm of the orthogonality residual. This justifies the use of $L_{orth}$ as a regularizer.

We can also see from Theorem \ref{prop norme Frobenius} that, for both the RO case and the CO case, when $L_{orth}(\noyau) =0$, $\Kcal$ is orthogonal, independently of $N$.
This recovers the result stated in \citet{qi2020deep} for $S=1$, and the result stated in \citet{wang2020orthogonal} in the RO case for any $S$ .

Considering another signal size $N'$ and applying Theorem \ref{prop norme Frobenius} with the sizes $N$ and $N'$, we find
\[(\ERR{F}{N'}(\noyau))^2 = \frac{(N')^d}{N^d}(\ERR{F}{N}(\noyau))^2.
\]
To the best of our knowledge, this equality is new. This could be of importance in situations when $N$ varies. For instance when the neural network is learned on a data set containing signals/images of a given size, but the inference is done for signals/images of varying size \citep{ren2015faster,Shelhamer2017FCN,ji2019invariant}.

Finally, using \eqref{norm_equiv} and Proposition \ref{lemme lipchitz norme spectrale}, $\Kcal$ is $\epsilon$-AIP with $\epsilon$ scaling like the square root of the signal/image size. This might not be satisfactory. We exhibit in the next section a tighter bound on $\epsilon$, independent of the input size $N$.


\subsection{Spectral Norm Stability and Scalability}\label{sec:spectral norm}

We prove in Theorem \ref{Prop norme spectrale}  that $\ERR{s}{N}(\noyau)^2$ is sandwiched between two quantities proportional to $L_{orth}(\noyau)$. The multiplicative factors do not depend on $N$. Hence, when $L_{orth}(\noyau)$ is small, $\ERR{s}{N}(\noyau)^2$ is also small for all $N$. As a consequence, as long as  $L_{orth}(\noyau) \ll 1$ even if the algorithm does not reach $L_{orth}(\noyau)=0$, regularizing with $L_{orth}(\noyau)$ permits to construct nearly orthogonal and isometric convolutional layers independently of $N$.

Moreover, combined with Proposition \ref{lemme lipchitz norme spectrale} this ensures that, if $L_{orth}(\noyau)$ is small, $\Kk$ is $\varepsilon$-AIP with $\varepsilon$ small. Using Table \ref{ztzrt}, we see that this property leads to more robustness and avoids gradient vanishing/exploding. This is in line with the empirical results observed in \citet{wang2020orthogonal,qi2020deep}.

\begin{theorem}\label{Prop norme spectrale}
   Let $N$ be such that $SN \geq 2k-1 $ and $d=1$ or $2$. For a convolutional layer defined using circular boundary conditions, we have
        \[\alpha' ~L_{orth}(\noyau) ~ \leq ~(\ERR{s}{N}(\Kbf))^2 ~\leq ~\alpha ~L_{orth}(\noyau)
        \]
        where $\ERR{s}{N}(\Kbf)$ is defined in \eqref{spectral_ortho_error}, for $\alpha' = \frac{1}{\min(M,CS^2)}$ and
        \[ \alpha = \left\{\begin{array}{ll}
         \left(2\left\lfloor \frac{k-1}{S} \right\rfloor+1\right)^dM & \mbox{in the RO case }(M \leq CS^d), \\
         (2k-1)^dC   & \mbox{in the CO case }(M \geq CS^d).
        \end{array}\right.
        \]

\end{theorem}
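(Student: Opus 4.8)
The plan is to exploit the fact that circular convolution diagonalizes in the Fourier basis, reducing the spectral norm of the orthogonality residual to a supremum over frequencies of the spectral norm of small matrices. First I would recall from the proof structure of Theorem \ref{prop norme Frobenius} (and Appendix \ref{conv as matrix vec prod}) that, with circular boundary conditions, each block $\mathcal{M}(\noyau_{i,j})$ is a (block-)circulant matrix; after applying the $2$-dimensional discrete Fourier transform, $\Kk$ becomes block-diagonal, with one block $\widehat{\Kk}(\xi)$ per frequency $\xi$ in a grid of $N^d$ points. The residual $\Kk\Kk^T-\Id$ (RO case) or $\Kk^T\Kk-\Id$ (CO case) is then unitarily equivalent to a block-diagonal matrix whose blocks are $\widehat{\Kk}(\xi)\widehat{\Kk}(\xi)^*-\Id$ (resp. $\widehat{\Kk}(\xi)^*\widehat{\Kk}(\xi)-\Id$). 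Since the spectral norm of a block-diagonal matrix is the maximum over its blocks, I get
\[
\ERR{s}{N}(\Kbf) = \max_{\xi} \bigl\| \widehat{\Kk}(\xi)\widehat{\Kk}(\xi)^* - \Id \bigr\|_2
\]
in the RO case, and analogously in the CO case.

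Next I would identify each per-frequency matrix $\widehat{\Kk}(\xi)$ explicitly as a trigonometric polynomial in $\xi$ with coefficients built from the kernel entries $\noyau_{i,j,:,:}$; this is where the stride $S$ enters, folding the kernel support. The key observation is that the \emph{same} Fourier diagonalization applied to the autocorrelation tensor $\CONV(\noyau,\noyau,\text{padding zero}=P,\text{stride}=S)$ shows that its entries are precisely the inverse-transform coefficients of the functions $\xi \mapsto [\widehat{\Kk}(\xi)\widehat{\Kk}(\xi)^*]_{m,l}$. Consequently $L_{orth}(\noyau)$ equals an averaged/summed squared deviation of these per-frequency blocks from the identity over a finite frequency grid (this is essentially the content already used to prove Theorem \ref{prop norme Frobenius}, $(\ERR{F}{N}(\Kbf))^2=N^dL_{orth}(\noyau)$). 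The upper bound $(\ERR{s}{N}(\Kbf))^2 \leq \alpha L_{orth}(\noyau)$ then follows by bounding a single worst-frequency spectral norm by the full sum of squared entries, i.e. by a Frobenius norm over the relevant $M\times M$ (RO) or $C\times C$-type blocks of the autocorrelation tensor, and counting how many nonzero padded positions contribute — this count is exactly $\left(2\left\lfloor\frac{k-1}{S}\right\rfloor+1\right)^d$ in the RO case and $(2k-1)^d$ in the CO case, explaining the stated combinatorial factors together with the dimension $M$ or $C$. The lower bound $\alpha' L_{orth}(\noyau)\leq(\ERR{s}{N}(\Kbf))^2$ comes from the generic norm inequality \eqref{norm_equiv}: since $(\ERR{F}{N}(\Kbf))^2=N^dL_{orth}(\noyau)$ and the residual lives in $\RR^{a\times a}$ with $a=\min(M,CS^2)N^d$, one has $\|A\|_F\leq\sqrt{a}\,\|A\|_2$, giving $N^dL_{orth}(\noyau)\leq \min(M,CS^2)N^d (\ERR{s}{N}(\Kbf))^2$, and the $N^d$ cancels to yield $\alpha'=\frac{1}{\min(M,CS^2)}$.

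The main obstacle will be making the per-frequency reduction fully rigorous in the CO case with stride $S\geq 2$, where, unlike the RO case, there is no clean identity \eqref{eq RO conv=Ir0} relating $\CONV$ directly to $\Iro$, and where the correction term $-(M-CS^2)$ in the definition of $L_{orth}$ must be tracked through the Fourier computation. Concretely, I expect the delicate part to be verifying that summing the squared deviations of the per-frequency blocks over the grid reproduces $L_{orth}(\noyau)$ \emph{with} the correct constant offset, and that the aliasing induced by the stride (subsampling in space corresponds to summation of shifted frequencies) is correctly matched by the $\lfloor 2P/S\rfloor+1$ structure of $\CONV(\cdot,\cdot,\text{stride}=S)$; the condition $SN\geq 2k-1$ is exactly what guarantees no spurious overlap so that the finite-grid sums faithfully capture all autocorrelation coefficients. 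Once the dictionary between spatial autocorrelation coefficients and per-frequency block entries is established, both bounds reduce to the elementary matrix-norm inequalities $\|B\|_2^2\leq\|B\|_F^2\leq(\text{number of entries})\cdot\|B\|_2^2$ applied blockwise, and the counting of contributing padded positions.
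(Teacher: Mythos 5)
Your lower bound is exactly the paper's argument (norm equivalence \eqref{norm_equiv} plus Theorem \ref{prop norme Frobenius}, with the $N^d$ cancelling), and your RO-case upper bound is a valid alternative route: by Lemma \ref{simplif Kk Kk^T - Id_MN} the residual $\Kk\Kk^T-\Id_{MN^d}$ is block-circulant, the DFT reduces $\ERR{s}{N}(\Kbf)$ to $\max_{\xi}\|X(\xi)\|_2$ where $X(\xi)_{m,l}$ is the DFT of the autocorrelation residual $x_{m,l}$, and the entrywise bound $|X(\xi)_{m,l}|\leq\|x_{m,l}\|_1\leq\left(2\left\lfloor\frac{k-1}{S}\right\rfloor+1\right)^{d/2}\|x_{m,l}\|_2$ gives $(\ERR{s}{N}(\Kbf))^2\leq\left(2\left\lfloor\frac{k-1}{S}\right\rfloor+1\right)^d L_{orth}(\noyau)$ — in fact sharper than the paper's constant, since the paper works in the spatial domain with $\|B\|_2^2\leq\|B\|_1\|B\|_\infty$ and pays an extra factor $M$ through Cauchy--Schwarz.

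The genuine gap is the CO case. You propose to bound the worst-frequency spectral norm by "a Frobenius norm over the relevant $C\times C$-type blocks of the autocorrelation tensor", but no such object exists: $\CONV(\noyau,\noyau,\cdot)$ is indexed by \emph{output}-channel pairs, and in the CO case $\|\CONV(\noyau,\noyau,\cdot)-\Iro\|_F^2=L_{orth}(\noyau)+(M-CS^d)$. If you run the RO-type counting bound on the $M\times M$ per-frequency blocks $\widehat{\Kk}(\xi)\widehat{\Kk}(\xi)^*-\Id_M$ and pass to $\widehat{\Kk}(\xi)^*\widehat{\Kk}(\xi)-\Id_{CS^d}$, you obtain at best a bound of the form $\left(2\left\lfloor\frac{k-1}{S}\right\rfloor+1\right)^d\bigl(L_{orth}(\noyau)+(M-CS^d)\bigr)$: the additive offset is multiplied by the counting factor and does \emph{not} cancel, so the bound is vacuous exactly in the interesting regime $L_{orth}(\noyau)\to 0$ with $M>CS^d$ (per frequency, $\|\widehat{\Kk}(\xi)\widehat{\Kk}(\xi)^*-\Id_M\|_F^2\to M-CS^d$, while the counting bound can only certify a multiple of $M-CS^d$). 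The deeper reason is that, with stride, the entries of $\widehat{\Kk}(\xi)^*\widehat{\Kk}(\xi)$ couple distinct aliased frequencies $\xi+sN$ and $\xi+s'N$, hence are \emph{not} DFT values of the autocorrelation tensor; they are governed by polyphase (coset) cross-correlations of the kernels across \emph{input} channels. The paper's proof constructs precisely these objects (Lemma \ref{Ch^T S_n^T S_n Cg}: the columns of $C(P_{SN}(h))^TS_N^TS_NC(P_{SN}(g))$ are shifts of $S$ vectors of length $2k-1$ independent of $N$) and then — this is the key step missing from your plan — identifies the total squared mass of these polyphase vectors with $L_{orth}(\noyau)$ \emph{exactly}, offset included, by applying Theorem \ref{prop norme Frobenius} at the minimal signal size $N'=2k-1$. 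Without an identity of this kind, the "elementary norm inequality plus counting" step cannot produce the stated constant $(2k-1)^dC$.
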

Theorem \ref{Prop norme spectrale} is proved, in Appendix \ref{proof Prop norme spectrale}.
When $M=CS^d$, the two inequalities hold and it is possible to take the minimum of the two $\alpha$ values.

As we can see from Theorem \ref{Prop norme spectrale}, unlike with the Frobenius norm, the spectral norm of the orthogonality residual is upper-bounded by a quantity that does not depend on $N$. The lower-bound of Theorem \ref{Prop norme spectrale}  guarantees that the upper-bound is tight up to a multiplicative constant. However, we cannot expect much improvement in this regard since the multiplicative constant $\sqrt{\alpha}$ is usually moderately large\footnote{For usual architectures, $\sqrt{\alpha}$ is always smaller than $200$.}. For instance, with $(M,C,k,S) = (128,128,3,2)$, for images, $\sqrt{\alpha} \leq 34$.   
If as is common in practice the optimization algorithm reaches $L_{orth}(\Kbf) \leq 10^{-6}$, Theorem \ref{Prop norme spectrale} guarantees that, independently of $N$, \[\ERR{s}{N}(\Kbf)\leq \sqrt{\alpha L_{orth}(\noyau)}\leq 0.034.\]
Using Proposition \ref{lemme lipchitz norme spectrale}, independently of $N$, a convolutional layer defined with $\noyau$ is $\epsilon$-AIP, for $\epsilon \leq 0.034$, and we have, using Definition \ref{e-aip-def},
\[\left\{\begin{array}{ll}
\forall x\in\RR^{CS^2N^2}& \|\Kk x \| \leq \sqrt{1+\varepsilon}  \|x\| \leq 1.017 \|x\| \\
\forall y\in\RR^{MN^2}& 0.982\|y\|\leq \sqrt{1-\varepsilon}  \|y\|  \leq \|\Kk^T y \| \leq 1.017 \|y\|
\end{array}\right.
\]
The layer benefits from the properties described in Table \ref{ztzrt}.

The development done for the above example can be repeated as soon as  $L_{orth}(\Kbf) \ll 1$, both in the RO case and CO case.
Experiments that confirm this behavior are in Section \ref{exp-sec}.

\section{Experiments}\label{exp-sec}
Before illustrating the benefits of approximate orthogonality to robustly classify images in Section \ref{sec:dataset_exp}, we conduct  several synthetic experiments in Section \ref{synth_exp_sec}. The synthetic experiments empirically evaluate the landscape of $L_{orth}$ in Section \ref{landscape-sec}, \ref{m=CS2_expe_sec} and illustrate the theorems of Section \ref{main-thm-sec}, in Section \ref{stable_expe_sec}.

Both in Section \ref{synth_exp_sec} and Section \ref{sec:dataset_exp}, to evaluate how close $\Kcal$ is to being orthogonal, we compute some of its singular values $\sigma$ for different input sizes $SN\times SN$. When $S=1$, we compute all the singular values of $\Kk$ with the Algorithm~\ref{S=1_algo},  Appendix~\ref{sec:poweriter}, from~\citet{sedghi2018singular}. For convolutions with stride, $S>1$, there is no known practical algorithm to compute all the singular values and we simply apply the well-known power iteration algorithm associated with a spectral shift, to retrieve the smallest and largest singular values $(\sigma_{min},\sigma_{max})$ of $\Kcal$  (see Algorithm~\ref{anyS_algo} in Appendix~\ref{sec:poweriter}).
We remind that $\Kcal$ orthogonal is equivalent to $\sigma_{min}=\sigma_{max}=1$.

\subsection{Synthetic Experiments}\label{synth_exp_sec}
Section \ref{landscape-sec}, \ref{m=CS2_expe_sec} and  \ref{stable_expe_sec} report on results of the massive experiment that is described below.

 In order to avoid interaction with other objectives, we train a single 2D convolutional layer with circular padding. We explore all the architectures such that $\KbbO_2\neq \emptyset$, for $C\in\llbracket1,64 \rrbracket$ , $M\in\llbracket1,64 \rrbracket$, $S\in \{1,2,4\}$, and $k\in \{1,3,5,7\}$. This leads to $44924$  architectures for which an orthogonal convolutional layer exists (among $49152$ architectures in total). 

For each architecture, the model is trained using a \textit{Glorot uniform} initializer and an \textit{Adam} optimizer \citep{Diederik14Adam} with fixed learning rate\footnote{We do not report experiments for other tested learning rates $10^{-1},10^{-3},10^{-4},10^{-5}$ because they lead to the same conclusions.} 0.01 on a null loss ($L_{task}(X,Y,\Kbf) = 0$, for all input X, target Y, and kernel tensor $\Kbf$) and the $L_{orth}(\Kbf)$ regularization (see Definition \ref{Lorth-def}) during $3000$ steps\footnote{Increasing the number of steps leads to the same conclusions.}. 

We report below implementation details that have no influence on the results, since $L_{task} = 0$. 
No data are involved in the synthetic experiments and the input of the layer contains a null input of size $(C,64,64)$. Other input sizes from $8$ to $256$ were tested but not reported,  leading to the same conclusions. Batch size (which thus has no influence on the results) is set to one.

\subsubsection{Optimization Landscape}\label{landscape-sec}

\begin{figure}[ht]
    \centering
    \includegraphics[width=1.0\linewidth]{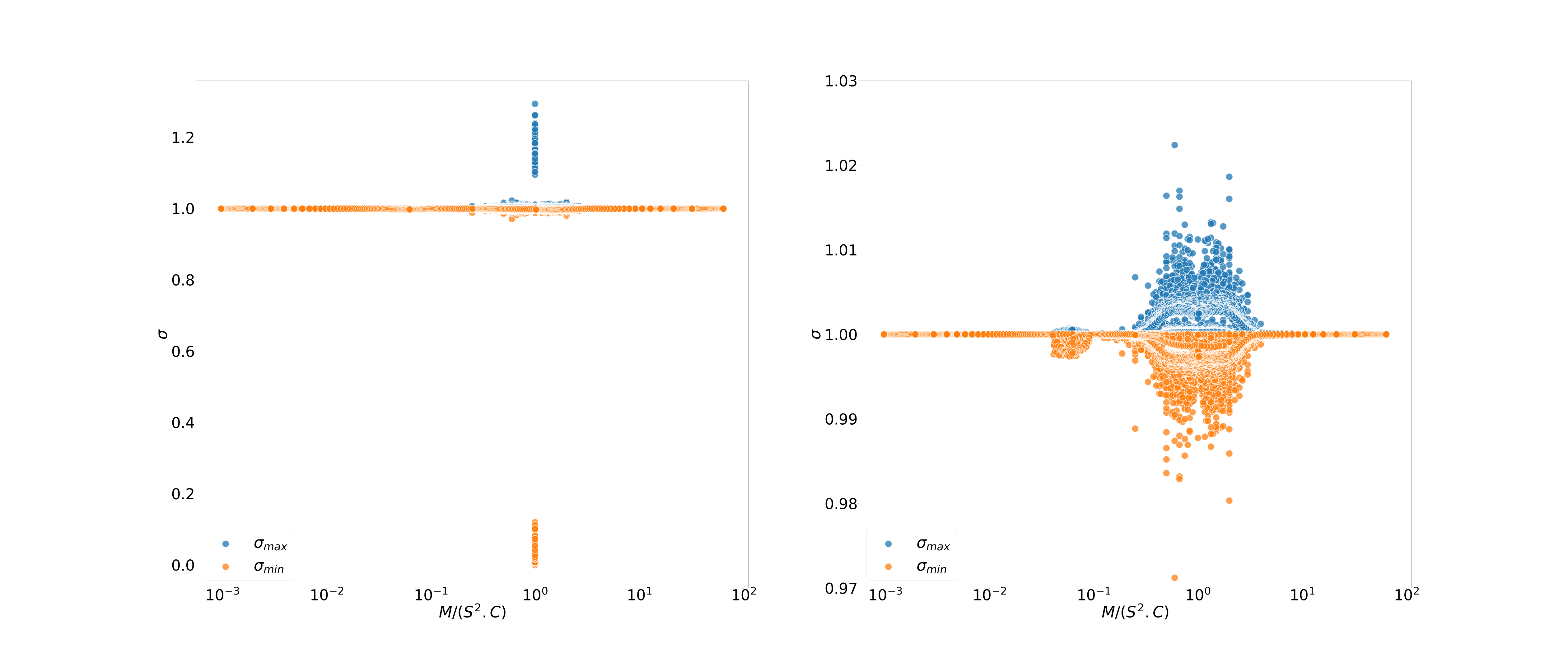}
    \caption{{\bf Optimization of $\mathbf{L_{orth}}$.} Minimization of $L_{orth}(\Kbf)$ for a kernel tensor of architecture $(M,C,k,S)$ among the $44924$ possible configurations satisfying $\KbbO_2\neq \emptyset$. For each architecture the resulting trained kernel 
    is represented by a blue dot and an orange dot corresponding respectively to its largest and smallest singular values $\sigma_{max}$ and $\sigma_{min}$. 
	The $x$-axis represents $M/CS^2$ in log scale.  (left)  All configurations; (right) All configurations for which $M\neq CS^2$. On the right final convolutions are nearly orthogonal ($\sigma_{max}=\sigma_{min}\approx 1$), but some configurations on the left (where $M=CS^2$) have $\sigma_{max}$ larger that one, and $\sigma_{min}$ close to zero.}
\label{fig:exist_and_optim}
\end{figure}

For each architecture, we plot on Figure \ref{fig:exist_and_optim} the values of $\sigma_{min}$ and $\sigma_{max}$ for the obtained $\Kcal$ and $SN\times SN=64 \times 64$. The experiment for a given architecture $(M,C,k,S)$ is represented by two points: $\sigma_{max}$, in blue, and $\sigma_{min}$, in orange.  For each point $(x,y)$ in Figure~\ref{fig:exist_and_optim}, the first coordinate $x$ corresponds to the ratio $\frac{M}{CS^2}$ of the considered architecture, and the second coordinate $y$ equals the singular value ($\sigma_{min}$ or $\sigma_{max}$) of the obtained $\Kk$. The points with $x \leq 1$ correspond to the artchitecture in the RO case ($\Kk$ is a fat matrix), and the others correspond to the architectures in the CO case ($\Kk$ is a tall matrix).
 
The right plot of Figure~\ref{fig:exist_and_optim} shows that all configurations where $M\neq CS^2$ are trained very accurately to near-perfect orthogonal convolutions. These configurations represent the vast majority of cases found in practice. However, the left plot of  Figure~\ref{fig:exist_and_optim} points out that some architectures, with $M= CS^2$, might not fully benefit of the regularization with $L_{orth}$. These architectures, corresponding to a square $\Kcal$, can mostly be found when $M=C$ and $S=1$, for instance in VGG \citep{simonyan2015deep} and Resnet \citep{he2016deep}.
We have conducted experiments that we do not report here in detail, and it seems that this is specific to the convolutional case. Fully-connected layers optimized to be orthogonal do not suffer from this phenomenon.

\subsubsection{Analysis of the \texorpdfstring{$M= CS^2$}{} Cases}\label{m=CS2_expe_sec}

Since we know that $\KbbO_2\neq \emptyset$, the explanation for the failure cases (when $\sigma_{max}$ or $\sigma_{min}$ significantly differ from $1$) is 
that the optimization was not successful.  
We tried many learning rate schemes and iteration numbers but obtained similar results\footnote{See the description of the experiments at the beginning of Section \ref{synth_exp_sec} and the related footnotes.}. 

To evaluate the proportion of successful optimizations when $M=CS^2$, we run 100 training experiments, with independent initialization, for each configuration when $M= CS^2$. In average, after convergence, we found  $\sigma_{min} \sim 1 \sim \sigma_{max} $ for $14\%$ of runs, proving that the minimizer can be reached. The explanation of this phenomenon and the evaluation of its impact on applications are open questions that we keep for future research.
A contribution of the article is to empirically identify these problematic cases.

\begin{figure}
    \centering
    \begin{minipage}{0.45\textwidth}
        \centering
        \includegraphics[width=1.0\textwidth]{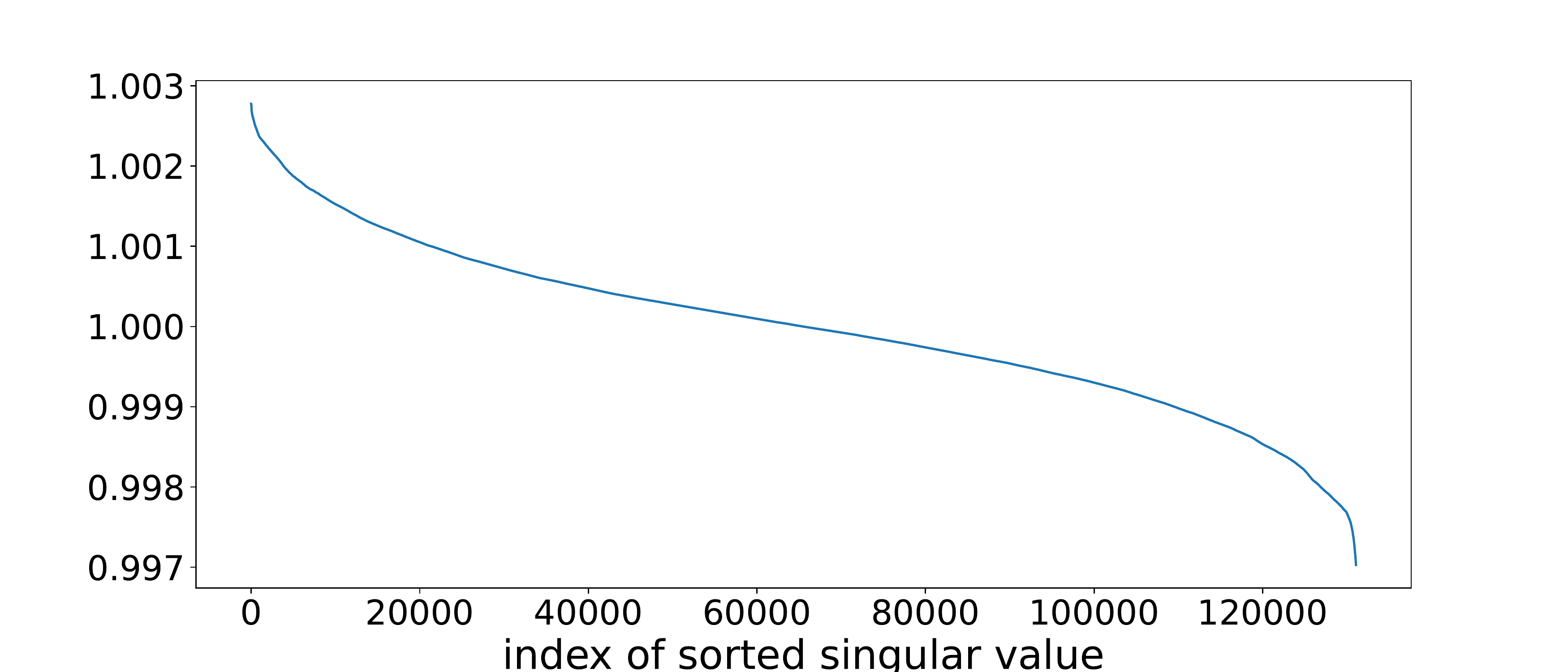} %
    \end{minipage}
    \begin{minipage}{0.45\textwidth}
        \centering
        \includegraphics[width=1.0\textwidth]{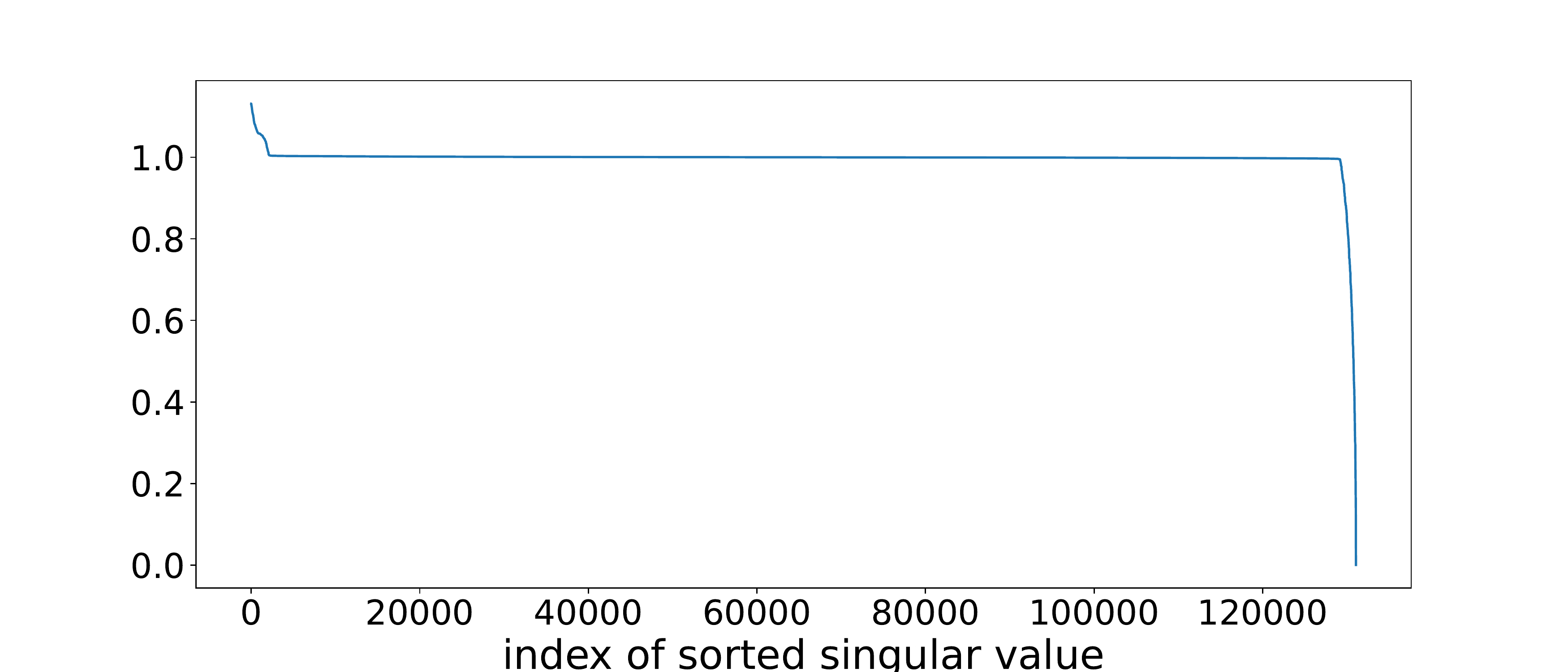} %
    \end{minipage}\hfill
    \caption{{\bf Singular values of} $\mathbf{\Kcal}$, when $C=M$ and $S=1$. Optimization is (Left)  successful, $L_{orth}(\Kbf)  \ll 1$, (Right) Unsuccessful, $L_{orth}(\Kbf) \geq 0.1$. On the left, we see that orthogonal convolutions can be reached even when $C=M$ and $S=1$. On the right we see that, even for unsuccessful optimization, most of the singular values are very close to one. 
    }

\label{fig:optim_isometries}
\end{figure}

We display on Figure \ref{fig:optim_isometries} the singular values of $\Kcal$ defined  for $S=1$ and $N\times N=64 \times 64$ for two experiments where $M=C$. In the experiment on the left, the optimization is successful and the singular values are very accurately concentrated around $1$. On the right, we see that only a few of the singular values significantly differ from $1$.

Figure \ref{fig:optim_isometries} shows that even if $\sigma_{min}$ and $\sigma_{max}$ are not close to 1, as shown in Figure \ref{fig:exist_and_optim}, most of the singular values are close to 1. This probably explains why the landscape problem does not alter the performance on real data sets in \citet{wang2020orthogonal} and \citet{qi2020deep}. Notice that \citet{wang2020orthogonal} display a curve similar to Figure \ref{fig:optim_isometries} when used for a real data set.

\subsubsection{Stability of \texorpdfstring{$(\sigma_{min},\sigma_{max})$}{} when \texorpdfstring{$N$}{} Varies} \label{stable_expe_sec}
\begin{figure}[ht]
    \centering
    
    \begin{minipage}{0.49\textwidth}
        \centering
        \includegraphics[width=1.0\textwidth]{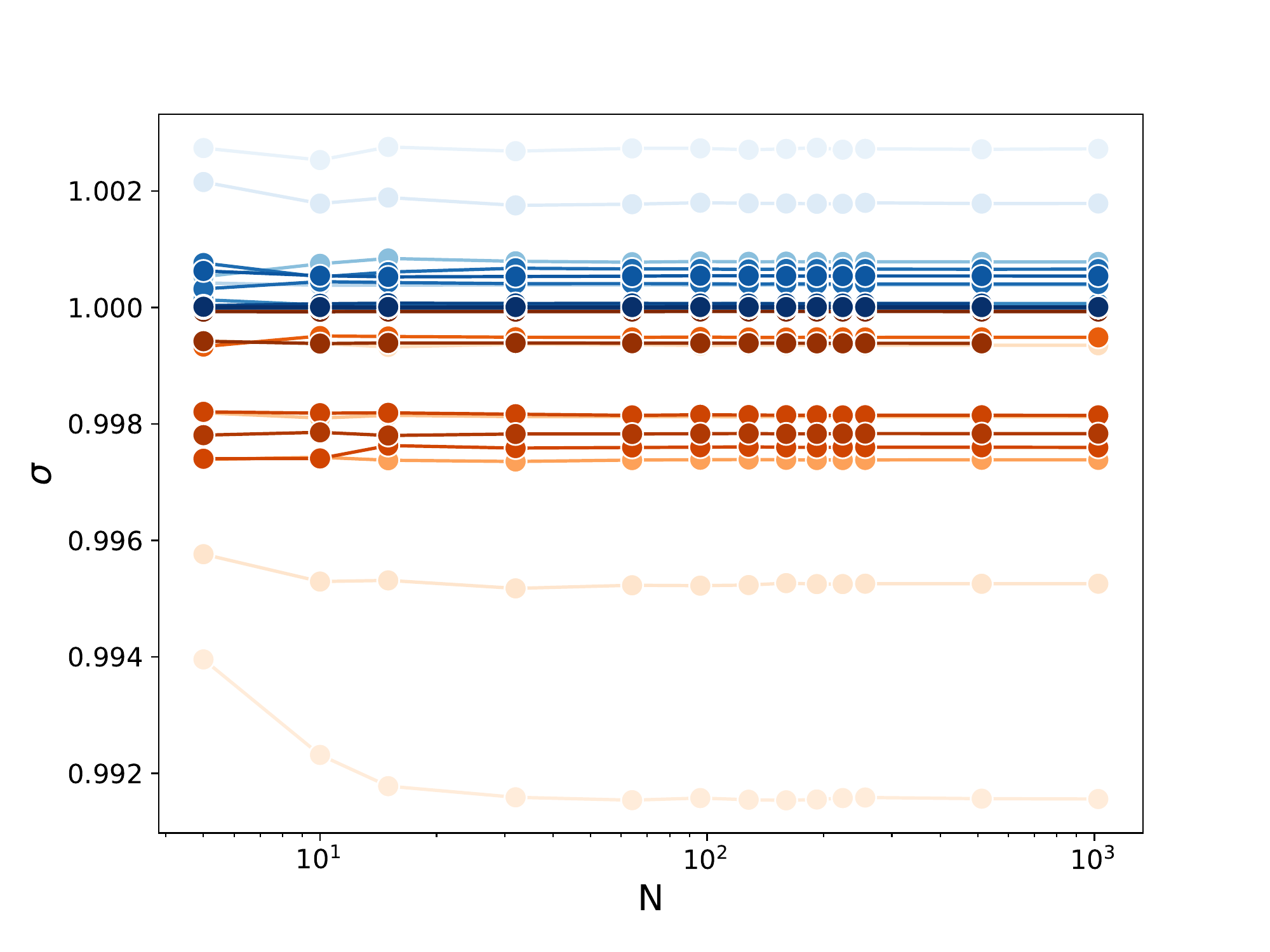} 
    \end{minipage}\hfill
    \begin{minipage}{0.49\textwidth}
        \centering
        \includegraphics[width=1.0\textwidth]{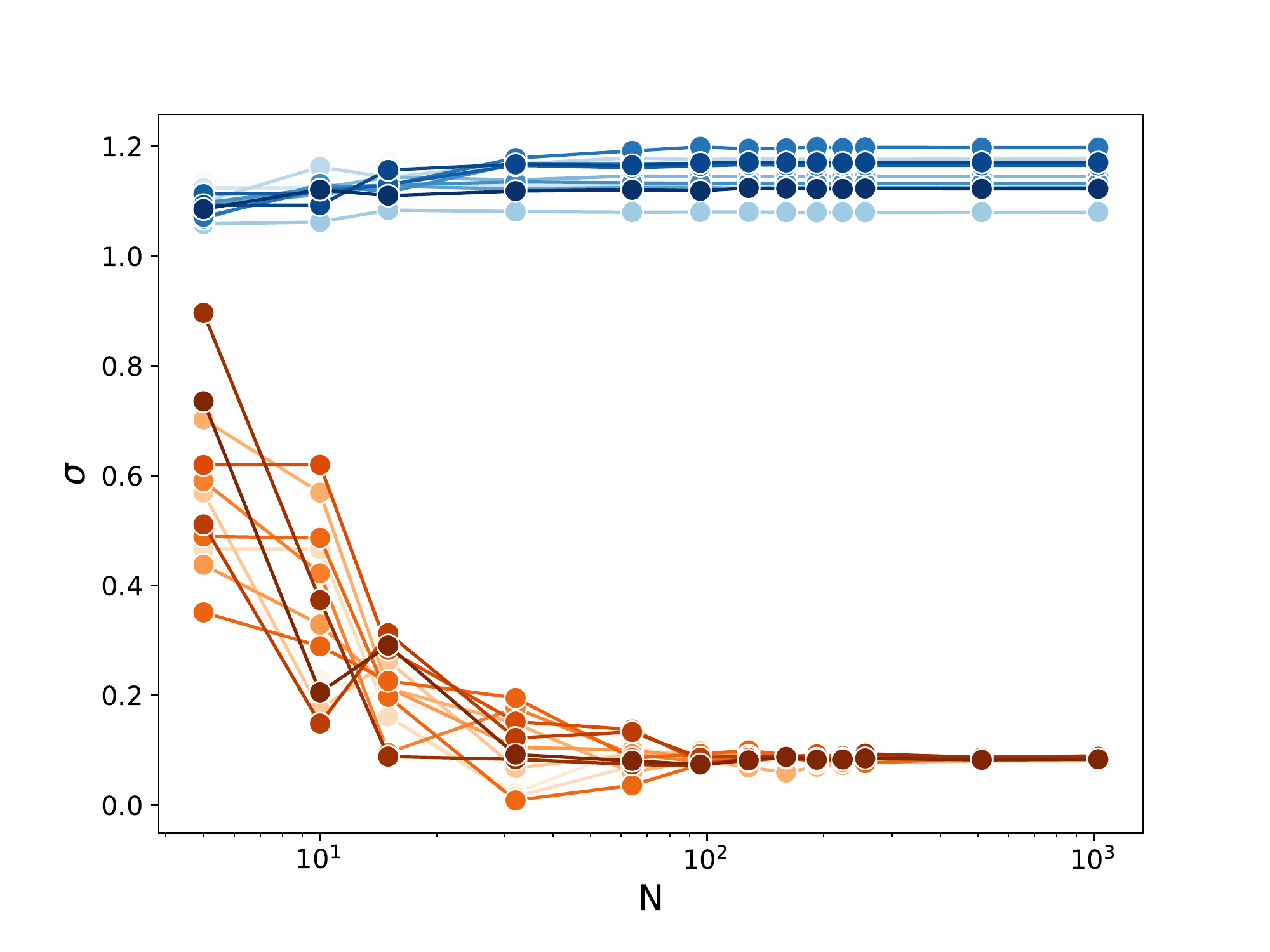} 
    \end{minipage}
    \caption{{\bf Evolution of $\mathbf{\sigma_{min}}$ and $\mathbf{\sigma_{max}}$ according to input image size} (x-axis: $N$ in log-scale). Each line (transparency) represents the singular values $\sigma_{max}$ (in blue) and $\sigma_{min}$ (in orange) of $\Kcal$ for different $N$ and a fixed $\Kbf$. (Left) $\Kbf$ is such that $L_{orth}(\Kbf)\ll 1$, singular values remain close to one whatever $N$.  (Right) $\Kbf$ is such that $L_{orth}(\Kbf)\not\ll 1$, the largest (resp. smallest) singular values  increase (resp. decrease) when N grows.}
\label{fig:out_stats_increasingN}
\end{figure}

In this experiment, we evaluate how the singular values $\sigma_{min}$ and $\sigma_{max}$ of $\Kcal$ vary when the parameter $N$ defining the size $SN\times SN$ of the  input channels varies, for $\Kbf$ fixed. This is important for applications \citep{Shelhamer2017FCN,ji2019invariant,ren2015faster} using fully convolutional networks, or for transfer learning using pre-learnt convolutional feature extractor.

To do so, we randomly select $50$ experiments for which the optimization was successful ($L_{orth}(\Kbf) \leq 0.001$) and $50$ experiments for which it was unsuccessful ($L_{orth}(\Kbf) \geq 0.02$). They are respectively used to construct the figures on the left and the right side of  Figure \ref{fig:out_stats_increasingN}. For a given $\Kbf$, we display  the singular values $\sigma_{min}$ and $\sigma_{ max}$ of $\Kcal$ for $N\in \{5,12,15,32,64,128,256,512,1024\}$, as orange and blue dots. The dots corresponding to the same $\Kbf$ are linked by a line. 

We see, on the left of  Figure \ref{fig:out_stats_increasingN}, that for successful experiments ($L_{orth} (\Kbf) \ll 1$), the singular values are very stable when $N$ varies. This corresponds to the behavior described in Theorem \ref{Prop norme spectrale} and Proposition \ref{lemme lipchitz norme spectrale}. We also point out, on the right of  Figure \ref{fig:out_stats_increasingN}, that for  unsuccessful optimization ($L_{orth} \not\ll 1$), $\sigma_{min}$ (resp. $\sigma_{max}$) values decrease (resp. increase) rapidly when N increases.
 
\subsection{Data sets Experiments}\label{sec:dataset_exp}
In this section we compare, on Cifar10 and Imagenette data sets, the performance, robustness, spectral properties and processing time of three networks: standard convolutional neural networks with unconstrained convolutions called {\it Conv2D}, the same network architectures with convolutions regularized with $L_{orth}$ and the same network architectures with convolutions constrained with a method that we call {\it Cayley}, a hard convolutional layer orthogonality method\footnote{Our experiments complement the comparison of $L_{orth}$ with kernel orthogonality methods in \citet{wang2020orthogonal}.} based on the Cayley transform \citep{trockman2021orthogonalizing}. The latter builds convolutions parameterized by $k\times k$ parameters but, because a mapping is applied to obtain orthogonality, the convolution kernels are of size $N\times N$. In comparison,  $L_{orth}$ regularization provides convolutions kernels of size $k\times k$, as is standard. The methods are therefore not expected to provide the same results which makes the comparison a bit complicated. This comparison is also somewhat unfair since the regularization with $L_{orth}$ enjoys a parameter $\lambda$. We show results for a wide range of $\lambda$ but assume, when interpreting the results, that an optimal $\lambda$ is chosen, for instance using cross-validation.

The design of the experiments aims at simultaneously obtaining good accuracy and robustness. Therefore, for the purpose of robustness, we only use isometric activations and nearly orthogonal convolutional layers. We cannot expect, with this robustness constraint, to obtain clean accuracies as good as those reported in \citet{wang2020orthogonal}.

On Cifar10, we use a VGG-like architecture~\citep{simonyan2015deep} with nine convolutional layers and a single dense output layer with ten $1$-Lipschitz neurons. In all experiments, for a fair comparison, we use invertible downsampling emulation as in~\citet{trockman2021orthogonalizing}. In order to avoid problematic configurations described in Section~\ref{m=CS2_expe_sec}, we alternate channels numbers $C,C+2,C$ within each VGG block (see Table~~\ref{tab:NN_archi_cifar10}). 

The network is trained during 400 epochs with a batch size of 128, using cross-entropy loss with temperature, Adam optimizer~\citep{Diederik14Adam} with a decreasing learning rate, and standard data augmentation. A full description of hyperparameters is given in Appendix~\ref{Cifar_appendix}. The optimized network achieves $91\%$ accuracy on the Cifar10 test set, for the $Conv2D$ classical network.

As already mentioned, three configurations are compared:  $Conv2D$: classical convolutions (i.e. no regularization), $Cayley$: convolutions constrained by Cayley method \citep{trockman2021orthogonalizing}, and $L_{orth}$: the regularization with $L_{orth}$. For the $L_{orth}$ regularization, we investigate the properties of the solution obtained when minimizing \eqref{model_regularise} for $\lambda \in \{ 10, 1, 10^{-1}, 10^{-2}, 10^{-3}, 10^{-4}, 10^{-5}\}$.

 After training, $\sigma_{max}$ and $\sigma_{min}$ values are computed for each convolutional layer using the method described in Appendix \ref{sec:poweriter}. Each configuration is learnt 10 times to provide mean and standard deviation for the following metrics:
\begin{itemize}
    \item \textit{Acc. clean}: Classical accuracy on a clean test set
    \item $\Sigma _{max}= \max_{l} (\sigma _{max}(\Kcal _l))$:  the largest singular value among all the convolutional layers's singular values.
    \item $\Sigma _{min}= \min_{l} (\sigma _{min}(\Kcal _l))$: the smallest  singular value among all the convolutional layers's singular values.
    \item $E_{lip}$
    :  Empirical local Lipschitz constants of the network computed using the PGD-like method proposed by~\citet{yang2020_closer}.
    \item $E_{rob}$
    : The empirical robustness accuracy, i.e. the proportion of test samples on which a vanilla Projected Gradient Descent (PGD) attack~\citep{madry2018towards} failed (for a robustness radius $\epsilon=36/255$). PGD attack is applied with 10 iterations and a  factor $\alpha=\epsilon/4.0$ .
    \item $T_{epoch}$: the average epoch processing time.
\end{itemize}

\begin{figure}[ht]
    \centering
    
    \begin{minipage}{0.49\textwidth}
        \centering
        \includegraphics[width=1.0\textwidth]{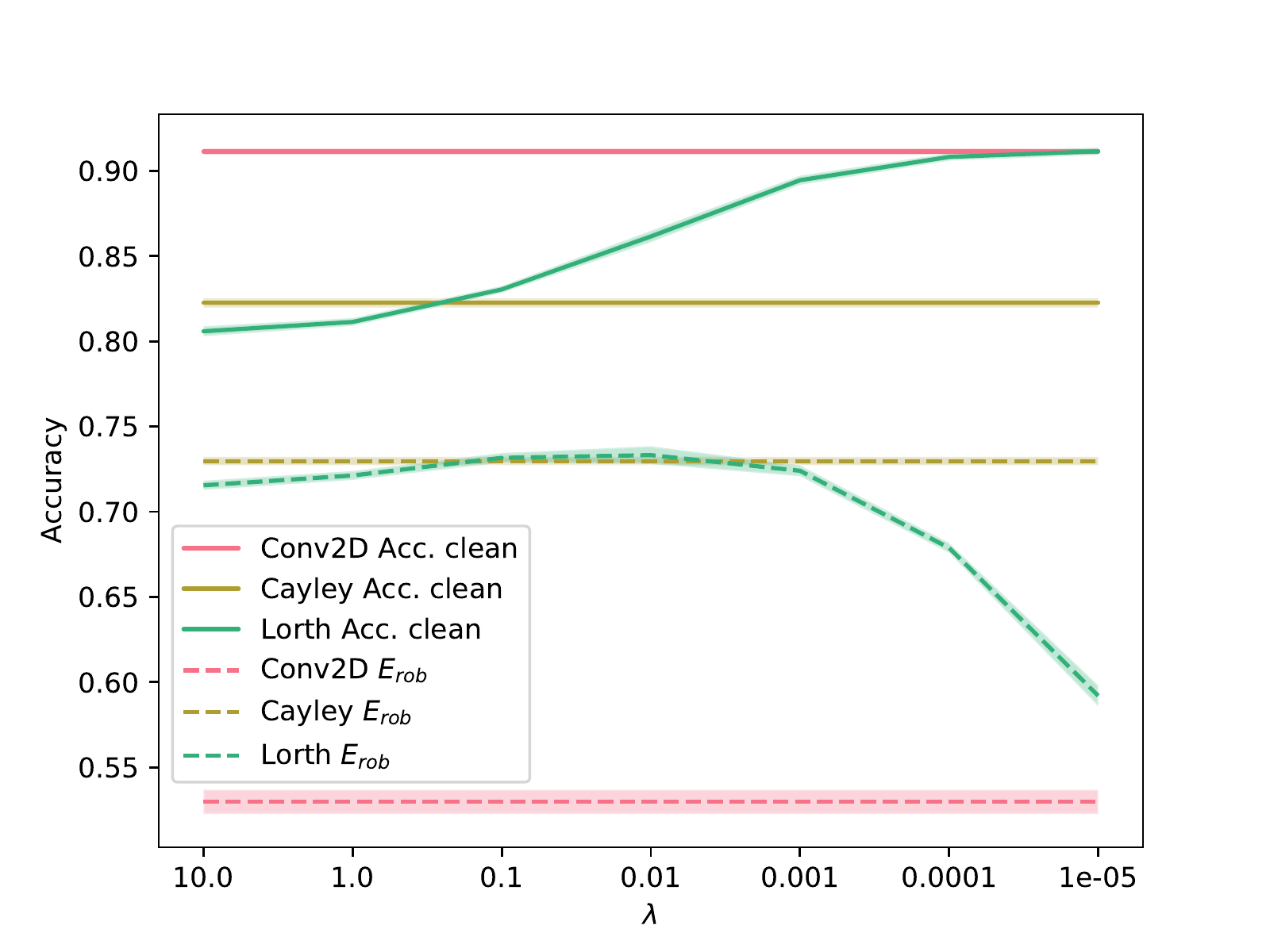} 
    \end{minipage}\hfill
    \begin{minipage}{0.49\textwidth}
        \centering
        \includegraphics[width=1.0\textwidth]{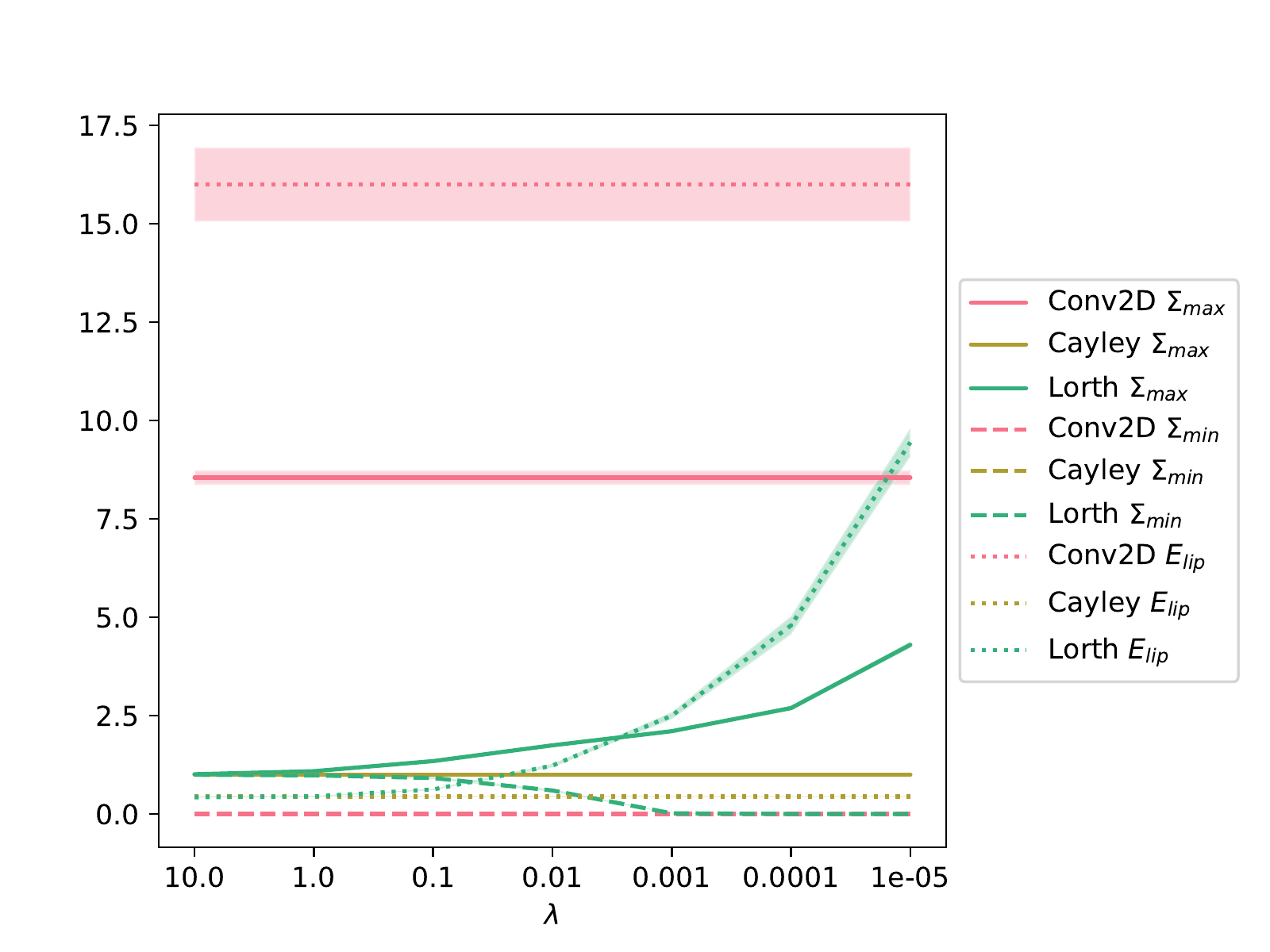} 
    \end{minipage}
    \caption{{\bf Cifar10}: Mean evolution of metrics (and error band in shadow) according to $\lambda$ parameter for $L_{orth}$ method, and comparison with $Conv2D$ and $Cayley$ configurations (constant values): (Left) Clean accuracy and empirical robustness for $\epsilon=36/255$, (Right) $\Sigma _{max}$, $\Sigma _{min}$ and $E_{lip}$ metrics.  The parameter $\lambda$ permits tune a tradeoff between accuracy and orthogonality, for the benefit of a better accuracy and a better robustness. }
\label{fig:cifar10_expe_metrics}
\end{figure}

Figure~\ref{fig:cifar10_expe_metrics} shows that the  regularization parameter $\lambda$, in \eqref{model_regularise}, provides a way to tune a tradeoff between robustness ($E_{rob}$) and clean accuracy ($Acc. clean$), by controlling the singular values of the layers ($\Sigma _{max}$ and $\Sigma _{min}$). On the contrary $Cayley$ or $Conv2D$ each provide a single tradeoff (shown with constant value in figures).  
The configurations $\lambda=10^{-1} \text{ and } 10^{-2}$  achieve better clean accuracy and similar empirical robustness performances as the $Cayley$ method.  Furthermore, their empirical Lipschitz constants are very close to one. Finally, error bands for $Cayley$ and $L_{orth}$ methods are very narrow. 

Processing time $T_{epoch}$ for regularizing with $L_{orth}$ is only $5\%$ slower than the reference network $Conv2D$, but $2.2$ times faster than the one for the $Cayley$ method. It is not reported here in detail but the convergence speeds, in number of epochs, are similar. 
Moreover, $L_{orth}$ provides classical convolution at inference. On the contrary, the $Cayley$ method provides orthogonal convolutions of size $N\times N$ obtained using a mapping that involves Fourier transforms, which leads to higher computational complexity even at inference. The change of support can also explain the slight difference in $Acc. clean$ between the $Cayley$ method and the strong regularization $\lambda= 10$ for $L_{orth}$ method.

\begin{figure}[ht]
    \centering
    
    \begin{minipage}{0.49\textwidth}
        \centering
        \includegraphics[width=1.0\textwidth]{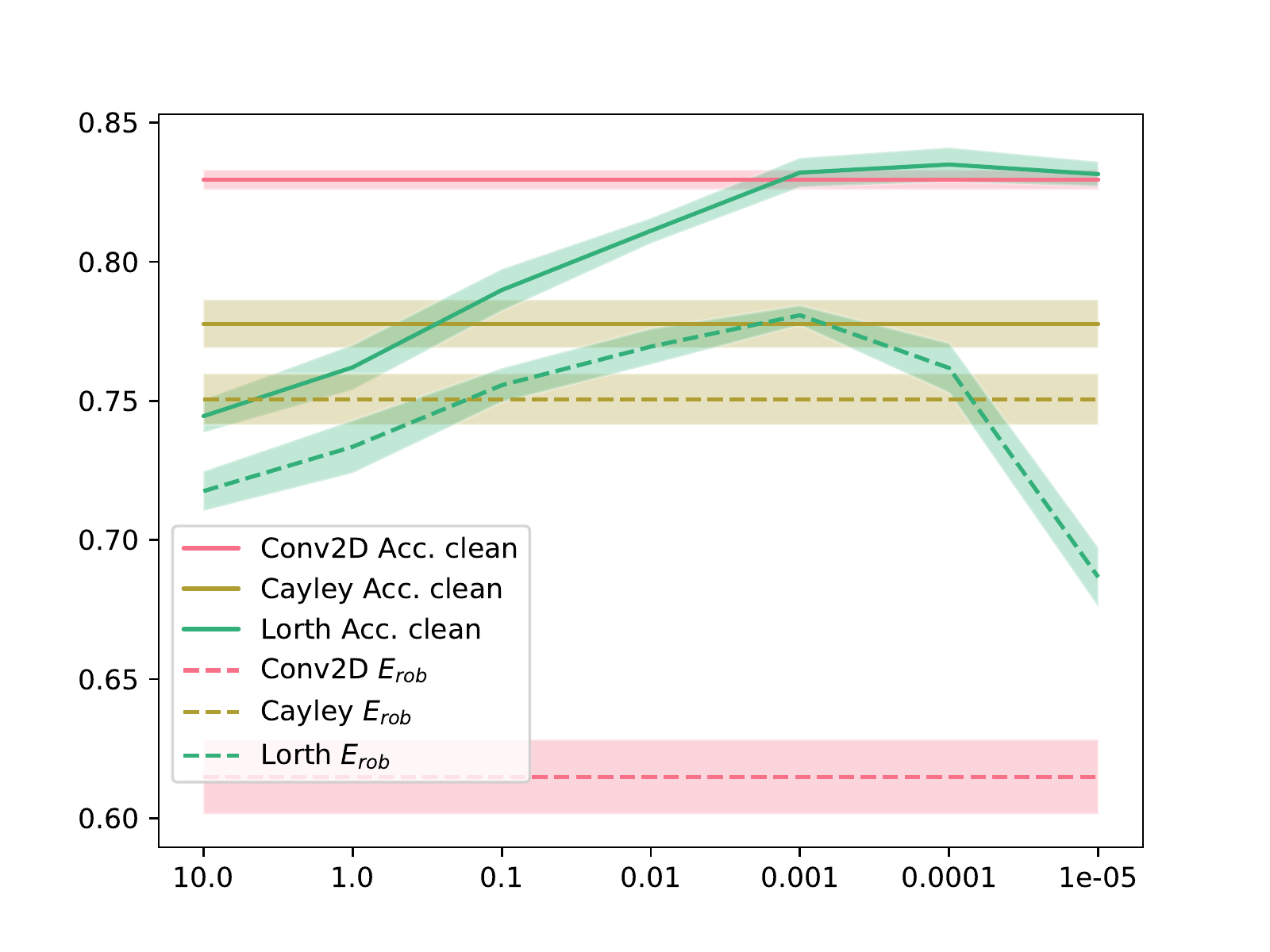} %
    \end{minipage}\hfill
    \begin{minipage}{0.49\textwidth}
        \centering
        \includegraphics[width=1.0\textwidth]{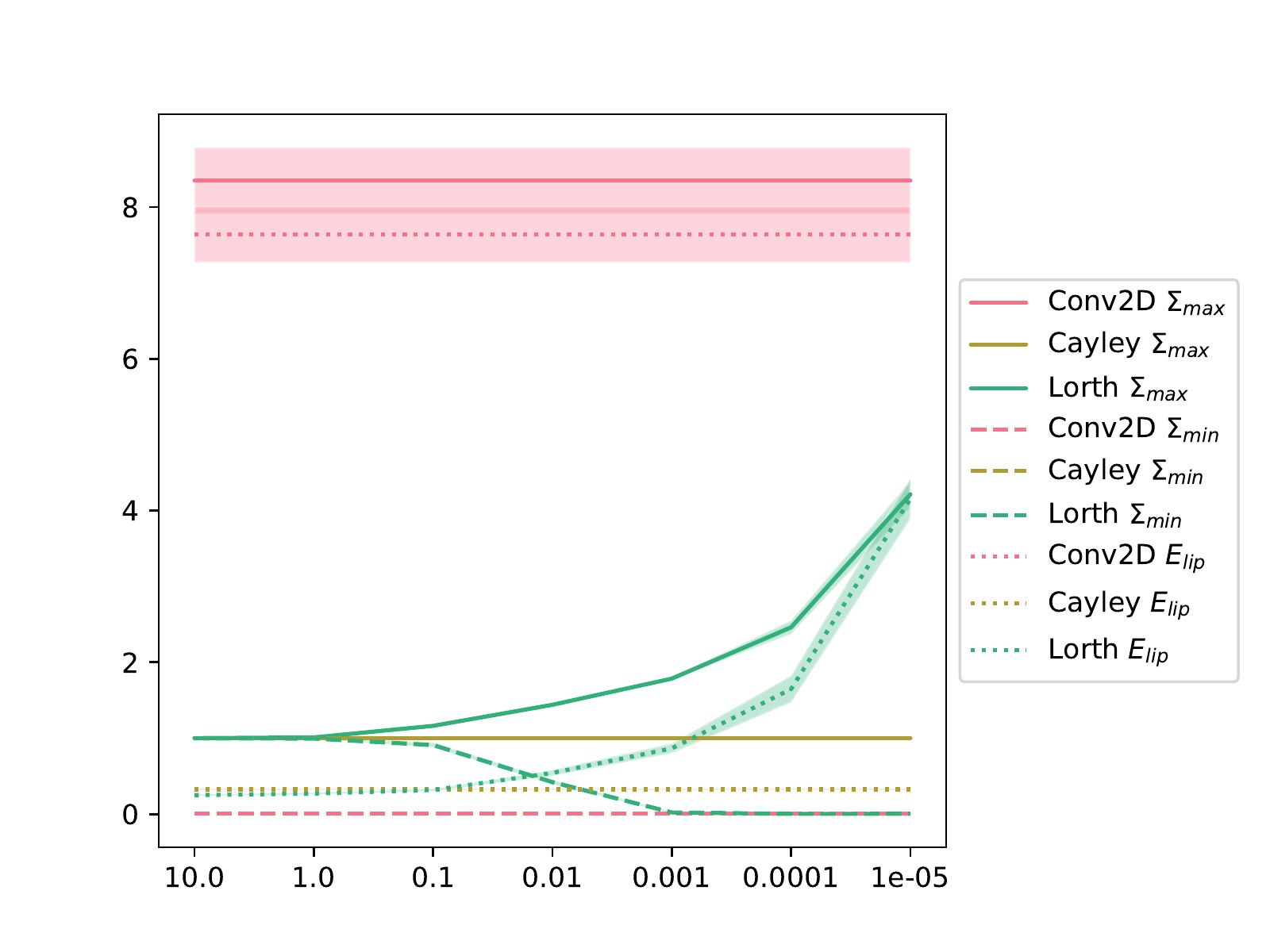} 
    \end{minipage}
    \caption{{\bf Imagenette}: Mean evolution of metrics (and error band in shadow) according to $\lambda$ parameter for $L_{orth}$ method, and comparison with $Conv2D$ and $Cayley$ configurations (constant values): (Left) Clean accuracy and empirical robustness for $\epsilon=36/255$, (Right) $\Sigma _{max}$, $\Sigma _{min}$ and $E_{lip}$ metrics. The parameter $\lambda$ permits to tune a tradeoff between accuracy and orthogonality, for the benefit of a better accuracy and a better robustness. }
\label{fig:imagenette_expe_metrics}
\end{figure}

Figure~\ref{fig:imagenette_expe_metrics} presents 
the same experiments on the Imagenette data set ~\citep{imagenetteDataset}. The latter is a 10-class subset of Imagenet data set~\citep{imagenet_cvpr09} with $160\times 160$ images. The architecture is also a VGG-like one but with 15 convolutional layers.
We trained $10$ times during $400$ epochs with a batch size of $64$, using the same loss and optimizer as for the Cifar10 experiments. The architecture and hyperparameters are described in Section~\ref{Imagenette_appendix}. The average performance of the unconstrained $Conv2D$ networks is about $83\%$.

Interestingly, even if the image size is larger on the Imagenette data set, $L_{orth}$ regularization shows the same profile as for Cifar10, when $\lambda$ decreases, ranging from strong orthogonality to high clean accuracy (equivalent to Conv2D configuration), with the best compromise for $\lambda=10^{-3}$. Notice that for $\lambda$ decreasing from $10$ to $0.01$ the loss of orthogonality permits to obtain a significantly better accuracy but does not significantly favor attacks. Altogether, this leads to an increase in the empirical robustness accuracy. The phenomenon is present but less visible on Cifar10.

Besides, because $L_{orth}$ does not depend on the size parameter $N$ of the input channels, the processing time for the $L_{orth}$ regularization is only $1.1$ times slower than for the non-constrained convolution $Conv2D$. In comparison, the $Cayley$ method is $6.5$ slower than $Conv2D$. 

\section{Conclusion}

This paper provides a necessary and sufficient condition on the architecture for the existence of an orthogonal convolutional layer with circular padding.
The conditions prove that orthogonal convolutional layers exist for most relevant architectures.
We show that the situation is less favorable with `valid' and `same' zero-paddings.
We also prove that the minimization of the surrogate $L_{orth}$ enables constructing orthogonal convolutional layers in a stable manner, that also scales well with the input size parameter $N$.
The experiments confirm that this is practically the case for most of the configurations, except when $M=CS^2$ for which interrogations remain.

Altogether, the study guarantees that the regularization with $L_{orth}$ is an efficient, stable numerical strategy to learn orthogonal convolutional layers.
It can safely be used even when the signal/image size is very large.
The regularization parameter $\lambda$ is chosen depending on the tradeoff we want between accuracy and orthogonality, for the benefit of both accuracy and robustness.

Let us mention three open questions related to this article. First, a better understanding of the landscape problem as well as solutions to this problem when $M=CS^2$ could be useful. Also, as initiated in ~\citet{Kim21LipTransformer,Fei2022OVIT}, the extension of Lipschitz and orthogonal constraints and regularization to the attention-based networks is a natural and relevant open question. Finally, a clean adaptation of the regularization with $L_{orth}$ for the 'valid' and 'same' boundary conditions is needed. 
As shown in Section \ref{other-padding-sec}, approximate orthogonality seems to be key with these boundary conditions.

\acks{
Our work has benefited from the AI Interdisciplinary Institute ANITI. ANITI is funded by the French ”Investing for
the Future – PIA3” program under the Grant agreement n°ANR-19-PI3A-0004. 
The authors gratefully acknowledge the support of the DEEL project.\footnote{https://www.deel.ai/}
}

\fi 

\ifkeepsupplementary


\appendix

\section{Notation and Definitions}
In this section we specify some notation and definitions.

\subsection{Notation} \label{notation_sec}
We summarize the notations specific to our problem in Table \ref{Notation_table}. We then describe mathematical notations, their adaptation to our context and notations for the canonical bases of matrix spaces that appear in the proofs.
\begin{table}[ht]
\begin{center}
\begin{tabular}{l l l}\hline
    notation & domain/type & description \\\hline
    $M$ &$\NN$ & number of output channels \\\hline
    $C$ &$\NN$ & number of input channels \\\hline
    $k$ & $\NN$, odd & the convolution kernel is of support $k$ for signals, $k\times k$ for images\\\hline
    $d$ & \{1,2\} & $1$ when the layer applies to signals; $2$ for images \\\hline
    $S$ & $\NN$ & stride/sampling parameter \\ \hline
    $N$ & $\NN$ & input channels are of size $SN$ for signals, $SN\times SN$ for images \\
    & & output channels are of size $N$ for signals, $N\times N$ for images \\\hline
     ${\mathbb K}_d$ & vector space & equal to $\KbfS$ for images or $\RR^{M\times C \times k}$ for signals\\\hline
    $\Kbf$ & ${\mathbb K}_d$  & kernel tensor that contains all weights defining the layer \\\hline
    $\Kcal$ & $\KkS$  & the matrix that applies the convolutional layer defined by $\Kbf$ \\
    & or $\RR^{ MN \times CSN}$ & to inputs of size defined by $N$\\\hline
    $\KbbO_d$ & subset of $\Kbb_d$ & kernel tensors $\Kbf$ such that $\Kcal$ is orthogonal\\\hline
     $\Kbf_{i,j}$ & $\RR^{k\times k}$ or $\RR^k$  & weights of the convolution from input channel $j$ to output channel $i$\\\hline
    $\mathcal{M}(\Kbf_{i,j})$ & $\RR^{N^2 \times S^2N^2}$  & matrix that applies the strided convolution defined by $\Kbf_{i,j}$ \\
     &or $\RR^{N \times SN}$  & to inputs of size defined by $N$ \\\hline
    $L_{orth}(\Kbf)$ & $\RR_+$ & regularization applied to $\Kbf$ and enforcing orthogonality of $\Kcal$, \\
    & & see Definition \ref{Lorth-def}\\\hline
    $\ERR{F}{N}(\Kbf) $& $\RR_+$ & measures, in Frobenius norm, how matrix $\Kcal$ for the signal size $N$\\
    & &deviates from being orthogonal,  \\\hline
    $\ERR{s}{N}(\Kbf) $& $\RR_+$ & same as above for the spectral norm \\\hline
   $\Iro$ & tensor & tensor appearing in the definition of $L_{orth}$ 
\\\hline\end{tabular}
\end{center}
\caption{Summary of the main notations.}\label{Notation_table}
\end{table}

The floor of a real number will be denoted by $\lfloor . \rfloor$.
For two integers $a$ and $b$, $\llbracket a,b \rrbracket$ denotes the set of integers $n$ such that $a \leq n \leq b$.
We also denote by $a\%b$ the rest of the euclidean division of $a$ by $b$, and
$\llbracket a,b \rrbracket \%n = \{x\%n | x \in \llbracket a,b \rrbracket \} $.
We denote by $\delta_{i=j}$, the Kronecker symbol, which is equal to $1$ if $i=j$, and $0$ if $i \neq j$.

We denote by $0_s$ the null vector of $\RR^s$.
For a matrix $A \in \RR^{m \times n}$, $\sigma_{max}(A)$ denotes the largest singular value of $A$ and ${\|A\|}_2= \sigma_{max}(A)$ is its spectral norm.
We also have $\|A\|_1 = \max_{0 \leq j \leq n-1 } \sum_{i=0}^{m-1} |A_{i,j}|$ and $\|A\|_{\infty} = \max_{0 \leq i \leq m-1 } \sum_{j=0}^{n-1} |A_{i,j}|$.
We denote by $\Id_n \in \RR^{n \times n}$ the identity matrix of size $n$. We use 'Matlab colon notation' as index of matrices and tensors. For instance, $A_{i,:}$ is the $i^{\mbox{th}}$ line of $A$.

Recall that $\|.\|_F$ denotes the norm which, to any tensor of order larger than or equal to 2, associates the square root of the sum of the squares of all its elements (e.g., for a matrix it corresponds to the Frobenius norm).

Recall that $S$ is the stride parameter, $k=2r+1$ is the size of the 1D kernels.
$SN$ is the size of the input channels and $N$ is the size of the output channels. \\
For a vector space $\mathcal{E}$, we denote by $\mathcal{B}(\mathcal{E})$ its canonical basis.
We set 
\begin{align}\label{def bases canoniques}
 \begin{cases}
     {(e_i)}_{i=0..k-1} = \Bb(\RR^k) \\ {(f_i)}_{i=0..SN-1} = \Bb(\RR^{SN}) \\
{(E_{a,b})}_{a=0..N-1,b=0..SN-1} = \Bb(\RR^{N \times SN}) \\
{(\overline{E}_{a,b})}_{a=0..SN-1,b=0..N-1} = \Bb(\RR^{SN \times N}) \\
{(F_{a,b})}_{a=0..SN-1,b=0..SN-1} = \Bb(\RR^{SN \times SN}) \\
{(G_{a,b})}_{a=0..N-1,b=0..N-1} = \Bb(\RR^{N \times N}) \;.
\end{cases}
\end{align}
Note that the indices start at 0, thus we have for example $e_0=\begin{bmatrix}
 1 \\ 0_{k-1}
\end{bmatrix}$, $e_{k-1} = \begin{bmatrix}
0_{k-1} \\ 1 
\end{bmatrix}$, and for all $i \in \llbracket 1,k-2 \rrbracket$, $e_i = \begin{bmatrix}
0_i \\ 1 \\ 0_{k-i-1}
\end{bmatrix}$.\\
To simplify the calculations, the definitions are extended for $ a, b $ outside the usual intervals, it is done by periodization.
Hence, for all $a,b \in \mathbb{Z}$, denoting by $\hat{a} = a\%SN $, $\Tilde{a} = a\%N$, and similarly $\hat{b} = b\%SN $, $\Tilde{b} = b\%N$,
we set 
\begin{align}\label{def bases pour Z}
    \begin{cases}
         e_a = e_{a\%k}, \qquad 
    f_a = f_{\hat{a}}  \\ 
    E_{a,b} = E_{\Tilde{a},\hat{b}} , \qquad
    \overline{E}_{a,b} =  \overline{E}_{\hat{a},\Tilde{b}} , \qquad 
    F_{a,b} = F_{\hat{a},\hat{b}} , \qquad
    G_{a,b} = G_{\Tilde{a},\Tilde{b}}.
    \end{cases}
\end{align}
Therefore, for all $a,b,c,d \in \mathbb{Z}$, we have
\begin{align}\label{produit matrice elementaires}
    \begin{cases}
         E_{a,b}F_{c,d} = \delta_{\hat{b}=\hat{c}} E_{a,d} , \qquad 
    E_{a,b}\overline{E}_{c,d} = \delta_{\hat{b}=\hat{c}} G_{a,d} \\
    \overline{E}_{a,b}E_{c,d} = \delta_{\Tilde{b}=\Tilde{c}} F_{a,d} , \qquad
    F_{a,b}\overline{E}_{c,d} = \delta_{\hat{b}=\hat{c}} \overline{E}_{a,d}.
    \end{cases}
\end{align}
Note also that
\begin{align}\label{E ab^T = bar E b,a}
    E_{a,b}^T = \overline{E}_{b,a} \;.
\end{align}

\subsection{Corresponding 1D Definitions}\label{notation1D-sec}
In this section, we give the definitions for signals (1D case), of the objects defined in the introduction for images (2D case).

\subsubsection{Orthogonality}\label{sec orthogonality appendix}
As in Section \ref{sec orthogonality main part}, we denote by $\noyau \in \RR^{M \times C \times k}$ the kernel tensor and $\Kk \in \RR^{MN \times CSN}$ the matrix that applies the convolutional layer of architecture $(M,C,k,S)$ to $C$ vectorized channels of size $SN$. Note that, in the 1D case, we need to compare $M$ with $CS$ instead of $CS^2$.\\
\textbf{RO case:} When $M \leq CS$, $\Kk$ is orthogonal if and only if $ \Kk \Kk^T = Id_{MN} \;. $ \\
\textbf{CO case:} When $M \geq CS$, $\Kk$ is orthogonal if and only if $ \Kk^T \Kk = Id_{CSN} \;. $

\subsubsection{The Function \texorpdfstring{$L_{orth}$}{}}\label{sec 1D L_orth}

We define $L_{orth}$ similarly to the 2D case (see Section \ref{sec:lorth} and Figure \ref{fig:con_h_g}).
Formally, for $P \in \mathbb{N}$, and $h,g \in \RR^k$,
we define 
\begin{align}\label{conv in R^2P+1}
    \conv(h,g,\text{padding zero} = P, \text{stride} = 1) \in \RR^{2P+1}
\end{align}
such that for all $ i \in \llbracket0,2P \rrbracket$,
\begin{align}\label{conv h g P 1 formule}
    [\conv(h,g,\text{padding zero} = P, \text{stride} = 1)]_{i} =  \sum_{i'=0}^{k-1} h_{i'} \Bar{g}_{i'+i} \;,
\end{align}
where $\Bar{g}$ is defined for $i \in \llbracket 0,2P+k-1 \rrbracket$ as follows
\begin{align}\label{def g bar}
    \bar{g}_{i} = \left\{\begin{array}{ll}
    g_{i-P} & \mbox{if } i \in \llbracket P,P+k-1 \rrbracket, \\
    0 & \mbox{otherwise}.
    \end{array}\right.
\end{align}
Note that, for $P' \leq P$, we have, for all $i \in \llbracket0,2P' \rrbracket$,
\begin{multline}\label{lien conv P et P'}
[\conv(h,g,\text{padding zero} = P', \text{stride} = 1)]_{i} \\
= [\conv(h,g,\text{padding zero} = P, \text{stride} = 1)]_{i+P-P'} .
\end{multline}

The strided version will be denoted by $\conv(h,g,\text{padding zero} = P, \text{stride} = S) \in \RR^{\lfloor 2 P/S \rfloor + 1}$ and is defined as follows:
For all $i \in  \llbracket 0,\lfloor 2 P/S \rfloor \rrbracket$
\begin{align}\label{def conv S}
    {\left[\conv(h,g,\text{padding zero} = P, \text{stride} = S)\right]}_{i} =  {\left[\conv(h,g,\text{padding zero} = P, \text{stride} = 1)\right]}_{Si}.
\end{align}
Finally, reminding that for all $m\in\llbracket1,M \rrbracket$ and $c\in\llbracket1,C \rrbracket$, $\Kbf_{m,c}\in\RR^k$, we denote by $$\CONV(\noyau,\noyau,\text{padding zero} = P, \text{stride} = S) \in \mathbb{R}^{M \times M \times (\lfloor 2P/S \rfloor +1)}$$ the third-order tensor such that, for all $m,l \in \llbracket1,M \rrbracket$,
\begin{multline}\label{def CONV}
{\CONV(\noyau,\noyau,\text{padding zero} = P, \text{stride} = S)}_{m,l,:} \\
= \sum_{c=1}^C \conv(\noyau_{m,c},\noyau_{l,c},\text{padding zero} = P, \text{stride} = S) .
\end{multline}
From now on, we take $P = \left\lfloor \frac{k-1}{S} \right\rfloor S$ and $\Iro \in \mathbb{R}^{M \times M \times (2 P/S +1)}$  the tensor whose entries are all zero except its central  $M \times M$ entry which is equal to an
identity matrix: $[\Iro]_{:,:, P/S } = Id_{M} $.
Put differently, we have for all $m,l \in \llbracket 1,M \rrbracket$,
\begin{align}\label{I_r0 bis}
    {\left[\Iro\right]}_{m,l,:} = \delta_{m=l} \begin{bmatrix}
    0_{P/S} \\ 1 \\ 0_{P/S} 
    \end{bmatrix} \;.
\end{align}
And $L_{orth}$ for 1D convolutions is defined as follows:
\begin{itemize}
\item In the RO case: 
\[L_{orth}(\noyau) = {\| \CONV(\noyau,\noyau,\text{padding zero} = P, \text{stride} = S) - \Iro \|}_F^2~.
\]
\item In the CO case: 
\[L_{orth}(\noyau) = {\| \CONV(\noyau,\noyau,\text{padding zero} = P, \text{stride} = S) - \Iro \|}_F^2 - (M-CS)~.
\]
\end{itemize}

\subsubsection{Measures of Deviation from Orthogonality}\label{sec 1D errors}

The orthogonality errors are defined by
\[\ERR{F}{N}(\Kbf) = \left\{\begin{array}{ll}
\|\Kk\Kk^T -\Id_{MN}\|_F & \mbox{, in the RO case,}\\
\|\Kk^T\Kk -\Id_{CSN}\|_F& \mbox{, in the CO case,}
\end{array}\right.
\]
and
\[\ERR{s}{N}(\Kbf) = \left\{\begin{array}{ll}
\|\Kk\Kk^T -\Id_{MN}\|_2 & \mbox{, in the RO case,}\\
\|\Kk^T\Kk -\Id_{CSN}\|_2& \mbox{, in the CO case.}
\end{array}\right.
\]

\section{The Convolutional Layer as a Matrix-Vector Product}\label{conv as matrix vec prod}
In this section, we write the convolutional layer as a matrix-vector product. 
In other words, we explicit $\Kk$ and the ingredients composing it.
The notation and preliminary results are useful in the proofs.
Note that the results are already known and can be found for example in \citet{sedghi2018singular}.
\subsection{1D Case}
We denote by $\Sn \in \mathbb{R}^{N \times SN}$ the sampling matrix (i.e., for $x = (x_0,\ldots,x_{SN-1})^T \in \mathbb{R}^{SN}$, we have for all $m \in \llbracket0,N-1\rrbracket$, $(S_Nx)_m = x_{Sm}$). \\
Put differently, we have 
\begin{align}\label{def S_N}
    S_N = \sum_{i=0}^{N-1} E_{i,Si} \;.
\end{align}
Also, note that, using \eqref{produit matrice elementaires} and \eqref{E ab^T = bar E b,a}, we have $S_N S_N^T = Id_N$
and 
\begin{align}\label{calcul S_N^T S_N}
    S_N^T S_N = \sum_{i=0}^{N-1} F_{Si,Si} \;.
\end{align}
For a vector
$x = (x_0,\ldots,x_{n-1})^T \in \mathbb{R}^n $, we denote by $C(x) \in \mathbb{R}^{n \times n}$ the circulant matrix defined by
\begin{align}\label{def mat circulante}
    C(x) = \left( \begin{array}{c c c c c}
         x_0 & x_{n-1} & \cdots & x_{2} & x_{1}  \\
         x_{1} & x_0 & x_{n-1} &  & x_{2} \\
         \vdots & x_{1} & x_0 & \ddots & \vdots \\
         x_{n-2} &  & \ddots & \ddots & x_{n-1} \\
         x_{n-1} & x_{n-2} & \cdots & x_{1} & x_0
    \end{array} \right) \;.
\end{align}
In other words, for $x \in \mathbb{R}^{n}$ and $X \in \mathbb{R}^{n \times n}$,
we have
\begin{align}\label{cond mat circulante}
    X = C(x) \iff \forall m,l \in \llbracket 0,n-1 \rrbracket , \  X_{m,l} = x_{(m-l)\%n} \;.
\end{align}
The notation for the circulant matrix $C(.)$ should not be confused with the number of the input channels $C$.
We also denote by $\Tilde{x} \in \RR^n$  the vector such that for all $i \in \llbracket0,n-1 \rrbracket$ , $\Tilde{x}_i = x_{(-i)\%n}$.
Again, the notation $\Tilde{x}$, for $x \in \RR^n$, should not be confused with $\Tilde{a}$, for $a \in \mathbb{Z}$.
We have 
\begin{align}\label{transpose circulant matrix}
    C(x)^T = C(\Tilde{x}) \;.
\end{align}
Also, for $x,y \in \RR^n$, we have 
\begin{align}\label{product circulant matrices}
    C(x)C(y) = C(x*y),
\end{align}
where $x*y \in \RR^n$, is such that for all $j \in \llbracket0,n-1\rrbracket$, 
\begin{align}\label{conv circulaire formule}
    [x*y]_j = \sum_{i=0}^{n-1}x_i y_{(j-i)\%n}.
\end{align}
$x*y$ is extended by $n$-periodicity.
Note that here $x*y$ denotes the classical convolution as defined in math (i.e. by flipping the second argument).
Note also that $x*y = y*x$ and therefore
\begin{align}\label{commutativity circulant matrices}
    C(x)C(y) = C(y)C(x) \;.
\end{align}
Throughout the article, the size of a filter is smaller than the size of the signal ($k=2r+1 \leq SN$).
For $n \geq k$, we introduce an embedding $P_n$ which associates to each
$h = (h_0,\ldots,h_{2r})^T \in \mathbb{R}^k$ the corresponding vector
\begin{align*}
    P_n(h) = (h_r,\ldots,h_1,h_0,0,\ldots,0,h_{2r},\ldots,h_{r+1})^T \in \mathbb{R}^n \;.
\end{align*}
Setting $[P_n(h)]_{i} = [P_n(h)]_{i\%n}$ for all $i \in \mathbb{Z}$, we have the following formula for
$P_n$: for $i \in \llbracket -r,-r+n-1 \rrbracket$,
\begin{align}\label{eq def P_n}
{[P_n(h)]}_i = \left\{ \begin{array}{l l}
h_{r-i} & \text{ if } i \in \llbracket-r,r \rrbracket \\
0 & \text{ otherwise. } 
\end{array} \right.
\end{align}

\textbf{Single-channel case:}
Let $x=(x_0,\ldots,x_{SN-1})^T \in \mathbb{R}^{SN}$ be a 1D signal.
We denote by $\text{Circular\_Conv}(h,x, \text{stride} = 1)$ the result of the circular convolution\footnote{as defined in machine learning (we do not flip h).} of $x$ with the kernel $h=(h_0,\ldots,h_{2r})^T \in \mathbb{R}^k$.
We have
\begin{align*}
    \text{Circular\_Conv}(h,x, \text{stride} = 1)
    &= \left( \sum_{i'=0}^{k-1} h_{i'} x_{(i'+i-r)\%SN}\right)_{i=0..SN-1} \;.
\end{align*}
Written as a matrix-vector product, this becomes
\begin{align*}
    & \left( \begin{array}{c c c c c c}
       h_0  & \cdots & h_{2r} & 0 & \cdots &0 \\
        0 & \ddots & \ddots & \ddots & \ddots & \vdots  \\
        \vdots & \ddots & \ddots & \ddots & \ddots & 0 \\
        0& \cdots & 0 & h_0  & \cdots & h_{2r} \\
    \end{array} \right)
    \left( \begin{array}{c}
         x_{SN-r} \\ \vdots \\ x_{SN-1} \\ x_0 \\ \vdots \\ x_{SN-1} \\ x_0 \\ \vdots \\ x_{r-1}
    \end{array} \right) \in \mathbb{R}^{SN} \\ 
    &= \left( \begin{array}{c c c c c c c c c c}
        h_r & h_{r+1} & \cdots & h_{2r} & 0 & \cdots & 0 & h_0 & \cdots & h_{r-1} \\
        h_{r-1} & \ddots & \ddots & \ddots & \ddots & \ddots & \ddots & \ddots & \ddots & \vdots  \\
        \vdots & \ddots & \ddots & \ddots & \ddots & \ddots & \ddots & \ddots & \ddots & h_0 \\
        h_0 & \ddots & \ddots & \ddots & \ddots & \ddots & \ddots & \ddots & \ddots & 0 \\
        0 & \ddots & \ddots & \ddots & \ddots & \ddots & \ddots & \ddots & \ddots & \vdots \\
        \vdots & \ddots & \ddots & \ddots & \ddots & \ddots & \ddots & \ddots & \ddots & 0 \\
        0 & \ddots & \ddots & \ddots & \ddots & \ddots & \ddots & \ddots & \ddots & h_{2r} \\
        h_{2r} & \ddots & \ddots & \ddots & \ddots & \ddots & \ddots & \ddots & \ddots & \vdots \\
        \vdots & \ddots & \ddots & \ddots & \ddots & \ddots & \ddots & \ddots & \ddots & h_{r+1} \\
        h_{r+1} & \cdots & h_{2r} & 0 & \cdots & 0 & h_0 & \cdots & h_{r-1} & h_r \\
    \end{array} \right) x \\
    &= C(P_{SN}(h)) x \;.
\end{align*}
The strided convolution is
\begin{align}\label{circular conv 1D 1 channel}
    \text{Circular\_Conv}(h,x,\text{stride}=S) = S_NC(P_{SN}(h)) x \in \mathbb{R}^N \;.
\end{align}
Notice that $S_NC(P_{SN}(h)) \in \RR^{N \times SN}$.

\textbf{Multi-channel convolution:}
Let $X \in \mathbb{R}^{C \times SN}$ be a multi-channel 1D signal.
We denote by $\text{Circular\_Conv}(\noyau,X,\text{stride} = S)$ the result of the strided circular convolutional layer of kernel $\noyau \in \mathbb{R}^{M \times C \times k}$ applied to $X$.
Using \eqref{circular conv 1D 1 channel} for all the input-output channel correspondences, we have
$
    Y = \text{Circular\_Conv}(\noyau,X,\text{stride}=S) \in \mathbb{R}^{M \times N}
$
if and only if
\begin{align*}
    \Vect(Y) &= 
    \left(
\begin{array}{c c c}
  S_NC(P_{SN}(\noyau_{1,1})) & \ldots & S_NC(P_{SN}(\noyau_{1,C})) \\
  \vdots & \vdots & \vdots \\
  S_NC(P_{SN}(\noyau_{M,1})) &\ldots & S_NC(P_{SN}(\noyau_{M,C}))
\end{array}
\right) \Vect(X) \;,
\end{align*}
where $\noyau_{i,j} = \noyau_{i,j,:} \in \mathbb{R}^k$.
Therefore,
\begin{align}\label{def mathcal K 1D}
\mathcal{K} = \left(
\begin{array}{c c c}
  S_NC(P_{SN}(\noyau_{1,1})) & \ldots & S_NC(P_{SN}(\noyau_{1,C})) \\
  \vdots & \vdots & \vdots \\
  S_NC(P_{SN}(\noyau_{M,1})) &\ldots & S_NC(P_{SN}(\noyau_{M,C}))
\end{array}
\right) \in \mathbb{R}^{MN \times CSN}
\end{align}
is the \Kcalname~associated to kernel $\noyau$.

\subsection{2D Case}
Notice that, since they are very similar, the proofs and notation are detailed in the 1D case, but we only provide a sketch of the proof and the main equations in 2D.
In order to distinguish between the 1D and 2D versions of $C(.)$, $P_n$ and $S_N$, we use calligraphic symbols in the 2D case.
We denote by $\Ss_N \in \RR^{N^2 \times S^2N^2}$ the sampling matrix in the 2D case (i.e., for a matrix $x \in \RR^{SN \times SN}$, if we denote by $z \in \RR^{N \times N}$, such that for all $i,j \in \llbracket0,N-1\rrbracket$, $z_{i,j} = x_{Si,Sj}$, then $\Vect(z) = \Ss_N \Vect(x)$).\\
For a matrix
$x \in \mathbb{R}^{n \times n} $, we denote by $\Cc(x) \in \mathbb{R}^{n^2 \times n^2}$ the doubly-block circulant matrix defined by
\begin{align*}
    \Cc(x) = \left( \begin{array}{c c c c c}
         C(x_{0,:}) & C(x_{n-1,:}) & \cdots & C(x_{2,:}) & C(x_{1,:})  \\
         C(x_{1,:}) & C(x_{0,:}) & C(x_{n-1,:}) &  & C(x_{2,:}) \\
         \vdots & C(x_{1,:}) & C(x_{0,:}) & \ddots & \vdots \\
         C(x_{n-2,:}) &  & \ddots & \ddots & C(x_{n-1,:}) \\
         C(x_{n-1,:}) & C(x_{n-2,:}) & \cdots & C(x_{1,:}) & C(x_{0,:})
    \end{array} \right) \;.
\end{align*}

For $n \geq k=2r+1$, we introduce the operator $\Pp_n$ which associates to a matrix
$h \in \mathbb{R}^{k \times k}$ the corresponding matrix
\begin{align*}
    \Pp_n(h) = \left(\begin{array}{c c c c c c c c c}
     h_{r,r} & \cdots & h_{r,0} & 0 & \cdots & 0 & h_{r,2r} & \cdots & h_{r,r+1}  \\
     \vdots & \vdots & \vdots & \vdots & \vdots & \vdots & \vdots & \vdots & \vdots \\
     h_{0,r} & \cdots & h_{0,0} & 0 & \cdots & 0 & h_{0,2r} & \cdots & h_{0,r+1} \\
     0 & \dots & 0 & 0 & \dots & 0 & 0 & \dots & 0 \\
     \vdots & \vdots & \vdots & \vdots & \vdots & \vdots & \vdots & \vdots & \vdots \\
     0 & \dots & 0 & 0 & \dots & 0 & 0 & \dots & 0 \\
     h_{2r,r} & \cdots & h_{2r,0} & 0 & \cdots & 0 & h_{2r,2r} & \cdots & h_{2r,r+1}  \\
     \vdots & \vdots & \vdots & \vdots & \vdots & \vdots & \vdots & \vdots & \vdots \\
     h_{r+1,r} & \cdots & h_{r+1,0} & 0 & \cdots & 0 & h_{r+1,2r} & \cdots & h_{r+1,r+1}  
\end{array} \right) \in \mathbb{R}^{n \times n} \;.
\end{align*}
Setting $[\Pp_n(h)]_{i,j} = [\Pp_n(h)]_{i\%n,j\%n}$ for all $i,j \in \mathbb{Z}$, we have the following formula for
$\Pp_n$: for $(i,j) \in \llbracket -r,-r+n-1 \rrbracket^2$,
\begin{align*}
{[\Pp_n(h)]}_{i,j} = \left\{ \begin{array}{l l}
h_{r-i,r-j} & \text{ if } (i,j) \in \llbracket-r,r \rrbracket^2 \\
0 & \text{ otherwise. }
\end{array} \right.
\end{align*}

\textbf{Single-channel case:}
Let $x \in \mathbb{R}^{SN \times SN}$ be a 2D image.
We denote by $\text{Circular\_Conv}(h,x, \text{stride} = 1)$ the result of the circular convolution of $x$ with the kernel $h \in \mathbb{R}^{k \times k}$.
As in the 1D case, we have
\begin{align*}
    y = \text{Circular\_Conv}(h,x, \text{stride} = 1) \iff \Vect(y) = \Cc(\Pp_{SN}(h)) \Vect(x)
\end{align*}
and the strided circular convolution
\begin{align*}
    y = \text{Circular\_Conv}(h,x, \text{stride} = S) \iff \Vect(y) = \Ss_N \Cc(\Pp_{SN}(h)) \Vect(x) \;.
\end{align*}
Notice that $\Ss_N \Cc(\Pp_{SN}(h)) \in \RR^{N^2 \times S^2N^2}$.

\textbf{Multi-channel convolution :}
Let $X \in \mathbb{R}^{C \times SN \times SN}$ be a multi-channel 2D image.
We denote by $\text{Circular\_Conv}(\noyau,X,\text{stride} = S)$ the result of the strided circular convolutional layer of kernel $\noyau \in \mathbb{R}^{M \times C \times k \times k}$ applied to $X$.
We have
$
    Y = \text{Circular\_Conv}(\noyau,X,\text{stride}=S) \in \mathbb{R}^{M \times N \times N}
$
if and only if
\begin{align*}
    \Vect(Y) &= 
    \left(
\begin{array}{c c c}
  \Ss_N \Cc(\Pp_{SN}(\noyau_{1,1})) & \ldots & \Ss_N \Cc(\Pp_{SN}(\noyau_{1,C})) \\
  \vdots & \vdots & \vdots \\
  \Ss_N \Cc(\Pp_{SN}(\noyau_{M,1})) &\ldots & \Ss_N \Cc(\Pp_{SN}(\noyau_{M,C}))
\end{array}
\right) \Vect(X) \;,
\end{align*}
where $\noyau_{i,j} = \noyau_{i,j,:,:} \in \mathbb{R}^{k \times k}$.
Therefore,
\begin{align*}
\mathcal{K} = \left(
\begin{array}{c c c}
  \Ss_N \Cc(\Pp_{SN}(\noyau_{1,1})) & \ldots & \Ss_N \Cc(\Pp_{SN}(\noyau_{1,C})) \\
  \vdots & \vdots & \vdots \\
  \Ss_N \Cc(\Pp_{SN}(\noyau_{M,1})) &\ldots & \Ss_N \Cc(\Pp_{SN}(\noyau_{M,C}))
\end{array}
\right) \in \mathbb{R}^{MN^2 \times CS^2N^2}
\end{align*}
is the \Kcalname~associated to kernel $\noyau$.


\section{Proof of Theorem \ref{Prop existence cco}}\label{proof Prop existence cco}

As the proofs are very similar in the 1D and 2D cases, we give the full proof in the 1D case, in Section \ref{thm4_1D_proof_sec}, and we only give a sketch of the proof in the 2D case, in Section \ref{proof2D_exist_sec}.

We first prove the result in the RO case, then in the CO case. In each case, we prove separately the statement when an orthogonal convolutional layer exists and when no orthogonal convolutional layer exists. When the architecture places us in the former case, to prove an orthogonal convolutional layer exists, we exhibit an explicit kernel tensor $\Kbf$ and do the calculations to prove that $\Kk$ is orthogonal. The calculations are based on Lemma \ref{simplif 1}, in the RO case, and Lemma \ref{simplif 2}, in the CO case. Lemma \ref{simplif 0} synthesize the result of a calculation that is used to prove both Lemma \ref{simplif 1} and Lemma \ref{simplif 2}.  When on the contrary the architecture is such that there does not exist any orthogonal convolutional layer, we prove that for all $\Kbf$ the architecture condition implies $\rk(\Kk \Kk^T) <\rk(Id_{MN})$, in the RO case, and $\rk(\Kk^T\Kk) < \rk(Id_{CSN})$, in the CO case. This proves that no orthogonal convolutional layer exists.

\subsection{Proof of Theorem \ref{Prop existence cco}, for 1D Convolutional Layers}\label{thm4_1D_proof_sec}

We start by stating and proving three intermediate lemmas.
Recall that $k=2r+1$ and from \eqref{def bases canoniques}, that ${(e_i)}_{i=0..k-1} = \Bb(\RR^k)$ and ${(E_{a,b})}_{a=0..N-1,b=0..SN-1} = \Bb(\RR^{N \times SN})$.
\begin{lemma}\label{simplif 0}
    Let $j \in \llbracket0,k-1 \rrbracket$. We have
    \begin{align*}
        S_N C\left(P_{SN}\left(e_j\right)\right) 
        &= \sum_{i=0}^{N-1} E_{i,Si+j-r} \;.
    \end{align*}
\end{lemma}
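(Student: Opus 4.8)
The plan is to evaluate the product $S_N C(P_{SN}(e_j))$ directly in the canonical bases $(E_{a,b})$ and $(F_{a,b})$ of \eqref{def bases canoniques}, reducing everything to the elementary multiplication rules \eqref{produit matrice elementaires}. The argument is a three-step index computation with no conceptual difficulty.

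First I would pin down the embedded vector. Since $(e_j)_m = \delta_{m=j}$, formula \eqref{eq def P_n} gives $[P_{SN}(e_j)]_i = (e_j)_{r-i} = \delta_{i=r-j}$ for $i \in \llbracket -r,r \rrbracket$ and $0$ otherwise. As $r-j \in \llbracket -r,r \rrbracket$ and $SN \geq k = 2r+1$, the index does not wrap, so after periodization $P_{SN}(e_j) = f_{(r-j)\%SN}$, the canonical vector of $\RR^{SN}$ whose only nonzero entry equals $1$ and sits at position $(r-j)\%SN$.

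Next I would write the circulant matrix in the $(F_{a,b})$ basis. By the characterization \eqref{cond mat circulante}, $[C(P_{SN}(e_j))]_{m,l} = [P_{SN}(e_j)]_{(m-l)\%SN} = \delta_{(m-l)\%SN = (r-j)\%SN}$, hence $C(P_{SN}(e_j)) = \sum_{m=0}^{SN-1} F_{m,m-r+j}$, with the second index taken modulo $SN$. Left-multiplying by $S_N = \sum_{i=0}^{N-1} E_{i,Si}$ (see \eqref{def S_N}) and applying the rule $E_{a,b}F_{c,d} = \delta_{\hat b = \hat c} E_{a,d}$ yields
\[
S_N C(P_{SN}(e_j)) = \sum_{i=0}^{N-1}\sum_{m=0}^{SN-1} \delta_{\widehat{Si}=\hat m}\, E_{i,m-r+j}.
\]

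Finally I would collapse the inner sum. For $i \in \llbracket 0,N-1 \rrbracket$ we have $Si < SN$, so $\widehat{Si} = Si$, and among $m \in \llbracket 0,SN-1 \rrbracket$ the condition $\hat m = Si$ selects exactly $m = Si$; the inner sum reduces to $E_{i,Si-r+j}$, giving the claimed identity $\sum_{i=0}^{N-1} E_{i,Si+j-r}$. The only point deserving care is this last selection, i.e.\ checking that $Si$ never reduces nontrivially modulo $SN$ for $i \leq N-1$ — precisely what guarantees that the Kronecker factor keeps a single term per $i$; everything else is routine bookkeeping of the periodization conventions \eqref{def bases pour Z}.
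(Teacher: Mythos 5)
Your proposal is correct and follows essentially the same route as the paper's proof: identify $P_{SN}(e_j)$ as the canonical vector $f_{r-j}$ (modulo $SN$), expand the circulant matrix as $\sum_{m} F_{m,m+j-r}$ via \eqref{cond mat circulante}, and collapse the product with $S_N=\sum_i E_{i,Si}$ using the rule $E_{a,b}F_{c,d}=\delta_{\hat b=\hat c}E_{a,d}$. The only difference is that you spell out explicitly why the Kronecker condition selects the single term $m=Si$, a detail the paper leaves implicit.
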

\begin{proof}
Let $j \in \llbracket0, k-1 \rrbracket$.
Using \eqref{eq def P_n}, \eqref{def bases canoniques}, \eqref{def bases pour Z} and \eqref{def mat circulante}, we have 
\begin{align*}
        C(P_{SN}(e_j)) = C(f_{r-j}) 
        = \sum_{i=0}^{SN-1} F_{i,i-(r-j)} 
        = \sum_{i=0}^{SN-1} F_{i,i+j-r} \;.
\end{align*}
Using \eqref{def S_N} and \eqref{produit matrice elementaires}, we have
\begin{align*}
    S_N C\left(P_{SN}\left(e_j\right)\right) 
    = \left(\sum_{i=0}^{N-1} E_{i,Si}\right)\left(\sum_{i'=0}^{SN-1} F_{i',i'+j-r}\right)
    = \sum_{i=0}^{N-1} E_{i,Si+j-r} \;.
\end{align*}
\end{proof}

\begin{lemma}\label{simplif 1}
    Let $k_S = \min(k,S)$ and $j,l \in \llbracket0,k_S-1 \rrbracket$.
    We have
\begin{align*}
    S_N C(P_{SN}(e_j)) C(P_{SN}(e_l))^T S_N^T = \delta_{j=l} Id_N \;.
\end{align*}
\end{lemma}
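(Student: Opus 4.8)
The plan is to reduce both factors to sums of the elementary matrices $E_{a,b}$ and $\overline E_{a,b}$ via Lemma~\ref{simplif 0}, multiply them out with the product and transpose rules \eqref{produit matrice elementaires} and \eqref{E ab^T = bar E b,a}, and then let a short modular-arithmetic argument extract the Kronecker factor $\delta_{j=l}$. First I would apply Lemma~\ref{simplif 0} to the left factor to get $S_N C(P_{SN}(e_j)) = \sum_{i=0}^{N-1} E_{i,Si+j-r}$. For the right factor, since $C(P_{SN}(e_l))^T S_N^T = \bigl(S_N C(P_{SN}(e_l))\bigr)^T$, applying the transpose identity \eqref{E ab^T = bar E b,a} to Lemma~\ref{simplif 0} gives $C(P_{SN}(e_l))^T S_N^T = \sum_{i'=0}^{N-1} \overline E_{Si'+l-r,\,i'}$.

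Next I would expand the product into the double sum $\sum_{i,i'} E_{i,Si+j-r}\,\overline E_{Si'+l-r,\,i'}$ and apply the rule $E_{a,b}\overline E_{c,d} = \delta_{\hat b=\hat c}\,G_{a,d}$ from \eqref{produit matrice elementaires}, where $\hat x = x\%SN$. Each term collapses to $\delta_{(Si+j-r)\%SN=(Si'+l-r)\%SN}\,G_{i,i'}$, so the whole product equals $\sum_{i,i'} \delta\, G_{i,i'}$ in which the Kronecker symbol encodes the congruence $S(i-i') \equiv l-j \pmod{SN}$.

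The crux — and the only place the hypothesis $j,l \in \llbracket 0,k_S-1\rrbracket$ with $k_S=\min(k,S)$ enters — is the analysis of this congruence. Reducing it modulo $S$ (valid since $S \mid SN$) yields $l-j \equiv 0 \pmod S$; but $|l-j| \le k_S-1 \le S-1 < S$, so necessarily $l=j$, which kills every cross term when $j\ne l$ and supplies the factor $\delta_{j=l}$. When $j=l$ the congruence becomes $S(i-i') \equiv 0 \pmod{SN}$, i.e. $i\equiv i' \pmod N$, which for $i,i'\in\llbracket0,N-1\rrbracket$ forces $i=i'$; the sum then reduces to $\sum_{i=0}^{N-1} G_{i,i} = Id_N$. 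Combining both cases gives $\delta_{j=l}\,Id_N$, as claimed. I expect the bound $|l-j|\le S-1$ forcing $j=l$ to be the one genuinely load-bearing step; the rest is routine bookkeeping with the elementary-matrix identities.
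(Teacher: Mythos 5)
Your proof is correct and follows essentially the same route as the paper's: reduce both factors via Lemma~\ref{simplif 0}, transpose with \eqref{E ab^T = bar E b,a}, multiply out with \eqref{produit matrice elementaires}, and analyze the resulting congruence. The only (harmless) difference is in the final step, where you resolve $S(i-i')\equiv l-j \pmod{SN}$ by reducing modulo $S$, whereas the paper observes that $Si+j-r$ and $Si'+l-r$ both lie in a common interval of $SN$ consecutive integers so the congruence is an equality; both arguments exploit $|l-j|\le k_S-1<S$ in the same essential way.
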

\begin{proof}
Let $j,l \in \llbracket 0,k_S-1 \rrbracket$.
Since $k_S \leq k$, using Lemma \ref{simplif 0} and \eqref{E ab^T = bar E b,a},
\begin{align}
    S_N C\left(P_{SN}\left(e_j\right)\right) C\left(P_{SN}\left(e_l\right)\right)^T S_N^T
     &= \left(\sum_{i=0}^{N-1} E_{i,Si+j-r}\right)
    \left(\sum_{i'=0}^{N-1} {E}_{i',Si'+l-r}\right)^T \nonumber \\
    &= \left(\sum_{i=0}^{N-1} E_{i,Si+j-r}\right)
    \left(\sum_{i'=0}^{N-1} \overline{E}_{Si'+l-r,i'}\right) \;. \label{intermediaire calcul 1}
\end{align}
We know from \eqref{produit matrice elementaires} that $E_{i,Si+j-r} \overline{E}_{Si'+l-r,i'} = \delta_{\widehat{Si+j-r} = \widehat{Si'+l-r}} G_{i,i'}$.
But for $i,i' \in \llbracket0,N-1 \rrbracket$ and $j,l \in \llbracket0,k_S-1 \rrbracket$, since $k_S \leq S$,
we have 
$$ -r \leq Si+j-r \leq S(N-1) +k_S-1 -r \leq SN-1 -r .$$
Similarly, $Si'+l-r \in \llbracket-r,SN-1-r \rrbracket$.
Therefore, ${Si+j-r}$ and  ${Si'+l-r}$ lie in the same interval of size $SN$, hence
\begin{align*}
    \widehat{Si+j-r} = \widehat{Si'+l-r} \iff {Si+j-r} = {Si'+l-r} 
     \iff Si+j=Si'+l \;.
\end{align*}
If $Si+j=Si'+l$, then $$ |S(i-i')| = |j-l| < k_S \leq S.$$
Since $|i-i'| \in \mathbb{N}$, the latter inequality implies $i=i'$ and, as a consequence, $j=l$.
Finally,
\begin{align*}
    \widehat{Si+j-r} = \widehat{Si'+l-r} \iff i=i' \text{ and } j=l \;.
\end{align*}
Hence, using \eqref{produit matrice elementaires}, the equality \eqref{intermediaire calcul 1} becomes
\begin{align*}
    S_N C(P_{SN}(e_j)) C(P_{SN}(e_l))^T S_N^T &= \delta_{j=l} \sum_{i=0}^{N-1} G_{i,i}
    = \delta_{j=l} Id_N \;.
\end{align*}
\end{proof}
\begin{lemma}\label{simplif 2}
Let $S \leq k $.
We have
\begin{align*}
    \sum_{z=0}^{S-1} C(P_{SN}(e_z))^T S_N^T S_N C(P_{SN}(e_z)) = Id_{SN} \;.
\end{align*}
\end{lemma}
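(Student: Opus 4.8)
The plan is to identify each summand as a Gram matrix and grind it down to a sum of diagonal elementary matrices, using the machinery of \eqref{produit matrice elementaires}--\eqref{E ab^T = bar E b,a} together with Lemma \ref{simplif 0}. The hypothesis $S \leq k$ enters right at the start: it ensures that for every $z \in \llbracket 0, S-1 \rrbracket$ we have $z \in \llbracket 0, k-1 \rrbracket$, so that each $e_z$ is a genuine canonical vector of $\RR^k$ and Lemma \ref{simplif 0} applies, giving $S_N C(P_{SN}(e_z)) = \sum_{i=0}^{N-1} E_{i,Si+z-r}$.

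First I would rewrite each summand as a square of a single matrix, namely $C(P_{SN}(e_z))^T S_N^T S_N C(P_{SN}(e_z)) = \bigl(S_N C(P_{SN}(e_z))\bigr)^T \bigl(S_N C(P_{SN}(e_z))\bigr)$. Transposing with \eqref{E ab^T = bar E b,a} and multiplying out with the rule $\overline{E}_{a,b} E_{c,d} = \delta_{\Tilde b = \Tilde c} F_{a,d}$ from \eqref{produit matrice elementaires}, I get
\[
C(P_{SN}(e_z))^T S_N^T S_N C(P_{SN}(e_z)) = \sum_{i,i'=0}^{N-1} \delta_{\Tilde i = \Tilde i'} F_{Si+z-r,\,Si'+z-r} = \sum_{i=0}^{N-1} F_{Si+z-r,\,Si+z-r},
\]
where the collapse to the diagonal uses that $i,i' \in \llbracket 0,N-1\rrbracket$ forces $\Tilde i = i$ and $\Tilde{i'} = i'$, so $\delta_{\Tilde i = \Tilde i'} = \delta_{i=i'}$.

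It then remains to sum over $z$ and count indices. The map $(i,z) \mapsto Si+z$ is a bijection from $\llbracket 0,N-1\rrbracket \times \llbracket 0,S-1\rrbracket$ onto $\llbracket 0,SN-1\rrbracket$ by Euclidean division, so the diagonal indices $Si+z-r$ form a shift by $-r$ of a complete residue system modulo $SN$; by the periodization convention $F_{a,a} = F_{\hat a,\hat a}$ of \eqref{def bases pour Z}, each diagonal elementary matrix $F_{a,a}$ with $a \in \llbracket 0,SN-1\rrbracket$ is therefore produced exactly once. Hence $\sum_{z=0}^{S-1}\sum_{i=0}^{N-1} F_{Si+z-r,\,Si+z-r} = \sum_{a=0}^{SN-1} F_{a,a} = Id_{SN}$, which is the claim. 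The computation is otherwise mechanical; the only delicate point --- and the closest thing to an obstacle --- is this index bookkeeping, i.e.\ checking the bijectivity of $(i,z)\mapsto Si+z$ and that the $-r$ shift leaves the coverage of residues modulo $SN$ intact, both of which follow from the division algorithm and the periodization in \eqref{def bases pour Z}.
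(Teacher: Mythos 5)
Your proof is correct and follows essentially the same route as the paper's: both apply Lemma \ref{simplif 0} (valid since $S\leq k$ puts $z$ in $\llbracket 0,k-1\rrbracket$), collapse the product of elementary matrices to $\sum_{i} F_{Si+z-r,Si+z-r}$ via \eqref{E ab^T = bar E b,a} and \eqref{produit matrice elementaires}, and then observe that $(i,z)\mapsto Si+z-r$ sweeps a full residue system modulo $SN$, so periodization \eqref{def bases pour Z} yields $\sum_{a=0}^{SN-1}F_{a,a}=Id_{SN}$. The only difference is cosmetic (you make the Kronecker-delta collapse and the bijectivity of Euclidean division explicit, which the paper leaves implicit).
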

\begin{proof}
Let $z \in \llbracket0,S-1 \rrbracket$. Since $S \leq k$, we have $z \in \llbracket0,k-1 \rrbracket$. Hence using Lemma \ref{simplif 0}, then \eqref{E ab^T = bar E b,a} and \eqref{produit matrice elementaires}, we have 
\begin{align*}
    C\left(P_{SN}\left(e_z\right)\right)^T S_N^T S_N C\left(P_{SN}\left(e_z\right)\right) &= \left(\sum_{i=0}^{N-1} E_{i,Si+z-r}\right)^T
    \left(\sum_{i'=0}^{N-1} E_{i',Si'+z-r}\right) \\
    &= \left(\sum_{i=0}^{N-1} \overline{E}_{Si+z-r,i}\right)
    \left(\sum_{i'=0}^{N-1} E_{i',Si'+z-r}\right) \\
    &= \sum_{i=0}^{N-1} F_{Si+z-r,Si+z-r} \;.
\end{align*}
Hence
\begin{align*}
    \sum_{z=0}^{S-1} C(P_{SN}(e_z))^T S_N^T S_N C(P_{SN}(e_z)) = \sum_{z=0}^{S-1} \sum_{i=0}^{N-1} F_{Si+z-r,Si+z-r} \;.
\end{align*}
But, for $z \in \llbracket0,S-1 \rrbracket$ and $i \in \llbracket 0,N-1 \rrbracket$, $Si+z-r$ traverses $\llbracket -r,SN-1-r \rrbracket$.
Therefore, using \eqref{def bases pour Z}
\begin{align*}
    \sum_{z=0}^{S-1} C(P_{SN}(e_z))^T S_N^T S_N C(P_{SN}(e_z)) = \sum_{i=-r}^{SN-1-r} F_{i,i} = \sum_{i=0}^{SN-1} F_{i,i} = Id_{SN} \;.
\end{align*}
\end{proof}
\begin{proof}[Proof of Theorem \ref{Prop existence cco}]
Let $N$ be a positive integer such that $ SN \geq k$.\\
We start by proving the theorem in the RO case.\\
\textbf{Suppose $CS \geq M$ and $M \leq Ck$:}\\
Let us exhibit $\noyau \in \RR^{M \times C \times k}$ such that $\Kk \Kk^T = Id_{MN}$.\\
Let $k_S = \min(k,S)$.
Since $M \leq CS$ and $M \leq Ck$, we have $1 \leq M \leq Ck_S$.
Therefore, there exist a unique couple $(i_{max}, j_{max}) \in \llbracket 0,k_S-1 \rrbracket \times \llbracket 1,C \rrbracket$ such that $M = i_{max}C + j_{max}$.
We define the  \Kbfname~  $\noyau \in \mathbb{R}^{M \times C \times k}$ as follows:
For all $(i,j) \in \llbracket 0,k_S-1 \rrbracket \times \llbracket 1,C \rrbracket$ such that $iC+j \leq M$, we set $\noyau_{iC+j,j} = e_i$, and $\noyau_{u,v} = 0$ for all the other indices. Put differently, if we write $\Kbf$ as a 3rd order tensor (where the rows represent the first dimension, the columns the second one, and the $\Kbf_{i,j} \in \mathbb{R}^k$ are in the third dimension) we have :
\begin{align*} 
\noyau 
    = \begin{bmatrix}
    \noyau_{1,1} & \cdots & \noyau_{1,C}  \\
         \vdots & \ddots & \vdots \\
         \noyau_{C,1} & \cdots & \noyau_{C,C} \\
    \noyau_{C+1,1} & \cdots & \noyau_{C+1,C}  \\
         \vdots & \ddots & \vdots \\
         \noyau_{2C,1} & \cdots & \noyau_{2C,C} \\
         & \vdots & \\
    \noyau_{i_{max}C+1,1} & \cdots & \noyau_{i_{max}C+1,C} \\ \vdots & \ddots & \vdots
    \end{bmatrix}
    = \begin{bmatrix}
    e_0 & &  \\ 0& \ddots &0 \\  & & e_0 \\
    e_1 & &  \\ 0& \ddots &0 \\  & & e_1 \\
    & \vdots & \\
    e_{i_{max}} & &  \\ 0& \ddots &0 
    \end{bmatrix}
    \in \mathbb{R}^{M \times C \times k} \;,
\end{align*}
where $e_{i_{max}}$ appears $j_{max}$ times.
Therefore, using \eqref{def mathcal K 1D}, we have
\begin{align*}
    \Kk &= \begin{bmatrix}
    S_N C(P_{SN}(e_0)) & &  \\ 0& \ddots &0 \\  & & S_N C(P_{SN}(e_0)) \\
    S_N C(P_{SN}(e_1)) & &  \\ 0& \ddots &0 \\  & & S_N C(P_{SN}(e_1)) \\
    & \vdots & \\
    S_N C(P_{SN}(e_{i_{max}})) & &  \\ 0& \ddots &0 
    \end{bmatrix} \in \mathbb{R}^{MN \times CSN} \;,
\end{align*}
where $S_N C(P_{SN}(e_{i_{max}}))$ appears $j_{max}$ times.
We have $\Kk = D_{1:MN,:}$, where we set
\begin{align*}
    D &= \begin{bmatrix}
    S_N C(P_{SN}(e_0)) & &  \\ 0& \ddots &0 \\  & & S_N C(P_{SN}(e_0)) \\
    S_N C(P_{SN}(e_1)) & &  \\ 0& \ddots &0 \\  & & S_N C(P_{SN}(e_1)) \\
    & \vdots & \\
    S_N C(P_{SN}(e_{k_S-1})) & &  \\ 0& \ddots &0 \\ & & S_N C(P_{SN}(e_{k_S-1}))
    \end{bmatrix} \in \mathbb{R}^{k_SCN \times CSN} \;.
\end{align*}
But, for $j,l \in \llbracket 0,k_S-1 \rrbracket$, the $(j,l)$-th block of size $(CN,CN)$ of $DD^T$ is :
\begin{align*}
    \begin{bmatrix}
    S_N C(P_{SN}(e_j)) & &  \\ 0& \ddots &0 \\  & & S_N C(P_{SN}(e_j))
    \end{bmatrix}
    \begin{bmatrix}
    C(P_{SN}(e_l))^T S_N^T & &  \\ 0& \ddots &0 \\  & & C(P_{SN}(e_l))^T S_N^T
    \end{bmatrix} \;,
\end{align*}
which is equal to
\begin{align*}
    \begin{bmatrix}
    S_N C(P_{SN}(e_j)) C(P_{SN}(e_l))^T S_N^T & &  \\ 0& \ddots &0 \\  & & S_N C(P_{SN}(e_j)) C(P_{SN}(e_l))^T S_N^T
    \end{bmatrix}.
\end{align*}
Using Lemma \ref{simplif 1}, this is equal to $\delta_{j=l} Id_{CN}$.
Hence, $DD^T = Id_{k_SCN}$, and therefore,
\begin{align*}
    \Kk \Kk^T = D_{1:MN,:} (D_{1:MN,:})^T = (DD^T)_{1:MN,1:MN} = Id_{MN} \;.
\end{align*}
This proves the first implication in the RO case, i.e., if $M \leq Ck$, then $\KbbO_1 \neq \emptyset$.\\

\textbf{Suppose $CS \geq M$ and $M > Ck$:}\\
We need to prove that for all $\noyau \in \RR^{M \times C \times k}$, we have $\Kk \Kk^T \neq Id_{MN}$.\\ 
Since for all $(i,j) \in \llbracket1,M \rrbracket \times \llbracket 1,C \rrbracket$, each of the $ N $ rows of $ S_N C (P_{SN} (\noyau_{i, j})) $ has at most $ k $ non-zero elements, the number of non-zero columns of $ S_N C (P_{SN} (\noyau_{i, j})) $ is less than or equal to $ kN $.
Also, for all $ i, i ' \in \llbracket1,M \rrbracket$, the columns of $ S_N C (P_{SN} (\noyau_{i, j})) $ which can be non-zero are the same as those of $ S_N C (P_ {SN} (\noyau_{i ', j})) $.
Hence, we have for all $ j $, the number of non-zero columns of $\begin{bmatrix}
S_N C(P_{SN}(\noyau_{1,j})) \\ \vdots \\ S_N C(P_{SN}(\noyau_{M,j}))
\end{bmatrix} $ is less than or equal to $kN$.
Therefore, the number of non-zero columns of $ \Kk $ is less than or equal to $ CkN $.
Hence, since $Ck <M$, we have $\rk(\Kk \Kk^T) \leq \rk(\Kk) \leq CkN < MN = \rk(Id_{MN})$.
Therefore, for all $\noyau \in \mathbb{R}^{M \times C \times k}$, we have 
$\Kk \Kk^T \neq Id_{MN} \;.$ \\
This proves that if $CS \geq M$ and $M>Ck$, then $\KbbO_1 = \emptyset$.
This concludes the proof in the RO case.\\

\textbf{Suppose $M \geq CS$ and $S \leq k$:}\\
Let us exhibit $\noyau \in \RR^{M \times C \times k}$ such that $ \Kk^T \Kk = Id_{CSN}$.\\
For all $(i,j) \in \llbracket 0,S-1 \rrbracket \times \llbracket 1,C \rrbracket$, we set $\noyau_{iC+j,j} = e_i$, and $\noyau_{u,v} = 0$  for all the other indices. Put differently, if we write $\Kbf$ as a 3rd order tensor, we have
\begin{align*}
    \noyau
    =
    \begin{bmatrix}
    \noyau_{1,1} & \cdots & \noyau_{1,C}  \\
         \vdots & \ddots & \vdots \\
         \noyau_{C,1} & \cdots & \noyau_{C,C} \\
    \noyau_{C+1,1} & \cdots & \noyau_{C+1,C}  \\
         \vdots & \ddots & \vdots \\
         \noyau_{2C,1} & \cdots & \noyau_{2C,C} \\
         & \vdots & \\
    \noyau_{(S-1)C+1,1} & \cdots & \noyau_{(S-1)C+1,C} \\
    \vdots & \ddots & \vdots \\
    \noyau_{CS,1} & \cdots & \noyau_{CS,C} \\
    \noyau_{CS+1,1} & \cdots & \noyau_{CS+1,C} \\
    \vdots & \vdots & \vdots \\
    \noyau_{M,1} & \cdots & \noyau_{M,C} \\
    \end{bmatrix}
    = \begin{bmatrix}
    e_0 & &  \\ 0 & \ddots & 0 \\  & & e_0 \\
    e_1 & &  \\ 0 & \ddots & 0 \\  & & e_1 \\
    & \vdots & \\
    e_{S-1} & & \\ 0 & \ddots & 0 \\  & & e_{S-1} \\ 
    & & \\
    &  O & \\
    & & 
    \end{bmatrix} \in \mathbb{R}^{M \times C \times k} \;,
\end{align*}
where $O=0_{(M-CS) \times C \times k}$ denotes the null tensor.
Therefore, using \eqref{def mathcal K 1D}, we have
\begin{align*}
    \Kk &= \begin{bmatrix}
    S_N C(P_{SN}(e_0)) & &  \\ 0 & \ddots & 0 \\  & & S_N C(P_{SN}(e_0)) \\
    S_N C(P_{SN}(e_1)) & &  \\ 0 & \ddots & 0 \\  & & S_N C(P_{SN}(e_1)) \\
    & \vdots & \\
    S_N C(P_{SN}(e_{S-1})) & &  \\ 0 & \ddots & 0 \\  & & S_N C(P_{SN}(e_{S-1})) \\
    & \mathcal{O}
    \end{bmatrix} \in \mathbb{R}^{MN \times CSN} \;,
\end{align*}
where $\mathcal{O}=0_{(MN-CSN) \times CSN}$ denotes the null matrix.
Hence, $\Kk^T \Kk$ equals
\begin{align*}
    \begin{bmatrix}
    \sum_{z=0}^{S-1} C(P_{SN}(e_z))^T S_N^T S_N C(P_{SN}(e_z)) & & 0 \\ & \ddots & \\ 0 & & \sum_{z=0}^{S-1} C(P_{SN}(e_z))^T S_N^T S_N C(P_{SN}(e_z)) 
    \end{bmatrix} \;.
\end{align*}
Using Lemma \ref{simplif 2}, we obtain $\Kk^T \Kk = Id_{CSN} \;.$ \\
This proves that in the CO case, if $S \leq k$, then $\KbbO_1 \neq \emptyset$.\\

\textbf{Suppose $M \geq CS$ and $S > k$:}\\
We need to prove that for all $\noyau \in \RR^{M \times C \times k}$, we have $\Kk^T \Kk \neq Id_{CSN}$.\\ 
Following the same reasoning as in the case $ CS \geq M $ and $ M > Ck $, we have that the number of non-zero columns of $ \Kk $ is less than or equal to $ CkN $.
So, since $ k <S $, we have $\rk(\Kk^T\Kk) \leq \rk(\Kk) \leq CkN < CSN = \rk(Id_{CSN})$.
Therefore, for all $ \noyau \in \mathbb{R}^{M \times C \times k} $, we have
$\Kk^T \Kk \neq Id_{CSN} \;.$ \\
This proves that in the CO case, if $k<S$, then $\KbbO_1 = \emptyset$.
This concludes the proof.
\end{proof}

\subsection{Sketch of the Proof of Theorem \ref{Prop existence cco}, for 2D Convolutional Layers}\label{proof2D_exist_sec}
We first set $(e_{i,j})_{i=0..k-1,j=0..k-1} = \Bb(\RR^{k \times k})$.
As in the 1D case, we have the following two lemmas
\begin{lemma}\label{simplif 1 2D}
    Let $k_S = \min(k,S)$ and $j,j',l,l' \in \llbracket0,k_S-1 \rrbracket$.
    We have
\begin{align*}
    \Ss_N \Cc(\Pp_{SN}({e}_{j,j'})) \Cc(\Pp_{SN}({e}_{l,l'}))^T \Ss_N^T = \delta_{j=l}\delta_{j'=l'} Id_{N^2} \;.
\end{align*}
\end{lemma}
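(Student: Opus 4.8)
The plan is to exploit the separability of two-dimensional circular convolution and reduce everything to the one-dimensional Lemma \ref{simplif 1} via Kronecker products. First I would establish three ``separability'' identities relating the 2D objects to their 1D counterparts. The basic one concerns the padding operator: since $e_{j,j'}$ is the $k\times k$ matrix whose only nonzero entry is at position $(j,j')$, the formula $[\Pp_n(h)]_{i,i'} = h_{r-i,r-i'}$ gives $[\Pp_n(e_{j,j'})]_{i,i'} = [P_n(e_j)]_i\,[P_n(e_{j'})]_{i'}$, so that $\Pp_{SN}(e_{j,j'}) = P_{SN}(e_j)\,P_{SN}(e_{j'})^T$ is the rank-one outer product of the two 1D paddings.

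Next I would note that for any rank-one matrix $x = u v^T$ the doubly-block circulant matrix factors as a Kronecker product: by the block definition of $\Cc$ and by \eqref{cond mat circulante}, the $(a,a')$ block of $\Cc(uv^T)$ is $C\bigl((uv^T)_{(a-a')\%n,:}\bigr) = u_{(a-a')\%n}\,C(v) = [C(u)]_{a,a'}\,C(v)$, which is exactly the block structure of $C(u) \otimes C(v)$. Hence $\Cc(\Pp_{SN}(e_{j,j'})) = C(P_{SN}(e_j)) \otimes C(P_{SN}(e_{j'}))$. Finally, because the 2D sampling selects the entries $x_{Si,Sj}$, with the vectorization convention used here it factors as $\Ss_N = S_N \otimes S_N$.

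With these in hand, the computation is pure Kronecker algebra. Using $(A \otimes B)(C \otimes D) = (AC)\otimes(BD)$ and $(A\otimes B)^T = A^T \otimes B^T$, I would write
\[
\Ss_N \Cc(\Pp_{SN}(e_{j,j'})) = \bigl(S_N C(P_{SN}(e_j))\bigr) \otimes \bigl(S_N C(P_{SN}(e_{j'}))\bigr),
\]
and therefore
\begin{align*}
& \Ss_N \Cc(\Pp_{SN}(e_{j,j'})) \Cc(\Pp_{SN}(e_{l,l'}))^T \Ss_N^T \\
& \quad = \bigl(S_N C(P_{SN}(e_j)) C(P_{SN}(e_l))^T S_N^T\bigr) \\
& \qquad \otimes \bigl(S_N C(P_{SN}(e_{j'})) C(P_{SN}(e_{l'}))^T S_N^T\bigr).
\end{align*}
Each factor is a 1D expression covered by Lemma \ref{simplif 1}: the hypothesis $j,j',l,l' \in \llbracket 0,k_S-1\rrbracket$ with $k_S = \min(k,S)$ is precisely what that lemma requires, so the first factor equals $\delta_{j=l}\,Id_N$ and the second $\delta_{j'=l'}\,Id_N$. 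The Kronecker product then collapses to $\delta_{j=l}\delta_{j'=l'}\,(Id_N \otimes Id_N) = \delta_{j=l}\delta_{j'=l'}\,Id_{N^2}$, which is the claim.

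The main obstacle will be the bookkeeping around the vectorization convention: getting the order of the Kronecker factors to match the convention implicit in the definitions of $\Cc$ and $\Ss_N$, and checking that the periodization index conventions of \eqref{def bases pour Z} carry over unchanged to the 2D bases. Once the three separability identities are verified against those conventions, no further analytic work is needed, which is exactly why the 2D argument can be given as a short sketch built on the 1D lemma.
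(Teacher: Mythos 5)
Your proof is correct, but it takes a genuinely different route from the paper's. The paper only sketches the 2D case: it states Lemma \ref{simplif 1 2D} and intends the reader to repeat, in 2D notation, the same index-by-index computation used for Lemma \ref{simplif 1} (i.e.\ a 2D analogue of Lemma \ref{simplif 0}, expressing $\Ss_N \Cc(\Pp_{SN}(e_{j,j'}))$ as a sum of elementary matrices and multiplying out). You instead reduce the 2D statement to the already-proved 1D lemma via three separability identities, $\Pp_{SN}(e_{j,j'}) = P_{SN}(e_j)P_{SN}(e_{j'})^T$, $\Cc(uv^T) = C(u)\otimes C(v)$, and $\Ss_N = S_N\otimes S_N$, followed by pure Kronecker algebra. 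All three identities check out against the paper's definitions: the first follows from the pointwise formula for $\Pp_n$ applied to $e_{j,j'}$; the second because the $(a,a')$ block of $\Cc(uv^T)$ is $C(u_{(a-a')\%n}\,v) = [C(u)]_{a,a'}\,C(v)$ by \eqref{cond mat circulante} and linearity of $C$; the third because, with the row-wise vectorization implicit in the block structure of $\Cc$, the block $(i,i')$ of $\Ss_N$ is $\delta_{i'=Si}S_N = [S_N]_{i,i'}S_N$. The hypotheses $j,j',l,l' \in \llbracket 0,k_S-1\rrbracket$ are exactly those of Lemma \ref{simplif 1} for each Kronecker factor, so the conclusion follows. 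What your approach buys: it converts the paper's proof-by-analogy into an actual deduction from the 1D result, and the same factorization would also dispose of the companion 2D lemma used in the CO case (sum over $z,z'$ of $\Cc^T\Ss_N^T\Ss_N\Cc$ splits as a product of two 1D sums, each equal to $Id_{SN}$ by Lemma \ref{simplif 2}). What the paper's approach buys: it stays entirely inside 2D notation and so never touches the vectorization-convention bookkeeping you rightly identify as the one delicate point; note that even under the opposite (column-stacking) convention your two Kronecker factors would merely swap places, leaving the conclusion unchanged since both factors have the same form.
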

\begin{lemma}
Let $S \leq k $.
We have
\begin{align*}
    \sum_{z=0}^{S-1} \sum_{z'=0}^{S-1} \Cc(\Pp_{SN}({e}_{z,z'}))^T \Ss_N^T \Ss_N \Cc(\Pp_{SN}({e}_{z,z'})) = Id_{S^2N^2} \;.
\end{align*}
\end{lemma}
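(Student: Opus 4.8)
The plan is to reduce this two-dimensional identity to its one-dimensional counterpart, Lemma \ref{simplif 2}, by exploiting the separable structure of the canonical basis elements $e_{z,z'}$. First I would observe that, viewed as a matrix, $e_{z,z'} = e_z e_{z'}^T$ is rank one; checking the defining formula of $\Pp_{SN}$ against that of $P_{SN}$ then gives the factorization $\Pp_{SN}(e_{z,z'}) = P_{SN}(e_z)\, P_{SN}(e_{z'})^T$ for all $z,z' \in \llbracket 0,S-1 \rrbracket$. (The hypothesis $S \leq k$ is used here, as it guarantees $z,z' \in \llbracket 0,k-1\rrbracket$, so that $e_z,e_{z'}$ are genuine basis vectors of $\RR^k$.) Because $\Cc$ assembles the row-circulants $C(x_{i,:})$ in a circulant block pattern and $C$ is linear in its argument, the separability propagates to $\Cc(\Pp_{SN}(e_{z,z'})) = C(P_{SN}(e_z)) \otimes C(P_{SN}(e_{z'}))$. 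With the same vectorization convention, writing the sampling action at the matrix level as $x \mapsto S_N x\, S_N^T$ yields $\Ss_N = S_N \otimes S_N$.

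Next I would substitute these factorizations into a generic summand and apply the Kronecker mixed-product rule $(A\otimes B)(C\otimes D) = (AC)\otimes(BD)$ together with $(A\otimes B)^T = A^T\otimes B^T$, obtaining
\[
\Cc(\Pp_{SN}(e_{z,z'}))^T \Ss_N^T \Ss_N\, \Cc(\Pp_{SN}(e_{z,z'})) = A_z \otimes B_{z'},
\]
where $A_z = C(P_{SN}(e_z))^T S_N^T S_N\, C(P_{SN}(e_z))$ and $B_{z'} = C(P_{SN}(e_{z'}))^T S_N^T S_N\, C(P_{SN}(e_{z'}))$. Since the Kronecker product is bilinear and the two summation indices are independent, the double sum splits,
\[
\sum_{z=0}^{S-1}\sum_{z'=0}^{S-1} A_z \otimes B_{z'} = \Bigl(\sum_{z=0}^{S-1} A_z\Bigr) \otimes \Bigl(\sum_{z'=0}^{S-1} B_{z'}\Bigr),
\]
and Lemma \ref{simplif 2} identifies each of the two factors with $Id_{SN}$. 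Hence the expression equals $Id_{SN} \otimes Id_{SN} = Id_{S^2N^2}$, which is the desired identity.

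The main difficulty is not conceptual but bookkeeping: establishing the two Kronecker factorizations rigorously and fixing a single vectorization convention (row- versus column-major) so that the factors of $\Cc(\Pp_{SN}(\cdot))$ and of $\Ss_N$ are ordered consistently. Once the convention is fixed---already implicit in the block-row definition of $\Cc$---the ordering is immaterial to the conclusion, since both factors collapse to $Id_{SN}$. An alternative that stays closer to the 1D argument would avoid Kronecker notation altogether and instead replay the index computation of Lemma \ref{simplif 2} with doubly-indexed elementary matrices: one expands $\Ss_N\,\Cc(\Pp_{SN}(e_{z,z'}))$ as in the 2D analog of Lemma \ref{simplif 0}, computes the product as a sum of diagonal elementary matrices indexed by $(Si+z-r,\,Sj+z'-r)$, and checks that summing over $z,z',i,j$ sweeps out every diagonal entry of $\RR^{S^2N^2 \times S^2N^2}$ exactly once. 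I would write up the Kronecker route, as it is the shorter and more transparent of the two.
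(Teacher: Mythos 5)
Your proof is correct, but it takes a genuinely different route from the paper's. The paper handles this 2D lemma by replaying the 1D elementary-matrix computation with doubly-indexed bases --- essentially the ``alternative'' you sketch at the end: expand $\Ss_N \Cc(\Pp_{SN}(e_{z,z'}))$ via the 2D analogue of Lemma \ref{simplif 0}, obtain a sum of diagonal elementary matrices indexed by pairs $(Si+z-r,\,Sj+z'-r)$, and check that summing over $z,z',i,j$ sweeps every diagonal entry of $\RR^{S^2N^2\times S^2N^2}$ exactly once; this is why the paper merely states the lemma and says ``we proceed as in the 1D case.'' Your Kronecker route instead derives the 2D statement as a corollary of the 1D Lemma \ref{simplif 2}: the separability $\Pp_{SN}(e_{z,z'})=P_{SN}(e_z)P_{SN}(e_{z'})^T$, the factorizations $\Cc(uv^T)=C(u)\otimes C(v)$ and $\Ss_N=S_N\otimes S_N$, the mixed-product rule, and the splitting of the double sum into a Kronecker product of two single sums are all valid under the paper's definitions, and the hypothesis $S\leq k$ enters exactly where it should (so that $e_z,e_{z'}$ are genuine basis vectors of $\RR^k$ and the 1D lemma applies to each factor). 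What your approach buys is reuse and brevity: the 2D lemma becomes a short corollary of the 1D one, and the same factorization would also yield the companion Lemma \ref{simplif 1 2D} for free. What the paper's approach buys is self-containedness: it never needs a vectorization convention to be pinned down, whereas your argument must fix one --- a point you address correctly, since whichever convention is chosen, both Kronecker factors collapse to $Id_{SN}$ and the ordering ambiguity is harmless.
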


\textbf{For $CS^2 \geq M$ and $M \leq Ck^2$:}\\
We set $\overline{e}_{i+kj} = e_{i,j}$ for $i,j \in \llbracket0,k-1\rrbracket$.\\
Let $i_{max},j_{max} \in \llbracket 0,k_S^2-1 \rrbracket \times \llbracket 1,C \rrbracket$ such that $i_{max}C + j_{max} = M$.
We set 
\begin{align*} 
\noyau 
    = \begin{bmatrix}
    \noyau_{1,1} & \cdots & \noyau_{1,C}  \\
         \vdots & \ddots & \vdots \\
         \noyau_{C,1} & \cdots & \noyau_{C,C} \\
    \noyau_{C+1,1} & \cdots & \noyau_{C+1,C}  \\
         \vdots & \ddots & \vdots \\
         \noyau_{2C,1} & \cdots & \noyau_{2C,C} \\
         & \vdots & \\
    \noyau_{i_{max}C+1,1} & \cdots & \noyau_{i_{max}C+1,C} \\ \vdots & \ddots & \vdots
    \end{bmatrix}
    = \begin{bmatrix}
    \overline{e}_0 & &  \\ 0& \ddots &0 \\  & & \overline{e}_0 \\
    \overline{e}_1 & &  \\ 0& \ddots &0 \\  & & \overline{e}_1 \\
    & \vdots & \\
    \overline{e}_{i_{max}} & &  \\ 0& \ddots &0 
    \end{bmatrix}
    \in \mathbb{R}^{M \times C \times k \times k} \;,
\end{align*}
where $\overline{e}_{i_{max}}$ appears $j_{max}$ times.
Then we proceed as in the 1D case.\\

\textbf{For $CS^2 \geq M$ and $M > Ck^2$:}\\
Using the same argument as in 1D, we can conclude that the number of non-zero columns of $\Kk$ is less than or equal to $Ck^2N^2$.
Hence, $\rk(\Kk) \leq Ck^2N^2 < MN^2$. Therefore, for all $\noyau \in \mathbb{R}^{M \times C \times k \times k}$, we have 
$\Kk \Kk^T \neq Id_{MN^2} \;.$

\textbf{For $M \geq CS^2$ and $S \leq k$:}\\
Denoting by $O \in \mathbb{R}^{(M-CS^2) \times C \times k \times k}$ the null 4th order tensor of size $(M-CS^2) \times C \times k \times k$,
we set
\begin{align*}
    \noyau 
    =
    \begin{bmatrix}
    \noyau_{1,1} & \cdots & \noyau_{1,C}  \\
         \vdots & \ddots & \vdots \\
         \noyau_{C,1} & \cdots & \noyau_{C,C} \\
    \noyau_{C+1,1} & \cdots & \noyau_{C+1,C}  \\
         \vdots & \ddots & \vdots \\
         \noyau_{2C,1} & \cdots & \noyau_{2C,C} \\
         & \vdots & \\
    \noyau_{C(S^2-1)+1,1} & \cdots & \noyau_{C(S^2-1)+1,C} \\
    \vdots & \ddots & \vdots \\
    \noyau_{CS^2,1} & \cdots & \noyau_{CS^2,C} \\
    \noyau_{CS^2+1,1} & \cdots & \noyau_{CS^2+1,C} \\
    \vdots & \vdots & \vdots \\
    \noyau_{M,1} & \cdots & \noyau_{M,C} \\
    \end{bmatrix}
    = \begin{bmatrix}
    {e}_{0,0} & &  \\ 0 & \ddots & 0  \\  & & {e}_{0,0} \\
    {e}_{1,0} & &  \\ 0 & \ddots & 0 \\  & & {e}_{1,0} \\
    & \vdots & \\
    {e}_{S-1,S-1} & & \\ 0 & \ddots & 0 \\  & & {e}_{S-1,S-1} \\
    & & \\
    &  O & \\
    & & 
    \end{bmatrix} \in \mathbb{R}^{M \times C \times k \times k} \;.
\end{align*}
Then we proceed as in the 1D case.\\

\textbf{For $M \geq CS^2$ and $S > k$:}\\
By the same reasoning as in the 1D case, we have that the number of non-zero columns of $ \Kk $ is less than or equal to $ Ck^2N^2 $.
So, since $ k<S $, we have $ \rk(\Kk) \leq Ck^2N^2 < CS^2N^2 $.
Therefore, for all $ \noyau \in \mathbb{R}^{M \times C \times k \times k} $, we have
$\Kk^T \Kk \neq Id_{CS^2N^2} \;.$

\section{Restrictions due to Boundary Conditions}
In this section, we prove the theorems related to 'valid' and 'same' boundary conditions.

\subsection{Proof of Proposition \ref{valid}}\label{Proof of proposition valid}
\begin{proof}
For a single-channel convolution of kernel $h \in \RR^k$ with 'valid' padding, the matrix applying the transformation on a signal $x \in \RR^N$ has the following form:
\begin{align*}
    A_N(h) :=\left( \begin{array}{c c c c c c}
       h_0  & \cdots & h_{2r} & & &0 \\
        & \ddots & \ddots & \ddots &  \\
        & & \ddots & \ddots & \ddots & \\
        0& & & h_0  & \cdots & h_{2r}
        \end{array}
        \right) \in \RR^{(N-k+1) \times N} \;.
\end{align*}
Hence, for $\noyau \in \RR^{M \times C \times k}$, the \Kcalname~is:
\begin{align*}
    \Kk = \left(
\begin{array}{c c c}
  A_N(\noyau_{1,1}) & \ldots & A_N(\noyau_{1,C}) \\
  \vdots & \vdots & \vdots \\
  A_N(\noyau_{M,1}) &\ldots & A_N(\noyau_{M,C})
\end{array}
\right) \in \mathbb{R}^{M(N-k+1) \times CN} \;.
\end{align*}
Let us focus on the columns corresponding to the first input channel.
To simplify the notation, for $m \in \llbracket 1,M \rrbracket$ we denote by $a^{(m)} := \noyau_{m,1} \in \RR^k$.
By contradiction, suppose that $\Kk^T \Kk = Id_{CN}$. In particular,
for the first block matrix of size $M(N-k+1) \times N$ of $\Kk$ (i.e., corresponding to the first input channel),
its first column, last column and column of index $2r$ are of norm 1.
Since $N \geq 2k-1$, we have 
\begin{align*}
        \sum_{m=1}^M \left(a^{(m)}_0\right)^2 = 1 , \qquad 
        \sum_{m=1}^M \left(a^{(m)}_{2r}\right)^2 = 1 \qquad \text{and} \qquad
        \sum_{i=0}^{2r} \sum_{m=1}^M \left(a^{(m)}_{i}\right)^2  = 1 \;.
\end{align*}
This is impossible.
Therefore, for all $\noyau \in \RR^{M \times C \times k}$, we have $\Kk^T \Kk \neq Id_{CN}$.\\
\end{proof}

\subsection{Proof of Proposition \ref{same}} \label{Proof of Proposition same}
\begin{proof}
For a single-channel convolution of kernel $h \in \RR^k$ with zero-padding 'same', the matrix applying the transformation on a signal $x \in \RR^N$ has the following form:
\begin{align*}
    A_N(h) :=\left( \begin{array}{c c c c c c}
       h_r  & \cdots & h_{2r} & 0 & \cdots & 0 \\
        \vdots & \ddots & \ddots & \ddots & \ddots & \vdots  \\
        h_0 & \ddots & \ddots & \ddots & \ddots & 0 \\
        0& \ddots & \ddots & \ddots  & \ddots & h_{2r} \\
        \vdots & \ddots & \ddots & \ddots  & \ddots & \vdots \\
        0 & \cdots & 0 & h_0 & \cdots & h_r
        \end{array}
        \right) \in \RR^{N \times N} \;.
\end{align*}
Hence, for $\noyau \in \RR^{M \times C \times k}$, the matrix that applies the convolutional layer is :
\begin{align*}
    \Kk = \left(
\begin{array}{c c c}
  A_N(\noyau_{1,1}) & \ldots & A_N(\noyau_{1,C}) \\
  \vdots & \vdots & \vdots \\
  A_N(\noyau_{M,1}) &\ldots & A_N(\noyau_{M,C})
\end{array}
\right) \in \mathbb{R}^{MN \times CN} \;.
\end{align*}
\textbf{Suppose $M \leq C$ (RO case):} If $\Kk$ is orthogonal, then $\Kk \Kk^T = Id_{MN}$.
Let us fix $m \in \llbracket1,M\rrbracket $.
Since $\Kk \Kk^T = Id_{MN}$, the first row, the last row and the row of index $r$ of the $m$-th block matrix of size $N \times CN$ of $\Kk$ are of norm equal to 1, i.e.
\begin{align*}
        \|\Kk_{(m-1)N,:}\|_2^2 = 1 , \qquad 
        \|\Kk_{mN-1,:}\|_2^2 = 1 \qquad \text{and} \qquad
        \|\Kk_{(m-1)N+r,:}\|_2^2 = 1 \;.
\end{align*}
To simplify the notation, for $c \in \llbracket 1,C \rrbracket$, we denote by $a^{(c)} := \noyau_{m,c} \in \RR^k$.
Since $N \geq k$, the previous equations are equivalent to
\begin{align*}
         \sum_{i=r}^{2r} \sum_{c=1}^C \left(a^{(c)}_i\right)^2 = 1 , \qquad 
        \sum_{i=0}^{r} \sum_{c=1}^C \left(a^{(c)}_{i}\right)^2 = 1 \qquad \text{and} \qquad
        \sum_{i=0}^{2r} \sum_{c=1}^C \left(a^{(c)}_{i}\right)^2  = 1 \;.
\end{align*}
Substracting the first equality from the third one, and the second equality from the third one, we obtain
\begin{align*}
         \sum_{i=0}^{r-1} \sum_{c=1}^C \left(a^{(c)}_i\right)^2 = 0 , \qquad 
        \sum_{i=r+1}^{2r} \sum_{c=1}^C \left(a^{(c)}_{i}\right)^2 = 0 \qquad \text{and} \qquad
        \sum_{i=0}^{2r} \sum_{c=1}^C \left(a^{(c)}_{i}\right)^2  = 1 \;.
\end{align*}
This implies that for all $c \in \llbracket 1,C \rrbracket$, for all $i \in \llbracket 0,2r \rrbracket \setminus \{r\}$, $a^{(c)}_{i} = 0$.\\
As a conclusion, for any $m \in \llbracket 1,M \rrbracket$, any $c \in \llbracket 1,C \rrbracket$, and any $i \in \llbracket 0,2r \rrbracket \setminus \{r\}$,
$$\noyau_{m,c,i} = 0.$$
This proves the result in the RO case.\\
The proof of the CO case is similar, and we have the same conclusion.
\end{proof}

\section{Proof of Theorem \ref{prop norme Frobenius}}\label{proof prop norme Frobenius}
As in Section \ref{proof Prop existence cco}, we give the full proof in the 1D case and a sketch of proof in the 2D case.

In the RO case, the proof is based on calculations in which we carefully detail the structure of the matrix $ \Kk \Kk^T - Id_{MN} $ and identify its constituent with those of $L_{orth}(\Kbf)$. The main lemma describing the structure of $ \Kk \Kk^T - Id_{MN} $ is  Lemma \ref{simplif Kk Kk^T - Id_MN}. It is deduced from Lemma \ref{S_N C(x) S_N^T = C(S_Nx)} and Lemma \ref{S_N C C S_N^T = C Q_N conv} which focus on submatrices of $ \Kk \Kk^T - Id_{MN} $.

The result in the CO case is obtained from the result in the RO case and a known relation between $ \|\Kk \Kk^T - Id_{MN}\|_F^2 $ and 
$\|\Kk^T \Kk - Id_{CSN}\|_F^2 $, see for instance Lemma 1 in \citet{wang2020orthogonal}.

\subsection{Proof of Theorem \ref{prop norme Frobenius}, in the 1D Case}
Before proving Theorem \ref{prop norme Frobenius}, we first present three intermediate lemmas.
\begin{lemma}\label{S_N C(x) S_N^T = C(S_Nx)}
    Let $x \in \mathbb{R}^{SN}$.
    We have 
    $$ S_N C(x) S_N^T = C(S_N x) \;. $$
\end{lemma}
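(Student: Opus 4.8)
The plan is to prove the identity entrywise: for $m,l\in\llbracket 0,N-1\rrbracket$ I would compute the $(m,l)$ coefficient of each side and check that they coincide. The whole content of the lemma then reduces to a single elementary fact about Euclidean division.

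First I would record how $\Sn$ selects coordinates. From \eqref{def S_N} we have $(\Sn)_{m,a}=\delta_{a=Sm}$, and by \eqref{E ab^T = bar E b,a} its transpose $\Sn^T=\sum_{l=0}^{N-1}\overline{E}_{Sl,l}$ satisfies $(\Sn^T)_{b,l}=\delta_{b=Sl}$. Hence
\[
\left(\Sn C(x)\Sn^T\right)_{m,l}=\sum_{a,b}(\Sn)_{m,a}\,C(x)_{a,b}\,(\Sn^T)_{b,l}=C(x)_{Sm,Sl},
\]
which by \eqref{cond mat circulante} (with $n=SN$) equals $x_{(Sm-Sl)\%SN}=x_{(S(m-l))\%SN}$. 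For the right-hand side, the definition $(\Sn x)_m=x_{Sm}$ together with \eqref{cond mat circulante} gives $C(\Sn x)_{m,l}=(\Sn x)_{(m-l)\%N}=x_{S((m-l)\%N)}$.

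So the statement is \emph{exactly} the arithmetic identity $(S(m-l))\%SN=S\big((m-l)\%N\big)$, and this is the only step that requires any thought. I would settle it by Euclidean division: writing $m-l=qN+s$ with $s=(m-l)\%N\in\llbracket 0,N-1\rrbracket$, one gets $S(m-l)=q(SN)+Ss$, where $Ss\in\llbracket 0,S(N-1)\rrbracket\subseteq\llbracket 0,SN-1\rrbracket$ since $S\geq 1$; thus $Ss$ is precisely the remainder of $S(m-l)$ modulo $SN$. This shows the two entries agree, hence $\Sn C(x)\Sn^T=C(\Sn x)$.

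An alternative, more in the algebraic style used in the preceding lemmas, would be to expand $C(x)=\sum_{j=0}^{SN-1}x_j\sum_i F_{i,i-j}$, use $\Sn=\sum_a E_{a,Sa}$ and $\Sn^T=\sum_b\overline{E}_{Sb,b}$, and collapse the products with the rules \eqref{produit matrice elementaires} and the transpose rule \eqref{E ab^T = bar E b,a}. This route produces the coefficient of $G_{a,b}$ as a sum over $j$ constrained by $j\equiv S(a-b)\pmod{SN}$, which again forces $j=S((a-b)\%N)$ through the same modular identity. Since both paths hinge on that one fact, I expect the entrywise computation to be the most economical, with the modular identity being the sole genuine obstacle.
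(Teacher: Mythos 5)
Your proof is correct and takes essentially the same route as the paper's: both compute the $(m,l)$ entry of $S_N C(x) S_N^T$ as $C(x)_{Sm,Sl} = x_{(Sm-Sl)\%SN}$ using the sampling structure of $S_N$ and the circulant characterization \eqref{cond mat circulante}, then identify it with $C(S_Nx)_{m,l}$ via the modular identity $(S(m-l))\%SN = S\bigl((m-l)\%N\bigr)$. The only difference is that you justify this identity explicitly by Euclidean division, whereas the paper asserts it without comment — a welcome extra detail, not a divergence in method.
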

\begin{proof}
Let $x \in \mathbb{R}^{SN}$, $X = C(x)$ and
$Y = S_N X S_N^T \in \mathbb{R}^{N \times N}$.
The matrix $Y$ is formed by sampling $X$, i.e., for all $m,n \in \llbracket0,N-1\rrbracket$,
$$ Y_{m,n} = X_{Sm,Sn} .$$
Hence, using \eqref{cond mat circulante}, $Y_{m,n} = x_{(Sm-Sn)\%SN } = x_{S((m-n)\%N)}$.
Setting $y = S_N x$, we have $y_l = x_{Sl}$ for all $l \in \llbracket0,N-1\rrbracket$.
Therefore,
$Y_{m,n} = y_{(m-n)\%N}$, and using \eqref{cond mat circulante}, we obtain $Y=C(y)$.
Hence, from the definitions of $Y$, $X$ and $y$ we conclude that 
\begin{align*}
    S_N C(x) S_N^T = C(S_N x) \;.
\end{align*}
This completes the proof of the lemma.
\end{proof}

For $N$ such that $SN \geq {2k-1}$, and $P = \left\lfloor \frac{k-1}{S} \right\rfloor S$, we introduce the operator $Q_{S,N}$ which associates to a vector $x = (x_0, \ldots , x_{2 \frac{P}{S}})^T \in \mathbb{R}^{2 \frac{P}{S}+1}$, the vector
\begin{align}\label{def Q_N}
    Q_{S,N}(x) = (x_{\frac{P}{S}}, \ldots, x_{2 \frac{P}{S}}, 0,\ldots, 0, x_0, x_1,  \ldots, x_{\frac{P}{S}-1})^T \in \mathbb{R}^N .
\end{align}

\begin{lemma}\label{S_N C C S_N^T = C Q_N conv}
    Let $S$, $k=2r+1$ and $N$ be positive integers such that $SN \geq {2k-1}$.
    Let $h,g \in \mathbb{R}^k$ and $P = \left\lfloor \frac{k-1}{S} \right\rfloor S$, we have
    \begin{align}\label{eq lemma S_N C C S_N^T = C Q_N conv}
        S_N C(P_{SN}(h)) C(P_{SN}(g))^T S_N^T 
        &= C(Q_{S,N}(\conv(h,g,\text{padding zero} = P, \text{stride} = S))) \;.
    \end{align}
\end{lemma}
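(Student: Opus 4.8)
The plan is to reduce the whole identity to the algebra of circulant matrices followed by one aliasing-free index computation. First I would dispose of the transpose and the product on the left-hand side: by \eqref{transpose circulant matrix} we have $C(P_{SN}(g))^T = C(\widetilde{P_{SN}(g)})$, and by \eqref{product circulant matrices} the product of two circulant matrices is again circulant, so $C(P_{SN}(h)) C(P_{SN}(g))^T = C\!\left(P_{SN}(h) * \widetilde{P_{SN}(g)}\right)$, where $*$ is the circular convolution of \eqref{conv circulaire formule}. Setting $w := P_{SN}(h) * \widetilde{P_{SN}(g)} \in \RR^{SN}$, Lemma \ref{S_N C(x) S_N^T = C(S_Nx)} gives immediately
\[
S_N C(P_{SN}(h)) C(P_{SN}(g))^T S_N^T = C(S_N w).
\]
Since both sides of the target identity are circulant, by \eqref{cond mat circulante} it suffices to prove the vector identity $S_N w = Q_{S,N}(\conv(h,g,\text{padding zero} = P, \text{stride} = S))$.

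The heart of the argument is to compute $w$ explicitly. Using the formula \eqref{eq def P_n} for $P_{SN}$ and the definition of $\widetilde{\,\cdot\,}$, and substituting $i'=r-i$, I would obtain for every $j$
\[
w_j = \sum_{i'=0}^{k-1} h_{i'}\, [P_{SN}(g)]_{(r - i' - j)\%SN} = \sum_{t \in \mathbb{Z}} c_{j + tSN}, \qquad c_i := \sum_{i'=0}^{k-1} h_{i'}\, g_{i' + i},
\]
so that $w$ is the $SN$-periodisation of the full (unflipped) correlation $c$, which is supported on $\llbracket -2r, 2r\rrbracket$, a window of width $2k-1$. This is exactly where the hypothesis $SN \geq 2k-1$ enters: it forces the periodisation sum to contain at most one nonzero term, so for $j \in \llbracket 0, SN-1\rrbracket$ one has $w_j = c_j$ when $0 \le j \le 2r$, $w_j = c_{j - SN}$ when $SN - 2r \le j \le SN - 1$, and $w_j = 0$ otherwise. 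Reading \eqref{conv h g P 1 formule} as $[\conv(h,g,\text{padding zero}=P,\text{stride}=1)]_i = c_{i - P}$ then records the identification with $\conv$.

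Finally I would sample and match against $Q_{S,N}$. Writing $[S_N w]_l = w_{Sl}$ and recalling that $P/S = \lfloor (k-1)/S\rfloor$ is an integer, I split $l \in \llbracket 0, N-1\rrbracket$ into the three ranges of \eqref{def Q_N}. For $l \in \llbracket 0, P/S\rrbracket$ one has $Sl \le P \le 2r$, hence $w_{Sl} = c_{Sl} = c_{S(l+P/S)-P} = [\conv(h,g,\text{padding zero}=P,\text{stride}=S)]_{l + P/S}$ by \eqref{def conv S}; for $l \in \llbracket N - P/S, N-1\rrbracket$ one has $Sl \ge SN - P \ge SN - 2r$, hence $w_{Sl} = c_{Sl - SN} = [\conv(h,g,\text{padding zero}=P,\text{stride}=S)]_{l - N + P/S}$; and for the middle range the bound $P + S > k-1 = 2r$ places $Sl$ strictly between $2r$ and $SN - 2r$, so $w_{Sl} = 0$. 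These three cases are precisely the three blocks of $Q_{S,N}$, which settles the vector identity. The main obstacle is the bookkeeping in this last step: both the disjointness of the windows $\llbracket 0, P/S\rrbracket$ and $\llbracket N - P/S, N-1\rrbracket$ and the vanishing on the middle range rely on squeezing $SN \geq 2k-1$ into the integer inequality $N \ge 2P/S + 1$, so the delicate part is verifying these floor-function bounds rather than any conceptual difficulty; the circulant-algebra reductions of the first paragraph are routine given the earlier lemmas.
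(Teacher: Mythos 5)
Your proposal is correct and takes essentially the same route as the paper's proof: both reduce the matrix identity to a vector identity via the circulant algebra (\eqref{transpose circulant matrix}, \eqref{product circulant matrices}) and Lemma \ref{S_N C(x) S_N^T = C(S_Nx)}, compute the kernel of $C(P_{SN}(h))C(P_{SN}(g))^T$ as the correlation of $h$ and $g$, invoke $SN \geq 2k-1$ to rule out wraparound, and finish by sampling with stride $S$ and matching the block layout of $Q_{S,N}$. Your periodisation phrasing of the middle step is simply a compact restatement of the paper's explicit three-case analysis of $b_j$, so the two arguments coincide in substance.
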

\begin{proof}
Let $N$ be such that $SN \geq {2k-1}$, and $P = \left\lfloor \frac{k-1}{S} \right\rfloor S$.
Let us first detail and analyse the left-hand side of \eqref{eq lemma S_N C C S_N^T = C Q_N conv}.
Recall that by definition $P_{SN}(h)$ is $SN$-periodic: $[P_{SN}(h)]_i = [P_{SN}(h)]_{i\%SN}$ for all $i \in \mathbb{Z}$.
Using \eqref{transpose circulant matrix}, \eqref{product circulant matrices}, and \eqref{conv circulaire formule}, we have
\begin{align*}
    C(P_{SN}(h)) C(P_{SN}(g))^T &= C(P_{SN}(h)) C(\widetilde{P_{SN}(g)}) \\
    &= C\left(\left(\sum_{i=0}^{SN-1} {[P_{SN}(h)]}_i {\left[ \widetilde{P_{SN}(g)}\right]}_{j-i}\right)_{j=0..SN-1}\right) \\
    &= C\left(\left(\sum_{i=0}^{SN-1} {[P_{SN}(h)]}_{i} {[P_{SN}(g)]}_{i-j}\right)_{j=0..SN-1}\right) \;.
\end{align*}
Setting  $b^{(SN)}[{h},{g}] = \left(\sum_{i=0}^{SN-1} {[P_{SN}(h)]}_{i} {[P_{SN}(g)]}_{i-j}\right)_{j=0..SN-1} \;,$
we have
\begin{align}\label{eq verifie par c(h,g)}
    C(P_{SN}(h)) C(P_{SN}(g))^T 
    &= C(b^{(SN)}[{h},{g}]) \;. 
\end{align}
To simplify the forthcoming notation, we temporarily denote by 
\begin{align}\label{b = b sn h g}
    b:= b^{(SN)}[{h},{g}].    
\end{align}
Notice that by definition, $b$ is $SN$-periodic.
Therefore, we can restrict its study to an interval of size $ SN $.
We consider $j \in \llbracket-2r,SN-2r-1 \rrbracket$.
From the definition of $P_{SN}$ in \eqref{eq def P_n}, we have, for $i \in \llbracket -r, -r+SN-1 \rrbracket$,
\begin{align}\label{P_SN (h) _i}
[P_{SN}(h)]_i = \left\{ \begin{array}{l l l}
h_{r-i} & \text{ if } & i \in \llbracket-r,r \rrbracket \\
0 & \text{ if } & i \in \llbracket r+1,-r+SN-1 \rrbracket \;.
\end{array}\right.
\end{align}
Hence, since $P_{SN}(h)$ and $P_{SN}(g)$ are periodic, we have 
\begin{align}
    b_j &= \sum_{i=0}^{SN-1} {[P_{SN}(h)]}_{i} {[P_{SN}(g)]}_{i-j} \nonumber \\
    &= \sum_{i=-r}^{SN-1-r} {[P_{SN}(h)]}_{i} {[P_{SN}(g)]}_{i-j} \nonumber \\
    &= \sum_{i=-r}^{r} {[P_{SN}(h)]}_{i} {[P_{SN}(g)]}_{i-j}. \label{c_j intermediaire}
\end{align}
The set of indices $i \in \llbracket -r,r \rrbracket$ such that ${[P_{SN}(h)]}_{i} {[P_{SN}(g)]}_{i-j} \neq 0$ is included in $\llbracket-r,r \rrbracket \cap \{i | (i-j)\%SN \in \llbracket-r,r \rrbracket \%SN \}$. \\
Since $j \in \llbracket -2r, SN-2r-1 \rrbracket$:
We have $-r \leq i \leq r$ and $-2r \leq j \leq SN-2r-1$, then $-SN+r+1 \leq i-j \leq 3r$, but by hypothesis, $SN \geq 2k-1 = 4r+1$, hence $3r < SN-r$ and so $-SN+r < i-j < SN-r$.
Therefore, for $i \in \llbracket-r,r \rrbracket$ and $j \in \llbracket-2r,SN-2r-1 \rrbracket$
\begin{align*}
    (i-j)\%SN \in (\llbracket-r,r \rrbracket \% SN) \iff i-j \in \llbracket-r,r \rrbracket \iff i \in \llbracket -r+j,r+j \rrbracket .
\end{align*}
As a conclusion, for $j \in \llbracket -2r,SN-2r-1 \rrbracket$,
\begin{align}\label{(eq 1 )}
    \left\{i \in \llbracket-r,r \rrbracket \quad | \quad  {[P_{SN}(h)]}_{i} {[P_{SN}(g)]}_{i-j} \neq 0 \right\} \subset \llbracket-r,r \rrbracket \cap \llbracket -r+j, r+j \rrbracket.
\end{align}
Let us now analyse the right-side of \eqref{eq lemma S_N C C S_N^T = C Q_N conv}.
We start by considering zero-padding $= k-1$ and stride $= 1$, and we will arrive to the formula with zero-padding $= P$ and stride $= S$ later.
Using \eqref{conv in R^2P+1}, we denote by
\begin{align}\label{def a}
a = \conv(h,g,\text{padding zero} = k-1, \text{stride} = 1) \in \RR^{2k-1} .
\end{align}
We have from \eqref{conv h g P 1 formule}, for $j \in \llbracket0,2k-2 \rrbracket$,
\begin{align*}
    a_j = \sum_{i=0}^{k-1} h_{i} \Bar{g}_{i+j} \;.
\end{align*}
Using \eqref{def g bar} and keeping the indices $i \in \llbracket0,k-1 \rrbracket$ for which $\Bar{g}_{i+j} \neq 0$, i.e. such that $i+j \in \llbracket k-1,2k-2 \rrbracket$,
we obtain

\begin{align}\label{calcul a_j}
\left\{ \begin{array}{l l l}
         a_j= \sum_{i=k-1-j}^{k-1} h_i g_{i+j-(k-1)} & \text{ if } & j \in \llbracket 0,k-2 \rrbracket \;, \\
         a_j = \sum_{i=0}^{2k-2-j} h_i g_{i+j-(k-1)} & \text{ if } & j \in \llbracket k-1,2k-2 \rrbracket \;.
    \end{array}\right.
\end{align}
In the following, we will connect $b$ with $a$ by distinguishing several cases depending on the value of $ j $. \\
We distinguish $j \in \llbracket0,2r \rrbracket$, $j \in \llbracket-2r,-1 \rrbracket$ and $j \in \llbracket2r+1,-2r+SN-1 \rrbracket$. Recall that $k=2r+1$.

\textbf{If $j \in \llbracket 0,2r \rrbracket$: } then $ \llbracket -r,r \rrbracket \cap \llbracket -r+j,r+j \rrbracket = \llbracket -r+j,r \rrbracket $.
Using \eqref{(eq 1 )} and \eqref{P_SN (h) _i}, the equality \eqref{c_j intermediaire} becomes
\begin{align}
    b_j 
    = \sum_{i=-r+j}^{r} {[P_{SN}(h)]}_{i} {[P_{SN}(g)]}_{i-j} \nonumber 
    = \sum_{i=-r+j}^{r} h_{r-i} g_{r-i+j} \nonumber \;.
\end{align}
By changing the variable $ l = r-i $, and using $k=2r+1$, we find
\begin{align}
    b_j = \sum_{l=0}^{2r-j} h_{l} g_{l+j} \nonumber 
    = \sum_{l=0}^{k-1-j} h_{l} g_{l+j} \nonumber 
    = \sum_{l=0}^{2k-2-(k-1+j)} h_{l} g_{l+(k-1+j)-(k-1)} \;. \nonumber
\end{align}
When $j \in \llbracket0,2r \rrbracket = \llbracket0,k-1 \rrbracket$, we have $k-1+j \in \llbracket k-1,2k-2 \rrbracket$, therefore using \eqref{calcul a_j}, we obtain
\begin{align}
    b_j &= a_{k-1+j} \label{c_j 1er cas} \;.
\end{align}

\textbf{If $j \in \llbracket -2r,-1 \rrbracket$: } then $\llbracket -r,r \rrbracket \cap \llbracket -r+j,r+j \rrbracket = \llbracket -r,r+j \rrbracket $.
Using \eqref{(eq 1 )} and \eqref{P_SN (h) _i}, the equality \eqref{c_j intermediaire} becomes
\begin{align}
    b_j
    = \sum_{i=-r}^{r+j} {[P_{SN}(h)]}_{i} {[P_{SN}(g)]}_{i-j} \nonumber 
    = \sum_{i=-r}^{r+j} h_{r-i} g_{r-i+j} \;. \nonumber
\end{align}
By changing the variable $ l = r-i $, and using $k=2r+1$, we find
\begin{align*}
    b_j = \sum_{l=-j}^{2r} h_{l} g_{l+j} 
    = \sum_{l=-j}^{k-1} h_{l} g_{l+j} 
    = \sum_{l=k-1-(k-1+j)}^{k-1} h_{l} g_{l+(k-1+j)-(k-1)} \;.
\end{align*}
When $j \in \llbracket-2r,-1 \rrbracket = \llbracket-(k-1),-1 \rrbracket$, we have $k-1+j \in \llbracket 0,k-2 \rrbracket$, and using \eqref{calcul a_j}, we obtain
\begin{align}
    b_j 
    &= a_{k-1+j} \label{c_j 2e cas} \;.
\end{align}

\textbf{If $j \in \llbracket 2r+1,SN-2r-1 \rrbracket$: } then $\llbracket -r,r \rrbracket \cap \llbracket -r+j,r+j \rrbracket = \emptyset $.
The equality \eqref{c_j intermediaire} becomes 
\begin{align}\label{c_j 3e cas}
    b_j = 0.
\end{align}
Therefore, we summarize \eqref{c_j 1er cas}, \eqref{c_j 2e cas} and \eqref{c_j 3e cas}: For all $j \in \llbracket-(k-1),-(k-1)+SN-1 \rrbracket$,
\begin{align}\label{formule b final}
    b_j = \left\{
    \begin{array}{l l}
         a_{k-1+j} & \text{ if } j \in \llbracket -(k-1),k-1 \rrbracket, \\
         0 & \text{ if } j \in \llbracket k,SN-k \rrbracket .
    \end{array}
    \right.
\end{align}

Let us now introduce 'padding zero $= P$' and 'stride $= S$'. 
We will prove the equality between matrices in \eqref{eq lemma S_N C C S_N^T = C Q_N conv} using the equality between vectors in \eqref{formule b final}.

Recall that $P = \left\lfloor \frac{k-1}{S} \right\rfloor S \leq k-1$, and let $i \in \llbracket0,2P \rrbracket$.
Therefore $i-P \in \llbracket -P,P \rrbracket \subset \llbracket -(k-1),k-1 \rrbracket$, hence using \eqref{def a}, \eqref{lien conv P et P'} and
\eqref{formule b final},
we have
\begin{align*}
    \left[\conv(h,g,\text{padding zero} = P, \text{stride} = 1)\right]_i 
    = a_{k-1+i-P} 
    = b_{i-P} \;.
\end{align*}
Therefore, using \eqref{def conv S} and $ \left\lfloor 2P/S \right\rfloor + 1 = 2P/S+1$
\begin{align*} 
    & \conv(h,g,\text{padding zero} = P, \text{stride} = S)  \\
    &= 
    \left(
    b_{-\left\lfloor \frac{k-1}{S} \right\rfloor S} , \ldots , b_{-2S} , b_{-S} , b_0 , b_S , b_{2S} , \ldots , b_{\left\lfloor \frac{k-1}{S} \right\rfloor S}
    \right)^T \in \RR^{2 P/S +1 }  \;.
\end{align*}
Using the definition of $Q_{S,N}$ in \eqref{def Q_N}, we obtain 
\begin{align*}
    & Q_{S,N}(\conv(h,g,\text{padding zero} = P, \text{stride} = S)) \\
    &= \left(
    b_0 , b_S , b_{2S} , \ldots , b_{\left\lfloor \frac{k-1}{S} \right\rfloor S} ,
    0 , \ldots , 0 ,
    b_{-\left\lfloor \frac{k-1}{S} \right\rfloor S} , \ldots , b_{-2S} , b_{-S}
    \right)^T \in \RR^N \;.
\end{align*}
But, using \eqref{c_j 3e cas}, and since $\left\lfloor \frac{k-1}{S} \right\rfloor S$ is the largest multiple of $S$ less than or equal to $k-1$ and $b$ is $SN$-periodic, we have  
\begin{align*}
    S_N b &= \left(
    b_0 , b_S , b_{2S} , \ldots , b_{\left\lfloor \frac{k-1}{S} \right\rfloor S} ,
    0 , \ldots , 0 ,
    b_{SN-\left\lfloor \frac{k-1}{S} \right\rfloor S} , \ldots , b_{SN-2S} , b_{SN-S}
    \right)^T \\
    & = \left(
    b_0 , b_S , b_{2S} , \ldots , b_{\left\lfloor \frac{k-1}{S} \right\rfloor S} ,
    0 , \ldots , 0 ,
    b_{-\left\lfloor \frac{k-1}{S} \right\rfloor S} , \ldots , b_{-2S} , b_{-S}
    \right)^T \in \RR^N \;.
\end{align*}
Finally, we have
\begin{align*}
    S_N b = Q_{S,N}(\conv(h,g,\text{padding zero} = P, \text{stride} = S)) \;.
\end{align*}
Using \eqref{b = b sn h g}, \eqref{eq verifie par c(h,g)} and Lemma \ref{S_N C(x) S_N^T = C(S_Nx)}, we conclude that
\begin{align*}
    S_N C(P_{SN}(h)) C(P_{SN}(g))^T S_N^T 
    &= S_N C(b^{(SN)}[{h},{g}]) S_N^T \\
    &= C(S_N b^{(SN)}[{h},{g}]) \\
    &= C(Q_{S,N}(\conv(h,g,\text{padding zero} = P, \text{stride} = S))) \;.
\end{align*}
\end{proof}
\begin{lemma}\label{simplif Kk Kk^T - Id_MN}
    Let $M$, $C$, $S$, $k=2r+1$ be positive integers, and let $\noyau \in \RR^{M \times C \times k}$.
    Let $N$ be such that $SN \geq {2k-1}$, and $P = \left\lfloor \frac{k-1}{S} \right\rfloor S$.
    We denote by $z_{P/S} = \begin{bmatrix}
    0_{P/S} \\ 1 \\ 0_{P/S}
    \end{bmatrix} \in \RR^{2P/S+1}$.
    We have 
    \begin{align*}
        \Kk \Kk^T - Id_{MN} &= \left(
        \begin{array}{c c c}
          C(Q_{S,N}(x_{1,1})) & \ldots & C(Q_{S,N}(x_{1,M})) \\
          \vdots & \ddots & \vdots \\
          C(Q_{S,N}(x_{M,1})) &\ldots & C\left(Q_{S,N}(x_{M,M})\right)
        \end{array}
        \right) \;,
    \end{align*}
    where for all $m,l \in \llbracket 1,M \rrbracket$,
    \begin{align}\label{x_m,l RO case}
    x_{m,l} =  \sum_{c=1}^C \conv(\noyau_{m,c},\noyau_{l,c},\text{padding zero} = P, \text{stride} = S) - \delta_{m=l} z_{P/S} \in \RR^{2P/S+1} \;.
    \end{align}
\end{lemma}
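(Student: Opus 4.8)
The plan is to exploit the block structure of $\Kk$ together with Lemma \ref{S_N C C S_N^T = C Q_N conv} and the linearity of the two operators $x \mapsto C(x)$ and $x\mapsto Q_{S,N}(x)$. First I would write $\Kk$ as the $M \times C$ block matrix whose $(m,c)$ block is $S_N C(P_{SN}(\noyau_{m,c}))$, as in \eqref{def mathcal K 1D}. Then $\Kk \Kk^T$ is an $M \times M$ block matrix, and its $(m,l)$ block is obtained by the usual block-product rule as $\sum_{c=1}^C S_N C(P_{SN}(\noyau_{m,c})) C(P_{SN}(\noyau_{l,c}))^T S_N^T$.

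Second, I would apply Lemma \ref{S_N C C S_N^T = C Q_N conv} to each summand, which rewrites the generic term as $C(Q_{S,N}(\conv(\noyau_{m,c},\noyau_{l,c},\text{padding zero} = P, \text{stride} = S)))$. Since both the circulant embedding $C(\cdot)$ and the operator $Q_{S,N}$ are linear, the finite sum over the input channels $c$ can be pulled inside both operators, so the $(m,l)$ block of $\Kk\Kk^T$ equals $C\bigl(Q_{S,N}\bigl(\sum_{c=1}^C \conv(\noyau_{m,c},\noyau_{l,c},\text{padding zero} = P, \text{stride} = S)\bigr)\bigr)$.

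Third, I would recognize the identity term as being of exactly the same form so that it can be absorbed into the $Q_{S,N}$ argument. The $(m,l)$ block of $Id_{MN}$ is $\delta_{m=l}\,Id_N$, and the key observation is that $Id_N = C(Q_{S,N}(z_{P/S}))$. Indeed, $z_{P/S}$ has a single nonzero entry, equal to $1$, in position $P/S$, so by the definition \eqref{def Q_N} of $Q_{S,N}$ its image is the first canonical vector $(1,0,\ldots,0)^T \in \RR^N$, whose circulant matrix is $Id_N$ by \eqref{cond mat circulante}. By linearity, $\delta_{m=l}\,Id_N = C(Q_{S,N}(\delta_{m=l}\, z_{P/S}))$.

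Finally, subtracting block by block and invoking linearity once more, the $(m,l)$ block of $\Kk\Kk^T - Id_{MN}$ is $C(Q_{S,N}(x_{m,l}))$ with $x_{m,l}$ as defined in \eqref{x_m,l RO case}, which is precisely the claimed identity. I do not expect a genuine obstacle here: the substantive work was already carried out in Lemma \ref{S_N C C S_N^T = C Q_N conv}, and this statement amounts to bookkeeping over the block indices. The only points deserving explicit care are the verification that $Id_N = C(Q_{S,N}(z_{P/S}))$ and the (routine) remark that $C(\cdot)$ and $Q_{S,N}$ are linear, which is what lets the sum over channels and the $\delta_{m=l}$ term commute with these operators.
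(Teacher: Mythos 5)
Your proposal is correct and follows essentially the same route as the paper's proof: block-decompose $\Kk\Kk^T$, apply Lemma \ref{S_N C C S_N^T = C Q_N conv} to each channel term, write $Id_N = C(Q_{S,N}(z_{P/S}))$, and conclude by linearity of $C(\cdot)$ and $Q_{S,N}$. The only (immaterial) difference is ordering: the paper subtracts the identity block before invoking the lemma, whereas you apply the lemma first and absorb the identity afterwards.
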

\begin{proof}
We have from \eqref{def mathcal K 1D},
\begin{align*}
\Kk = \left(
\begin{array}{c c c}
  S_NC(P_{SN}(\noyau_{1,1})) & \ldots & S_NC(P_{SN}(\noyau_{1,C})) \\
  \vdots & \vdots & \vdots \\
  S_NC(P_{SN}(\noyau_{M,1})) &\ldots & S_NC(P_{SN}(\noyau_{M,C}))
\end{array}
\right) \in \mathbb{R}^{MN \times CSN} \;.
\end{align*}
Hence, we have that the block $(m,l) \in \llbracket1,M\rrbracket^2$ of size $(N,N)$ of $\Kk \Kk^T$ is equal to :
\begin{align*}
      & \left(
\begin{array}{c c c}
  S_NC(P_{SN}(\noyau_{m,1})) & \ldots & S_NC(P_{SN}(\noyau_{m,C}))
\end{array}
\right)
\left(
\begin{array}{c}
  C(P_{SN}(\noyau_{l,1}))^T S_N^T \\ \vdots \\ C(P_{SN}(\noyau_{l,C}))^T S_N^T 
\end{array}
\right) \\
&= 
   \sum_{c=1}^{C} S_N C(P_{SN}(\noyau_{m,c})) C(P_{SN}(\noyau_{l,c}))^T S_N^T 
 \;.
\end{align*}
We denote by $A_{m,l} \in \RR^{N \times N}$ the block $(m,l) \in \llbracket1,M\rrbracket^2$ of size $(N,N)$ of $\Kk \Kk^T-Id_{MN}$.
We want to prove that $A_{m,l} = C(Q_{S,N}(x_{m,l}))$ where $x_{m,l}$ is defined in \eqref{x_m,l RO case}. 
Using \eqref{def bases canoniques}, \eqref{def mat circulante}, and \eqref{def Q_N}, we have $Id_N = C\left(\begin{bmatrix}
1 \\ 0_{N-1}
\end{bmatrix}\right) = C(Q_{S,N}(z_{P/S}))$, and therefore,
\begin{align*}
    A_{m,l} &=  \sum_{c=1}^{C} S_N C(P_{SN}(\noyau_{m,c})) C(P_{SN}(\noyau_{l,c}))^T S_N^T - \delta_{m=l} C(Q_{S,N}(z_{P/S})) \;.
\end{align*}
Using Lemma \ref{S_N C C S_N^T = C Q_N conv}, this becomes
\begin{align*}
    A_{m,l} &= \sum_{c=1}^{C} C(Q_{S,N}(\conv(\noyau_{m,c},\noyau_{l,c},\text{padding zero} = P, \text{stride} = S))) - \delta_{m=l} C(Q_{S,N}(z_{P/S})) \;.
\end{align*}
By linearity of $C$ and $Q_{S,N}$, we obtain
\begin{align*}
    A_{m,l} 
    &= C\left(Q_{S,N}\left( \sum_{c=1}^{C} \conv(\noyau_{m,c},\noyau_{l,c},\text{padding zero} = P, \text{stride} = S) - \delta_{m=l} z_{P/S}\right)\right) \\
    &= C\left(Q_{S,N}(x_{m,l})\right) \;.
\end{align*}
\end{proof}

\begin{proof}[Proof of Theorem \ref{prop norme Frobenius}]
Let $M$, $C$, $S$, $k=2r+1$ be positive integers, and let $\noyau \in \RR^{M \times C \times k}$.
Let $N$ be such that $SN \geq {2k-1}$, and $P = \left\lfloor \frac{k-1}{S} \right\rfloor S$.
For all $m,l \in \llbracket 1,M \rrbracket$, we denote by $A_{m,l} \in \mathbb{R}^{N \times N}$ the block $(m,l)$ of size $(N,N)$ of $\Kk \Kk^T - Id_{MN}$.
Using Lemma \ref{simplif Kk Kk^T - Id_MN}, we have $$A_{m,l} = C\left(Q_{S,N}\left(\sum_{c=1}^C \conv(\noyau_{m,c},\noyau_{l,c},\text{padding zero} = P, \text{stride} = S) - \delta_{m=l} z_{P/S}\right)\right).$$
Hence, from \eqref{def mat circulante} and \eqref{def Q_N}, using the fact that for all $x \in \mathbb{R}^N$, $\|C(x)\|_F^2 = N\|x\|_2^2$, and for all $x \in \mathbb{R}^{2P/S+1}$, $\|Q_{S,N}(x)\|_2^2 = \|x\|_2^2 $, we have
\begin{align*}
    & \|\Kk \Kk^T - Id_{MN}\|_F^2 \\ &= \sum_{m=1}^M \sum_{l=1}^M \|A_{m,l}\|_F^2 \\
    &= \sum_{m=1}^M \sum_{l=1}^M {\left\| C\left(Q_{S,N}\left(\sum_{c=1}^{C} \conv(\noyau_{m,c},\noyau_{l,c},\text{padding zero} = P, \text{stride} = S) - \delta_{m=l} z_{P/S}\right)\right) \right\|}_F^2 \\
    &= \sum_{m=1}^M \sum_{l=1}^M N {\left\| Q_{S,N}\left(\sum_{c=1}^{C} \conv(\noyau_{m,c},\noyau_{l,c},\text{padding zero} = P, \text{stride} = S) - \delta_{m=l} z_{P/S}\right) \right\|}_2^2 \\
    &= N \sum_{m=1}^M \sum_{l=1}^M {\left\| \sum_{c=1}^{C} \conv( \noyau_{m,c},\noyau_{l,c},\text{padding zero} = P, \text{stride} = S) - \delta_{m=l} z_{P/S} \right\|}_2^2 \;.
\end{align*}
Therefore, using \eqref{I_r0 bis} and \eqref{def CONV}, we obtain for any $M$, $C$, $S$, $k=2r+1$ and $\noyau \in \RR^{M \times C \times k}$,
\begin{align}\label{err frob projective}
    \|\Kk \Kk^T - Id_{MN}\|_F^2 &= N \| \CONV(\noyau,\noyau,\text{padding zero} = P, \text{stride} = S) - \Iro \|_F^2 \;.
\end{align}
This concludes the proof in the RO case.\\
In order to prove the theorem in the CO case we use Lemma 1 in \citet{wang2020orthogonal}.
This lemma states that
$$\|\Kk^T \Kk - Id_{CSN}\|_F^2 = \|\Kk \Kk^T - Id_{MN}\|_F^2 +CSN-MN.$$
Therefore, using that \eqref{err frob projective} holds for all $M$, $C$ and $S$, we have
\begin{align}\label{err frob embedding}
\|\Kk^T \Kk - Id_{CSN}\|_F^2 &= N \left(\| \CONV(\noyau,\noyau,\text{padding zero} = P, \text{stride} = S) - \Iro \|_F^2 -(M-CS)\right) 
\end{align}
Hence, using the definitions of $\ERR{F}{N}$ and $L_{orth}$ in Sections \ref{sec 1D L_orth} and \ref{sec 1D errors}, \eqref{err frob projective} and \eqref{err frob embedding} lead to 
$$ \left(\ERR{F}{N}(\noyau)\right)^2 = N L_{orth}(\noyau) .$$
This concludes the proof of Theorem \ref{prop norme Frobenius} in the 1D case.
\end{proof}

\subsection{Sketch of the Proof of Theorem \ref{prop norme Frobenius}, in the 2D Case}
We start by stating intermediate lemmas.
First we introduce a slight abuse of notation, for a vector $x \in \RR^{N^2}$, we denote by $\Cc(x) = \Cc(X) $, where $X \in \RR^{N \times N}$ such that $\Vect(X) = x$.
The main steps of the proof in the 2D case follow those in the 1D case and are given below.
\begin{lemma}
    Let $X \in \mathbb{R}^{SN \times SN}$.
    We have 
    $$ \Ss_N \Cc(X) \Ss_N^T = \Cc(\Ss_N \Vect(X)) . $$
\end{lemma}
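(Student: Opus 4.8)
The plan is to transcribe the 1D argument of Lemma~\ref{S_N C(x) S_N^T = C(S_Nx)}, replacing the circulant matrix $C(\cdot)$ by the doubly-block circulant matrix $\Cc(\cdot)$ and the 1D sampling $S_N$ by the 2D sampling $\Ss_N$. The guiding idea is that $\Ss_N \Cc(X) \Ss_N^T$ is simply the submatrix of $\Cc(X)$ obtained by retaining, in both the rows and the columns, the global indices whose two spatial coordinates are multiples of $S$; and that sampling a doubly-block circulant matrix in this way yields again a doubly-block circulant matrix, associated to the sampled generator.

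First I would write down the entrywise description of $\Cc(X)$. With the (row-major) vectorization convention under which $\Cc(\Pp_{SN}(h))$ implements 2D circular convolution, one has, for $n=SN$ and all $p,i,q,j\in\llbracket0,n-1\rrbracket$,
\[
  [\Cc(X)]_{pn+i,\ qn+j} = X_{(p-q)\%n,\ (i-j)\%n},
\]
which merely unpacks the block structure: block $(p,q)$ is $C(X_{(p-q)\%n,:})$, whose $(i,j)$-entry is $X_{(p-q)\%n,(i-j)\%n}$. Next I would record the action of $\Ss_N$ on indices: since $[\Ss_N\Vect(x)]_{iN+j}=x_{Si,Sj}$, the matrix $\Ss_N$ selects the output global index $iN+j$ from the input global index $(Si)(SN)+Sj$. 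Setting $Y=\Ss_N\Cc(X)\Ss_N^T\in\RR^{N^2\times N^2}$, this gives, for $i,j,m,l\in\llbracket0,N-1\rrbracket$,
\[
  Y_{iN+j,\ mN+l} = [\Cc(X)]_{(Si)(SN)+Sj,\ (Sm)(SN)+Sl} = X_{(Si-Sm)\%SN,\ (Sj-Sl)\%SN}.
\]

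The engine of the proof, exactly as in 1D, is the arithmetic identity $(Sa)\%(SN)=S\,(a\%N)$, valid for every integer $a$. Applying it in each coordinate yields $Y_{iN+j,\ mN+l}=X_{S((i-m)\%N),\ S((j-l)\%N)}$. Introducing the sampled matrix $Z\in\RR^{N\times N}$ with $Z_{i,j}=X_{Si,Sj}$, so that $\Vect(Z)=\Ss_N\Vect(X)$, the right-hand side equals $Z_{(i-m)\%N,\ (j-l)\%N}$, which is precisely $[\Cc(Z)]_{iN+j,\ mN+l}$ when one reads the entrywise formula with $n=N$. Hence $Y=\Cc(Z)=\Cc(\Ss_N\Vect(X))$, using the convention $\Cc(v):=\Cc(V)$ for $\Vect(V)=v$.

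I do not anticipate a genuine difficulty. The only delicate point is the bookkeeping: one must fix the vectorization convention consistently (so that the outer, block, index corresponds to the first spatial axis and the inner index to the second), and then verify that the single matrix $\Ss_N$ performs the sampling along both axes simultaneously. Once the index formula for $\Cc$ and the index action of $\Ss_N$ are pinned down, the identity $(Sa)\%(SN)=S(a\%N)$ closes the argument immediately, mirroring the 1D proof.
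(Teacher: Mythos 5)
Your proof is correct and takes essentially the same approach the paper intends: the paper does not actually prove this 2D lemma (it is stated inside a proof sketch that declares the 2D steps follow the 1D case), and your argument is precisely the faithful transcription of the paper's 1D proof of Lemma~\ref{S_N C(x) S_N^T = C(S_Nx)} — entrywise description of the doubly-block circulant matrix, index action of $\Ss_N$, and the identity $(Sa)\%(SN)=S(a\%N)$ applied coordinatewise. Nothing is missing; the bookkeeping on the vectorization convention that you flag is indeed the only point requiring care, and you handle it consistently.
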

Let $\Qq_{S,N}$ be the operator which associates to a matrix $ x \in \RR^{(2P/S+1) \times (2P/S+1)} $ the matrix
\begin{align*}
    \left(\begin{array}{c c c c c c c c c}
     x_{P/S,P/S} & \cdots & x_{P/S,2P/S} & 0 & \cdots & 0 & x_{P/S,0} & \cdots & x_{P/S,P/S-1}  \\
     \vdots & \vdots & \vdots & \vdots & \vdots & \vdots & \vdots & \vdots & \vdots \\
     x_{2P/S,P/S} & \cdots & x_{2P/S,2P/S} & 0 & \cdots & 0 & x_{2P/S,0} & \cdots & x_{2P/S,P/S-1} \\
     0 & \dots & 0 & 0 & \dots & 0 & 0 & \dots & 0 \\
     \vdots & \vdots & \vdots & \vdots & \vdots & \vdots & \vdots & \vdots & \vdots \\
     0 & \dots & 0 & 0 & \dots & 0 & 0 & \dots & 0 \\
     x_{0,P/S} & \cdots & x_{0,2P/S} & 0 & \cdots & 0 & x_{0,0} & \cdots & x_{0,P/S-1}  \\
     \vdots & \vdots & \vdots & \vdots & \vdots & \vdots & \vdots & \vdots & \vdots \\
     x_{P/S-1,P/S} & \cdots & x_{P/S-1,2P/S} & 0 & \cdots & 0 & x_{P/S-1,0} & \cdots & x_{P/S-1,P/S-1}  
\end{array} \right) \in \mathbb{R}^{N \times N} \;.
\end{align*}
\begin{lemma}
    Let $N$ be such that $SN \geq {2k-1}$, $h,g \in \mathbb{R}^{k \times k}$ and $P = \left\lfloor \frac{k-1}{S} \right\rfloor S$, we have
    \begin{align*}
        \Ss_N \Cc(\Pp_{SN}(h)) \Cc(\Pp_{SN}(g))^T \Ss_N^T 
        &= \Cc(\Qq_{S,N}(\conv(h,g,\text{padding zero} = P, \text{stride} = S))) \;.
    \end{align*}
\end{lemma}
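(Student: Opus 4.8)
The plan is to mirror the 1D proof of Lemma~\ref{S_N C C S_N^T = C Q_N conv} step by step, replacing the circulant machinery by its doubly-block circulant counterpart. First I would record the two structural facts about $\Cc(\cdot)$ that drive the argument: the transpose rule $\Cc(x)^T = \Cc(\widetilde{x})$, where $\widetilde{x}_{i_1,i_2} = x_{(-i_1)\%n,(-i_2)\%n}$, and the product rule $\Cc(x)\Cc(y) = \Cc(x \circledast y)$, where $\circledast$ denotes the 2D circular convolution $[x \circledast y]_{j_1,j_2} = \sum_{i_1,i_2} x_{i_1,i_2}\, y_{(j_1-i_1)\%n,(j_2-i_2)\%n}$. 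These are the exact 2D analogs of \eqref{transpose circulant matrix} and \eqref{product circulant matrices} and follow from the indexing characterization $[\Cc(x)]_{(m_1,m_2),(l_1,l_2)} = x_{(m_1-l_1)\%n,(m_2-l_2)\%n}$, the 2D version of \eqref{cond mat circulante}.

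With these in hand, I would compute
\[
\Cc(\Pp_{SN}(h))\, \Cc(\Pp_{SN}(g))^T = \Cc(\Pp_{SN}(h))\, \Cc(\widetilde{\Pp_{SN}(g)}) = \Cc(B),
\]
where $B_{j_1,j_2} = \sum_{i_1,i_2} [\Pp_{SN}(h)]_{i_1,i_2}\, [\Pp_{SN}(g)]_{(i_1-j_1)\%SN,(i_2-j_2)\%SN}$; this $B$ is the 2D counterpart of the vector $b^{(SN)}[h,g]$ of the 1D proof. The core of the argument is a support analysis of $B$: since the support of $\Pp_{SN}(h)$, in centered indexing, is exactly $\llbracket-r,r\rrbracket^2$, a summand is nonzero only when both $(i_1,i_2)$ and $(i_1-j_1,i_2-j_2) \bmod SN$ lie in $\llbracket-r,r\rrbracket^2$. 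Because this support region is a Cartesian product and the periodization acts independently in each coordinate, the hypothesis $SN \geq 2k-1$ lets me run the 1D congruence argument (the step around \eqref{(eq 1 )}) separately in each coordinate: for $j_1,j_2 \in \llbracket-2r,SN-2r-1\rrbracket$, the condition $(i_\ell-j_\ell)\%SN \in \llbracket-r,r\rrbracket$ is equivalent to $i_\ell - j_\ell \in \llbracket-r,r\rrbracket$, so no wraparound interference occurs. After the change of variables $i_\ell \mapsto r-i_\ell$, this identifies $B_{j_1,j_2}$ with an honest (non-circular) 2D convolution, yielding $B_{j_1,j_2} = A_{k-1+j_1,\,k-1+j_2}$ for $(j_1,j_2)\in\llbracket-(k-1),k-1\rrbracket^2$ and $B_{j_1,j_2}=0$ whenever $j_1$ or $j_2$ lies in $\llbracket k,SN-k\rrbracket$, where $A = \conv(h,g,\text{padding zero}=k-1,\text{stride}=1)$. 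This is the 2D analog of \eqref{formule b final}.

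Finally I would introduce the padding $P = \left\lfloor\frac{k-1}{S}\right\rfloor S$ and the stride $S$. As in the 1D case, the placement identity $[\conv(h,g,\text{padding zero}=P,\text{stride}=1)]_{i_1,i_2} = A_{k-1+i_1-P,\,k-1+i_2-P} = B_{i_1-P,\,i_2-P}$ together with sampling the indices at multiples of $S$ shows that $\Ss_N\Vect(B) = \Vect(\Qq_{S,N}(\conv(h,g,\text{padding zero}=P,\text{stride}=S)))$; the vanishing of $B$ on the band $\llbracket k,SN-k\rrbracket$ in either coordinate is precisely what produces the central block of zeros in the definition of $\Qq_{S,N}$. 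Combining this with the 2D version of Lemma~\ref{S_N C(x) S_N^T = C(S_Nx)}, namely $\Ss_N\Cc(B)\Ss_N^T = \Cc(\Ss_N\Vect(B))$, gives the claimed equality. The main obstacle is purely the two-dimensional bookkeeping: one must track two indices simultaneously through the periodization of $\Pp_{SN}$ and the reshuffling in $\Qq_{S,N}$. The key simplification that keeps this manageable is that the relevant support is a product set and the modular reductions decouple across the two coordinates, so the delicate non-interference estimate is really just the 1D estimate applied twice.
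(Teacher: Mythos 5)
Your proposal is correct and is precisely the argument the paper intends: the paper only sketches the 2D case by deferring to the 1D proofs of Lemma~\ref{S_N C(x) S_N^T = C(S_Nx)} and Lemma~\ref{S_N C C S_N^T = C Q_N conv}, and your write-up carries out that sketch faithfully (2D circulant transpose/product rules, the non-interference argument under $SN \geq 2k-1$, the identification with the zero-padded convolution, and the final sampling step). The point you single out --- that the supports are Cartesian products so the modular reductions decouple coordinate-wise, reducing everything to the 1D estimate applied twice --- is exactly what justifies the paper's ``then we proceed as in the 1D case.''
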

\begin{lemma}\label{simplif Kk Kk^T - Id_MN 2D}
    Let $M$, $C$, $S$, $k=2r+1$ be positive integers, and let $\noyau \in \RR^{M \times C \times k \times k}$.
    Let $N$ be such that $SN \geq {2k-1}$, and $P = \left\lfloor \frac{k-1}{S} \right\rfloor S$.
    We set $z_{P/S,P/S} \in \RR^{(2P/S+1) \times (2P/S+1)}$ such that for all $i,j \in \llbracket0,2P/S \rrbracket$, ${[z_{P/S,P/S}]}_{i,j} = \delta_{i=P/S} \delta_{j=P/S}$.
    We have 
    \begin{align*}
        \Kk \Kk^T - Id_{MN^2} &= \left(
        \begin{array}{c c c}
          \Cc(\Qq_{S,N}(x_{1,1})) & \ldots & \Cc(\Qq_{S,N}(x_{1,M})) \\
          \vdots & \ddots & \vdots \\
          \Cc(\Qq_{S,N}(x_{M,1})) &\ldots & \Cc(\Qq_{S,N}(x_{M,M}))
        \end{array}
        \right) \;,
    \end{align*}
    where for all $m,l \in \llbracket 1,M \rrbracket$, $$x_{m,l} = \sum_{c=1}^C \conv(\noyau_{m,c},\noyau_{l,c},\text{padding zero} = P, \text{stride} = S) - \delta_{m=l} z_{P/S,P/S}.$$
\end{lemma}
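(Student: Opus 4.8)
The plan is to transpose the proof of the $1$D Lemma \ref{simplif Kk Kk^T - Id_MN} essentially verbatim, replacing each $1$D ingredient by its $2$D counterpart: the circulant matrix $C(\cdot)$ by the doubly-block circulant $\Cc(\cdot)$, the embedding $P_{SN}$ by $\Pp_{SN}$, the sampling matrix $S_N$ by $\Ss_N$, and the operator $Q_{S,N}$ by $\Qq_{S,N}$. The two lemmas stated just above supply exactly the $2$D analogues of Lemma \ref{S_N C(x) S_N^T = C(S_Nx)} and Lemma \ref{S_N C C S_N^T = C Q_N conv} needed for this transposition, so the argument reduces to block bookkeeping and linearity.

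First I would use the block expression of $\Kk$ from Appendix \ref{conv as matrix vec prod}, whose $(m,c)$ block is $\Ss_N \Cc(\Pp_{SN}(\noyau_{m,c}))$. Multiplying $\Kk$ by $\Kk^T$ block-wise shows that the $(m,l)$ block of size $(N^2,N^2)$ of $\Kk\Kk^T$ equals
\[
\sum_{c=1}^C \Ss_N \Cc(\Pp_{SN}(\noyau_{m,c})) \Cc(\Pp_{SN}(\noyau_{l,c}))^T \Ss_N^T .
\]
Applying the preceding $2$D convolution lemma to each summand turns this into $\sum_{c=1}^C \Cc(\Qq_{S,N}(\conv(\noyau_{m,c},\noyau_{l,c},\text{padding zero}=P,\text{stride}=S)))$.

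Next I would identify the identity appearing on the diagonal. From the definitions of $\Qq_{S,N}$ and of $z_{P/S,P/S}$, the matrix $\Qq_{S,N}(z_{P/S,P/S})\in\RR^{N\times N}$ has a single nonzero entry, equal to $1$, at position $(0,0)$; hence, by the definition of $\Cc$, its first block-row is $(Id_N,0,\ldots,0)$ and $\Cc(\Qq_{S,N}(z_{P/S,P/S}))=Id_{N^2}$. Subtracting $Id_{MN^2}$ therefore only modifies the diagonal blocks $m=l$, adding the term $-\delta_{m=l}\,\Cc(\Qq_{S,N}(z_{P/S,P/S}))$. Using the linearity of both $\Cc$ and $\Qq_{S,N}$, the $(m,l)$ block of $\Kk\Kk^T-Id_{MN^2}$ collapses to
\[
\Cc\!\left(\Qq_{S,N}\!\left(\sum_{c=1}^C \conv(\noyau_{m,c},\noyau_{l,c},\text{padding zero}=P,\text{stride}=S)-\delta_{m=l}\,z_{P/S,P/S}\right)\right)=\Cc(\Qq_{S,N}(x_{m,l})),
\]
which is precisely the asserted block form.

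The genuine difficulty is not in this lemma but is already absorbed into the $2$D convolution lemma on which it rests: establishing $\Ss_N \Cc(\Pp_{SN}(h)) \Cc(\Pp_{SN}(g))^T \Ss_N^T = \Cc(\Qq_{S,N}(\conv(h,g,\text{padding zero}=P,\text{stride}=S)))$ requires the two-dimensional analogue of the support and wrap-around analysis carried out in \eqref{(eq 1 )}--\eqref{formule b final}, now for doubly-block circulant matrices, together with the $2$D sampling identity $\Ss_N\Cc(X)\Ss_N^T=\Cc(\Ss_N\Vect(X))$. Once those are in hand, the present lemma is a routine linearity argument identical in structure to the $1$D case, and I would expect to dispatch it in a few lines.
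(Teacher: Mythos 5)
Your proposal is correct and matches the paper's own treatment: the paper proves this lemma by stating the 2D analogues of Lemma \ref{S_N C(x) S_N^T = C(S_Nx)} and Lemma \ref{S_N C C S_N^T = C Q_N conv} and then saying ``proceed as in the 1D case,'' which is exactly the block computation and linearity argument you carry out. Your explicit verification that $\Cc(\Qq_{S,N}(z_{P/S,P/S}))=Id_{N^2}$ is the 2D counterpart of the identity $Id_N = C(Q_{S,N}(z_{P/S}))$ used in the 1D proof, so nothing is missing.
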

Then we proceed as in the 1D case.

\section{Proof of Theorem \ref{Prop norme spectrale}}\label{proof Prop norme spectrale}

As in Section \ref{proof Prop existence cco} and Section \ref{proof prop norme Frobenius}, we give the full proof in the 1D case and a sketch of proof in the 2D case.

The lower-bound is a consequence of Theorem \ref{prop norme Frobenius}.

The proof of the upper-bound is different in the RO case and the CO case. In the RO case, we first express the orthogonality residual using Lemma \ref{simplif Kk Kk^T - Id_MN}. Then we conclude with calculations based on matrix norm inequalities, properties of circulant matrices and the definition of $L_{orth}(\Kbf)$.

The CO case is more difficult since, to use in place of Lemma \ref{simplif Kk Kk^T - Id_MN}, we first need to establish Lemma \ref{Ch^T S_n^T S_n Cg}. We then proceed with calculations to express that $\|\Kk^T\Kk - Id_{CSN}\|_2$ is upper-bounded by a quantity independent of $N$, as long as $N\geq 2k-1$. Then, after calculations, a key argument is to apply Theorem \ref{prop norme Frobenius} to the matrix $\Kk$ obtained for the signal size $N' = 2k-1$.

\subsection{Proof of Theorem \ref{Prop norme spectrale}, in the 1D Case}

The lower-bound of Theorem \ref{Prop norme spectrale} is an immediate consequence of \eqref{norm_equiv} and Theorem \ref{prop norme Frobenius}. We have indeed both in the RO and CO case:
\[(\ERR{s}{N}(\noyau))^2 \geq \frac{1}{\min(M,CS^2) N^2} (\ERR{F}{N}(\noyau))^2 = \frac{1}{\min(M,CS^2)} L_{orth}(\noyau).
\]

We focus from now on on the upper-bound. Let $M$, $C$, $S$, $k=2r+1$ be positive integers, and let $\noyau \in \RR^{M \times C \times k}$.
Let $N$ be such that $SN \geq {2k-1}$, and $P = \left\lfloor \frac{k-1}{S} \right\rfloor S$.
We denote by $z_{P/S} = \begin{bmatrix}
0_{P/S} \\ 1 \\ 0_{P/S}
\end{bmatrix} \in \RR^{2P/S+1}$.

\textbf{RO case ($M \leq CS$): }
From Lemma \ref{simplif Kk Kk^T - Id_MN}, we have
\begin{align}\label{kk kk^T - I_MN RO case}
    \Kk \Kk^T - Id_{MN}
&= \left(
\begin{array}{c c c}
  C(Q_{S,N}(x_{1,1})) & \ldots & C(Q_{S,N}(x_{1,M})) \\
  \vdots & \ddots & \vdots \\
  C(Q_{S,N}(x_{M,1})) &\ldots & C(Q_{S,N}(x_{M,M}))
\end{array}
\right) \;,
\end{align}
where for all $m,l \in \llbracket 1,M \rrbracket$, 
\begin{align}\label{x_m,l RO}
    x_{m,l} =  \sum_{c=1}^C \conv(\noyau_{m,c},\noyau_{l,c},\text{padding zero} = P, \text{stride} = S) - \delta_{m=l} z_{P/S} \in \RR^{2P/S+1} \;.
\end{align}
We set
\begin{align*}
    B = \Kk \Kk^T - Id_{MN} \;.
\end{align*}
Since $ B $ is symmetric and due to the well-known properties of matrix norms, we have
$\|B\|_1 = \|B\|_{\infty}$ and $\|B\|_2^2 \leq \|B\|_1 \|B\|_{\infty}$.
Hence, using the definition of $\|B\|_1$, we have
\begin{align*}
    \|B\|_2^2 \leq \|B\|_1 \|B\|_{\infty} 
     = \|B\|_1^2 
     = \left(\max_{1 \leq l \leq MN} \sum_{m=1}^{MN} |B_{m,l}|\right)^2 \;.
\end{align*}
Using \eqref{kk kk^T - I_MN RO case}, and \eqref{def mat circulante}, we obtain
\begin{align*}
    \|B\|_2^2 & \leq \max_{1 \leq l \leq M} \left(\sum_{m=1}^{M} \|Q_{S,N}(x_{m,l})\|_1 \right)^2 \;.
\end{align*}
Given the definition of $Q_{S,N}$ in \eqref{def Q_N}, we have for all $x \in \RR^{2P/S+1}$, $\|Q_{S,N}(x)\|_1 = \|x\|_1$, therefore,
\begin{align*}
    \|B\|_2^2 & \leq \max_{1 \leq l \leq M} \left(\sum_{m=1}^{M} \|x_{m,l}\|_1 \right)^2 \;.
\end{align*}
We set $l_0 \in \arg\max_{1 \leq l \leq M} \left(\sum_{m=1}^{M} \|x_{m,l}\|_1 \right)^2$.
Using that for all $x \in \RR^n$, $\|x\|_1 \leq \sqrt{n}\|x\|_2$, we have
\begin{align*}
    \|B\|_2^2  \leq \left(\sum_{m=1}^{M} \|x_{m,l_0}\|_1 \right)^2 
     \leq (2P/S+1)\left(\sum_{m=1}^{M} {\|x_{m,l_0}\|}_2 \right)^2 \;.
\end{align*}
Using Cauchy-Schwarz inequality, we obtain
\begin{align*}
    \|B\|_2^2  \leq (2P/S+1)M \sum_{m=1}^{M} {\|x_{m,l_0}\|}_2^2 
     \leq (2P/S+1)M \sum_{m=1}^{M} \sum_{l=1}^{M} {\|x_{m,l}\|}_2^2 \;.
\end{align*}
Using \eqref{x_m,l RO}, then \eqref{I_r0 bis} and \eqref{def CONV}, we obtain
\begin{align*}
    & \|B\|_2^2 \\ & \leq (2P/S+1)M \sum_{m=1}^{M} \sum_{l=1}^{M} \left\|\sum_{c=1}^{C} \conv(\noyau_{m,c},\noyau_{l,c},\text{padding zero} = P, \text{stride} = S) - \delta_{m=l} z_{P/S} \right\|_2^2 \\
    & = (2P/S+1)M \sum_{m=1}^{M} \sum_{l=1}^{M} \left\|\left[ \CONV(\noyau,\noyau,\text{padding zero} = P, \text{stride} = S) - \Iro\right]_{m,l,:} \right\|_2^2 \\
    & = (2P/S+1)M \left\| \CONV(\noyau,\noyau,\text{padding zero} = P, \text{stride} = S) - \Iro \right\|_F^2 \\
    &= (2P/S+1)M L_{orth}(\noyau) \;.
\end{align*}
This proves the inequality in the RO case.\\

\textbf{CO case ($M \geq CS$): }
First, for $n \geq 2k-1$, let $R_n$ be the operator that associates to $x \in \RR^{2k-1}$, the vector
\begin{align}\label{def R_n}
    R_n(x) = (x_{k-1}, \ldots, x_{2k-2}, 0 , \ldots, 0 , x_{0}, \ldots , x_{k-2})^T \in \RR^{n} \;.
\end{align}
Note that, when $S'=1$, $N'=SN$, we have in \eqref{def Q_N}, $P'=k-1$ and 
\begin{align}\label{R_n = Q_1,n}
    Q_{1,SN} = R_{SN}.
\end{align}
Recall from \eqref{def bases canoniques} that $(f_i)_{i=0..SN-1}$ is the canonical basis of $\RR^{SN}$.
Let 
$\Lambda_j = C(f_j) \in \mathbb{R}^{SN \times SN}$
be the permutation matrix which shifts down (cyclically) any vector by $j \in \llbracket 0,SN-1 \rrbracket$: for all $x \in \RR^{SN}$, for $i \in \llbracket0,SN-1 \rrbracket$, $(\Lambda_j x)_i = x_{(i-j)\%SN}$.
Note that, using \eqref{def mat circulante}, we have for all $x \in \RR^{SN}$, 
\begin{align}\label{[C(x)]_:,j}
    [C(x)]_{:,j} = \Lambda_j x.
\end{align}
Recall that $k=2r+1$, and for all $h \in \RR^k$,
\begin{align*}
    P_{SN}(x) = (h_r,\ldots,h_0,0,\ldots,0,h_{2r},\ldots,h_{r+1})^T \in \RR^{SN} .
\end{align*}
For $j \in \llbracket0,SN-1\rrbracket$, for $x \in \RR^{k}$, we denote by 
\begin{align}\label{def P_SN^j}
    P_{SN}^{(j)}(x)= \Lambda_j P_{SN}(x)
\end{align}
and for $x \in \RR^{2k-1}$, we denote by
\begin{align}\label{def R_SN^j}
    R_{SN}^{(j)}(x)= \Lambda_j R_{SN}(x).
\end{align}

By assumption $SN \geq 2k-1$, hence $R_{SN}(x)$ is well-defined and
we have for all $j \in \llbracket0,SN-1 \rrbracket$, for all $x \in \RR^{2k-1}$,
\begin{align}\label{norme R_SN j (x) = norme x}
\begin{cases}
     \|R_{SN}^{(j)}(x)\|_1 = \|x\|_1, \\ \|R_{SN}^{(j)}(x)\|_2 = \|x\|_2.
\end{cases}
\end{align}
We first start by introducing the following Lemma.
\begin{lemma}\label{Ch^T S_n^T S_n Cg}
    Let $h,g \in \RR^k$.
    There exist $S$ vectors $x_0,\ldots,x_{S-1} \in \RR^{2k-1}$ such that for all $N$ satisfying $SN \geq 2k-1$, we have for all $j \in \llbracket0,SN-1 \rrbracket$,
    \begin{align*}
        \left[C(P_{SN}(h))^T S_N^T S_N C(P_{SN}(g))\right]_{:,j} = R_{SN}^{(j)}(x_{j\%S}) \;.
    \end{align*}
\end{lemma}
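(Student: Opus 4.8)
The plan is to compute the matrix $M := C(P_{SN}(h))^T S_N^T S_N C(P_{SN}(g))$ entrywise and then read off its columns. First I would use $S_N^T S_N = \sum_{i=0}^{N-1} F_{Si,Si}$ from \eqref{calcul S_N^T S_N}, so that forming $M$ from the two circulant matrices amounts to keeping only the inner indices that are multiples of $S$. Combining \eqref{transpose circulant matrix}, \eqref{cond mat circulante} and the identity $\widetilde{P_{SN}(h)}_t = [P_{SN}(h)]_{(-t)\%SN}$, a direct computation gives, for all $a,b \in \llbracket 0,SN-1\rrbracket$,
\[
M_{a,b} = \sum_{i=0}^{N-1} [P_{SN}(h)]_{(Si-a)\%SN}\,[P_{SN}(g)]_{(Si-b)\%SN}.
\]

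I would then prove two structural properties of $M$. The first is $S$-shift invariance: reindexing $i \mapsto i-1 \pmod N$ only permutes the set $\{Si\%SN : 0\le i\le N-1\}$, whence $M_{a+S,b+S}=M_{a,b}$ for all $a,b$ read modulo $SN$. Since $[C(g')]_{:,b}=\Lambda_b g'$ by \eqref{[C(x)]_:,j} and $\Lambda_a\Lambda_b=\Lambda_{(a+b)\%SN}$, this yields $M_{:,j}=\Lambda_{j-\rho}\,M_{:,\rho}$ with $\rho=j\%S$ and $j-\rho$ a multiple of $S$. The second property is the band support: by \eqref{P_SN (h) _i} a summand is nonzero only when both $(Si-a)\%SN$ and $(Si-b)\%SN$ lie in $\llbracket -r,r\rrbracket$, which forces $(a-b)\%SN \in \llbracket -2r,2r\rrbracket = \llbracket -(k-1),k-1\rrbracket$; the hypothesis $SN\ge 2k-1$ guarantees that this band of width $2(k-1)+1$ does not wrap, so $M_{a,b}=0$ outside it.

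With these two facts I would define the vectors $x_\rho$. For $\rho\in\llbracket 0,S-1\rrbracket$, the band support of column $\rho$ lets me set $x_{\rho,t} := M_{(\rho + t - (k-1))\%SN,\,\rho}$ for $t\in\llbracket 0,2k-2\rrbracket$; comparing with the definition of $R_{SN}$ in \eqref{def R_n} shows precisely $M_{:,\rho}=\Lambda_\rho R_{SN}(x_\rho)=R_{SN}^{(\rho)}(x_\rho)$ (recall \eqref{def R_SN^j}). Combining this with the $S$-shift relation gives $M_{:,j}=\Lambda_{j-\rho}\Lambda_\rho R_{SN}(x_\rho)=\Lambda_j R_{SN}(x_{j\%S})=R_{SN}^{(j)}(x_{j\%S})$, which is the claim. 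It remains to verify that each $x_\rho$ is independent of $N$: for $a$ within $k-1$ of $\rho$, the nonzero terms of $M_{a,\rho}$ come from the multiples of $S$ inside a window of width at most $2r<SN$, and on that window $[P_{SN}(h)]$ and $[P_{SN}(g)]$ take the $N$-free values $h_{r-\cdot}$, $g_{r-\cdot}$ dictated by \eqref{P_SN (h) _i}; hence the same $x_\rho$ serves every admissible $N$.

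The main obstacle is the modular bookkeeping. I must track the reductions modulo $SN$ carefully enough to (i) establish $S$-shift invariance with no spurious boundary terms, (ii) confirm that the support band never wraps, and (iii) certify that the contributing index set, and therefore each $x_\rho$, is genuinely $N$-independent. All three rely exactly on the standing assumption $SN\ge 2k-1$, which is what forces the relevant windows to fit inside a single period.
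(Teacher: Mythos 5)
Your proof is correct, and it takes a genuinely different route from the paper's. The paper argues column-by-column at the level of matrix algebra: it observes that multiplying the $j$-th column of $C(P_{SN}(g))$ by the sampling mask $S_N^T S_N$ amounts to replacing $g$ by a masked kernel $g^{j\%S}$ (zeroing the entries killed by the mask, a pattern depending only on $j\%S$), then invokes commutativity of circulant matrices and re-applies Lemma \ref{S_N C C S_N^T = C Q_N conv} with stride $1$ and size $SN$ to identify the whole product with the circulant matrix $C(R_{SN}(x_{j\%S}))$, where $x_{j\%S}=\conv(g^{j\%S},h,\text{padding zero}=k-1,\text{stride}=1)$ is an explicit, manifestly $N$-free convolution. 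You instead compute the product entrywise, $M_{a,b}=\sum_{i=0}^{N-1}[P_{SN}(h)]_{(Si-a)\%SN}[P_{SN}(g)]_{(Si-b)\%SN}$, and extract the two structural facts directly: $S$-shift invariance (reindexing the sum) and band support of width $2k-1$ (which $SN\geq 2k-1$ keeps from wrapping), then define $x_\rho$ implicitly as the band entries of column $\rho$ and check $N$-independence of the contributing index set by hand. What each buys: your argument is self-contained and more elementary — it does not need the circulant-commutativity trick nor the earlier lemma — while the paper's reduction produces a closed-form expression for the vectors $x_\rho$ as convolutions, which makes their $N$-independence automatic rather than something to be verified through modular bookkeeping; for the downstream use in the proof of Theorem \ref{Prop norme spectrale} (where only existence, the shift structure, and $N$-independence matter), either version suffices.
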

\begin{proof}
Recall that from \eqref{def S_N} and \eqref{calcul S_N^T S_N}, we have $S_N = \sum_{i=0}^{N-1} E_{i,Si}$ and
$A_N :=S_N^T S_N = \sum_{i=0}^{N-1} F_{Si,Si} $.
When applied to a vector $x \in \RR^{SN}$, $A_N$ keeps unchanged the components of $x$ whose indices are multiples of $S$, while the other components of $A_Nx$ are equal to zero.
We know from \eqref{[C(x)]_:,j} and \eqref{def P_SN^j} that, for $j \in \llbracket 0,SN-1 \rrbracket$, the $j$-th column of $C(P_{SN}(g))$ is equal to $P_{SN}^{(j)}(g)$.
Therefore, when applying $A_N$, this becomes $A_N P_{SN}^{(j)}(g) = P_{SN}^{(j)}\left(g^{j}\right)$, where $g^{j} \in \RR^k$ is formed from $g$ by putting zeroes in the place of the elements that have been replaced by 0 when applying $A_N$.
But since $A_N$ preserves the component whose index is a multiple of $S$, we have that the $j$-th column of $A_N C(P_{SN}(g))$ has the same elements as its $j\%S$-th column, shifted down by $(j-j\%S)$ indices.
More precisely, $A_N P_{SN}^{(j)}(g) = \Lambda_{j-j\%S} A_N P_{SN}^{(j\%S)}(g) $, hence $P_{SN}^{(j)}\left(g^{j}\right) = \Lambda_{j-j\%S} P_{SN}^{(j\%S)}\left(g^{j\%S}\right) = P_{SN}^{(j)}\left(g^{j\%S}\right)$.
This implies that $g^{j} = g^{j\%S}$.
Note that, using \eqref{eq def P_n}, we can also derive the exact formula of $g^{j}$, in fact for all $i \in \llbracket0,2r \rrbracket$,
\begin{align*}
    \left[g^{j}\right]_i = \left\{ 
    \begin{array}{l l}
         g_i & \text{ if } (i-r-j)\%S = 0, \\
         0 & \text{ otherwise.} 
    \end{array}
    \right.
\end{align*}
We again can see that $g^{j} = g^{j\%S}$.
Therefore, using \eqref{[C(x)]_:,j} and \eqref{def P_SN^j}, we have $$ A_N [C(P_{SN}(g))]_{:,j} = A_N P_{SN}^{(j)}(g) = P_{SN}^{(j)}\left(g^{j}\right) = P_{SN}^{(j)}\left(g^{j\%S}\right) = \left[C\left(P_{SN}\left(g^{j\%S}\right)\right)\right]_{:,j}.$$
Therefore, we have, for all $j \in \llbracket 0,SN-1 \rrbracket$,
\begin{align*}
[C(P_{SN}(h))^T A_N C(P_{SN}(g))]_{:,j} 
    =
\left[C(P_{SN}(h))^T  C(P_{SN}(g^{j\%S}))\right]_{:,j} \;.
\end{align*}
Using the fact that the transpose of a circulant matrix is a circulant matrix and that two circulant matrices commute with each other (see Equation \ref{transpose circulant matrix} and Equation \ref{commutativity circulant matrices}), we conclude that the transpose of any circulant matrix commutes with any circulant matrix, therefore
\begin{align*}
[C(P_{SN}(h))^T A_N C(P_{SN}(g))]_{:,j}
&= \left[C(P_{SN}(g^{j\%S}))
C(P_{SN}(h))^T \right]_{:,j} \;.
\end{align*}
Using Lemma \ref{S_N C C S_N^T = C Q_N conv} with $S'=1$ and $N' = SN$, and noting that,when $S'=1$, the sampling matrix $S_{N'}$ is equal to the identity, we have
\begin{align*}
    & C(P_{SN}(g^{j\%S})) C(P_{SN}(h))^T \\
    &= Id_{N'} C(P_{N'}(g^{j\%S})) C(P_{N'}(h))^T Id_{N'}^T \\
    &= C(Q_{S',N'}(\conv(g^{j\%S},h,\text{padding zero} = \left\lfloor \frac{k-1}{S'}\right\rfloor S', \text{stride} = S'))) \\
    &= C(Q_{1,SN}(\conv(g^{j\%S},h,\text{padding zero} = k-1, \text{stride} = 1)))
\end{align*}
To simplify, we denote by $x_{j\%S} = \conv(g^{j\%S},h,\text{padding zero} = k-1, \text{stride} = 1) \in \RR^{2k-1}$.
Using \eqref{R_n = Q_1,n}, we obtain 
\begin{align*}
    C(P_{SN}(g^{j\%S})) C(P_{SN}(h))^T = C(Q_{1,SN}(x_{j\%S})) 
    = C(R_{SN}(x_{j\%S})) \;.
\end{align*}
Using \eqref{[C(x)]_:,j} and \eqref{def R_SN^j}, we obtain
\begin{align*}
[C(P_{SN}(h))^T A_N C(P_{SN}(g))]_{:,j} = 
[C(R_{SN}(x_{j\%S}))]_{:,j}
=
\Lambda_j R_{SN}(x_{j\%S}) 
= R_{SN}^{(j)}(x_{j\%S}).
\end{align*}
Therefore, we have for all $j \in \llbracket0,SN-1 \rrbracket$,
\begin{align*}
    \left[C(P_{SN}(h))^T S_N^T S_N C(P_{SN}(g))\right]_{:,j} = R_{SN}^{(j)}(x_{j\%S}) \;.
\end{align*}
This concludes the proof of the lemma.
\end{proof}
Let us go back to the main proof.\\

Using \eqref{def mathcal K 1D}, we have that the block $(c,c') \in \llbracket1,C\rrbracket^2$ of size $(SN,SN)$ of $\Kk^T \Kk$ is equal to :
\begin{align}
&
\left(
\begin{array}{c c c}
   C(P_{SN}(\noyau_{1,c}))^T S_N^T & \ldots & C(P_{SN}(\noyau_{M,c}))^T S_N^T 
\end{array}
\right)
\left(
\begin{array}{c}
  S_NC(P_{SN}(\noyau_{1,c'}))  \\
  \vdots \\
  S_NC(P_{SN}(\noyau_{M,c'}))
\end{array}
\right) \nonumber \\ 
&= 
   \sum_{m=1}^{M} C(P_{SN}(\noyau_{m,c}))^T S_N^T S_N C(P_{SN}(\noyau_{m,c'})) \;. \label{1 point}
\end{align}
For any $(m,c,c') \in \llbracket 1,M \rrbracket \times \llbracket 1,C \rrbracket^2$, we denote by ${(x_{m,c,c',s})}_{s=0..S-1}$ the $S$ vectors of $\RR^{2k-1}$ obtained when applying Lemma \ref{Ch^T S_n^T S_n Cg} with $h=\noyau_{m,c}$, and $g=\noyau_{m,c'}$.
Hence, we have, for all $j \in \llbracket0,SN-1 \rrbracket$,
\begin{align}\label{2 points}
    [C(P_{SN}(\noyau_{m,c}))^T S_N^T S_N C(P_{SN}(\noyau_{m,c'}))]_{:,j} = R_{SN}^{(j)}(x_{m,c,c',j\%S}) \;.
\end{align}
Let $\overline{f}_{k-1} = \begin{bmatrix}
0_{k-1} \\ 1 \\ 0_{k-1}
\end{bmatrix} \in \RR^{2k-1}$.
For all $s \in \llbracket0,S-1 \rrbracket$, we denote by 
\begin{align}\label{x c c' s}
    x_{c,c',s} =\sum_{m=1}^M x_{m,c,c',s} - \delta_{c=c'} \overline{f}_{k-1} \in \RR^{2k-1}.
\end{align}
Note that, from \eqref{def bases canoniques}, \eqref{def R_n}, and \eqref{def R_SN^j}, we have for all $j \in \llbracket0,SN-1 \rrbracket$, 
$f_j = R_{SN}^{(j)}(\overline{f}_{k-1})$.
Therefore, $Id_{SN} = (f_0,\ldots,f_{SN-1}) = \left( R_{SN}^{(0)}(\overline{f}_{k-1}), \ldots, R_{SN}^{(SN-1)}(\overline{f}_{k-1}) \right)$.
We set
\begin{align*}
    B_N = \Kk^T\Kk - Id_{CSN} \;.
\end{align*}
We denote by $A_{c,c'}^N \in \RR^{SN \times SN}$ the block $(c,c') \in \llbracket1,C \rrbracket^2$ of size $(SN,SN)$ of $B_N$.
Using \eqref{1 point}, \eqref{2 points}, and \eqref{x c c' s}, we have, for all $j \in \llbracket0,SN-1 \rrbracket$,
\begin{align}
    {\left[A_{c,c'}^N\right]}_{:,j}
    &= \left[\sum_{m=1}^{M} C(P_{SN}(\noyau_{m,c}))^T S_N^T S_N C(P_{SN}(\noyau_{m,c'})) - \delta_{c=c'}Id_{SN}\right]_{:,j} \nonumber \\
    &= \sum_{m=1}^{M} R_{SN}^{(j)}(x_{m,c,c',j\%S}) - \delta_{c=c'} R_{SN}^{(j)}(\overline{f}_{k-1}) \nonumber \\
    &= R_{SN}^{(j)}(x_{c,c',j\%S}) \label{3 points} \;.
\end{align}
We then proceed in the same way as in the RO case.
Since $B_N$ is clearly symmetric, we have
\begin{align*}
    {\|B_N\|}_2^2  \leq {\|B_N\|}_1 {\|B_N\|}_{\infty} 
     = {\|B_N\|}_1^2
    & = \left(\max_{1 \leq j \leq CSN} \sum_{i=1}^{CSN} |(B_N)_{i,j}|\right)^2 \\
    &= \max_{1 \leq c' \leq C \ , \ 0 \leq j \leq SN-1} \left(\sum_{c=1}^{C} {\|{\left[A_{c,c'}^N\right]}_{:,j}\|}_1 \right)^2 \;.
\end{align*}
Using \eqref{3 points} and \eqref{norme R_SN j (x) = norme x}, this becomes
\begin{align*}
    \|B_N\|_2^2  \leq \max_{\substack{1 \leq c' \leq C \\ 0 \leq j \leq SN-1}} \left(\sum_{c=1}^{C} {\|R_{SN}^{(j)}\left(x_{c,c',j\%S}\right)\|}_1 \right)^2 
     = \max_{\substack{1 \leq c' \leq C \\ 0 \leq s \leq S-1}} \left(\sum_{c=1}^{C} {\|x_{c,c',s}\|}_1 \right)^2 \;.
\end{align*}
We set $(c'_0,s_0) \in \arg\max_{\substack{1 \leq c' \leq C \\ 0 \leq s \leq S-1}} \left(\sum_{c=1}^{C} {\|x_{c,c',s}\|}_1 \right)^2$.
Using that for all $x \in \RR^n$, $\|x\|_1 \leq \sqrt{n} \|x\|_2$, we have
\begin{align*}
    \|B_N\|_2^2  \leq \left(\sum_{c=1}^{C} {\|x_{c,c'_0,s_0} \|}_1 \right)^2 
     \leq (2k-1)\left(\sum_{c=1}^{C} {\|x_{c,c'_0,s_0}\|}_2 \right)^2 \;.
\end{align*}
Using Cauchy-Schwarz inequality, we obtain
\begin{align*}
    \|B_N\|_2^2  \leq (2k-1)C \sum_{c=1}^{C} {\|x_{c,c'_0,s_0}\|}_2^2  
     \leq (2k-1)C \sum_{c=1}^{C} \sum_{c'=1}^{C}
    \sum_{s=0}^{S-1}
    {\|x_{c,c',s}\|}_2^2 \;.
\end{align*}
Using \eqref{norme R_SN j (x) = norme x} in the particular case of $N'=2k-1$, we obtain
\begin{align*}
    \|B_N\|_2^2 & \leq (2k-1)C \sum_{c=1}^{C} \sum_{c'=1}^{C}
    \sum_{s=0}^{S-1}
    {\|R_{S(2k-1)}\left(x_{c,c',s}\right)\|}_2^2  \\
    & = C \sum_{c=1}^{C} \sum_{c'=1}^{C}
    \sum_{s=0}^{S-1} (2k-1)
    {\|R_{S(2k-1)}\left(x_{c,c',s}\right)\|}_2^2 \\
    & = C \sum_{c=1}^{C} \sum_{c'=1}^{C}
    \sum_{j=0}^{S(2k-1)-1}
    {\left\|R_{S(2k-1)}^{(j)}\left(x_{c,c',j\%S}\right)\right\|}_2^2 \;.
\end{align*}
Using \eqref{3 points} for $N'=2k-1$, we obtain
\begin{align*}
    {\|B_N\|}_2^2  \leq C \sum_{c=1}^{C} \sum_{c'=1}^{C}
    \sum_{j=0}^{S(2k-1)-1}
    {\left\|{\left[A_{c,c'}^{2k-1}\right]}_{:,j}\right\|}_2^2 
     = C {\|B_{2k-1}\|}_F^2 \;.
\end{align*}
Using Theorem \ref{prop norme Frobenius}
for $N=2k-1$, we have $\|B_{2k-1}\|_F^2 = (2k-1) L_{orth}(\noyau)$ and we obtain
\begin{align*}
    {\|B_N\|}_2^2 &\leq (2k-1)C L_{orth}(\noyau) \;.
\end{align*}
Therefore, we conclude that, in the CO case 
\[\left(\ERR{s}{N}(\noyau)\right)^2 \leq (2k-1)C L_{orth}(\noyau) \;.
        \]
This concludes the proof in the 1D case.

\subsection{Sketch of the Proof of Theorem \ref{Prop norme spectrale}, for 2D Convolutional Layers}
In the RO case, we proceed as in the 1D case.\\
In the CO case, we first prove a lemma similar to Lemma \ref{Ch^T S_n^T S_n Cg}, then we proceed as in the 1D case.

\section{Proof of Proposition \ref{lemme lipchitz norme spectrale}}\label{proof lemme lipchitz norme spectrale}
Below, we prove Proposition \ref{lemme lipchitz norme spectrale} for a general matrix $A \in \RR^{a \times b}$ with $a \geq b$.
In order to obtain the statement for a convolutional layer $\Kk \in \RR^{MN \times CSN}$:\\
In the RO case ($M \leq CS$): we take $A = \Kk^T$, $a=CSN$, $b=MN$.\\
In the CO case ($M \geq CS$): we take $A = \Kk$, $a=MN$, $b=CSN$.\\

Let $A \in \RR^{a \times b}$ such that $a \geq b$.
We denote by $\varepsilon = \|A^TA-Id_b\|_2$.
Let $x \in \RR^b$, we have
\begin{align*}
    \left|\|Ax\|^2 - \|x\|^2 \right| = \left|x^TA^TAx - x^{T}x \right| = \left|x^T(A^TA-Id_b)x\right| &\leq \|x^T\| \| A^TA-Id_b \|_2 \| x \| \\ &\leq \varepsilon \|x\|^2 \;.
\end{align*}
Hence, for all $x \in \RR^b$,
\begin{align*}
    (1 - \varepsilon)\|x\|^2 \leq \|Ax\|^2 \leq (1 + \varepsilon) \|x\|^2 \;.
\end{align*}
This also implies $\sigma_{max}(A)^2 \leq 1 + \varepsilon$.
But we know that $\sigma_{max}(A^T) = \sigma_{max}(A) $, hence $\sigma_{max}(A^T)^2 \leq 1 + \varepsilon$ and therefore, for all $x \in \RR^a$,
\begin{align*}
    \|A^Tx\|^2 \leq (1 + \varepsilon) \|x\|^2 \;.
\end{align*}

Finally:
\begin{itemize}
    \item In the RO case, for $\varepsilon = \ERR{s}{N}(\noyau) = \|\Kk \Kk^T -Id_{CSN}\|_2 $, $\Kk$ is $\varepsilon$-AIP.
    \item In the CO case, for $\varepsilon = \ERR{s}{N}(\noyau) =  \|\Kk^T \Kk-Id_{MN}\|_2$, $\Kk$ is $\varepsilon$-AIP.
\end{itemize}

\section{Experiment Configurations}
In this section, we describe the details of the experiments conducted on Cifar10 an Imagenette data sets.

\subsection{Cifar10 Experiments}
\label{Cifar_appendix}
The network architecture used for Cifar10 data set is described in Table~\ref{tab:NN_archi_cifar10} (1.1 million parameters). Conv2D layer will depend on the configuration: classical $Conv2D$ for unconstrained reference configuration, $CayleyConv$ for $Cayley$ configuration 
\citep[we use][implementation]{trockman2021orthogonalizing},
and $L_{orth}$ regularization for $L_{orth}$ configuration (we use our implementation according to Definition~\ref{Lorth-def}). Weight initialization is done according to \textit{Glorot uniform}.

\begin{table}[ht]
  \label{tab:NN_archi_cifar10}
  \centering
  \begin{tabular}{lll}
    \toprule
        Layer     & Parameters $(M,C,k,k)$ & Output size $(M,H,W)$  \\
    \midrule
    Input   & $ $ & $32\times 32\times 3$     \\
    Conv2D, GS2 & $(64,3,3,3)$   &    $64\times 32\times 32$   \\
    Conv2D, GS2  & $(66,64,3,3)$   & $66\times 32\times 32$     \\
    InvDown  & $ $   & $264\times 16\times 16$     \\
    Conv2D, GS2  & $(64,264,3,3)$   & $64\times 16\times 16$     \\
    Conv2D, GS2  & $(128,64,3,3)$   & $128\times 16\times 16$     \\
    Conv2D, GS2  & $(130,128,3,3)$   & $130\times 16\times 16$     \\
    InvDown  & $ $   & $520\times 8\times 8$     \\
    Conv2D, GS2  & $(128,520,3,3)$   & $128\times 8\times 8$     \\
    Conv2D, GS2  & $(192,128,3,3)$   & $192\times 8\times 8$     \\
    Conv2D, GS2  & $(194,192,3,3)$   & $194\times 8\times 8$     \\
    InvDown  & $ $   & $776\times 4\times 4$     \\
    Conv2D, GS2  & $(192,776,3,3)$   & $192\times 4\times 4$     \\
    Flatten, Dense & $ (10,3072) $  & $10$     \\
    \bottomrule
  \end{tabular}
  \caption{Cifar10 Neural network architectures: Conv2D, GS2 is GroupSort2, InvDown is InvertibleDownsampling~\citep{trockman2021orthogonalizing}}
\end{table}

Task loss is the classical cross-entropy (CE). As described in~\citet{bethune2022PayAttention}, 1-lipschitz property of orthogonal neural network may prevent learning  with CE and require introducing a temperature, i.e. multiply the network predictions/logits by a factor $\tau$. Experiments for $Cayley$ and $L_{orth}$ are done with $\tau=20$ (and $\tau=1$ for classical $Conv2D$).

We use classical data augmentation: random translation ($\pm 10\%)$, random rotation ($\pm 15$ degree), random horizontal flipping (0.5 probability), random contrast modification ($[0.8,1.2]$). For affine transformation zero-padding is used when required. The initial learning rate is set to 0.03, and linearly decreased down to $3\times10^{-4}$.

The $E_{rob}$ and $E_{lip}$ metrics are computed using the code provided by~\citet{trockman2021orthogonalizing}. 

\subsection{Imagenette Experiments}
\label{Imagenette_appendix}
The network architecture used for Imagenette data set is described in Table~\ref{tab:NN_archi_imagenette} (1.2 million parameters). Conv2D layer will depend on the configuration: classical $Conv2D$ for unconstrained reference configuration, $CayleyConv$ for $Cayley$ configuration 
\citep[we use][implementation]{trockman2021orthogonalizing},
and $L_{orth}$ regularization for $L_{orth}$ configuration (we use our implementation according to Definition~\ref{Lorth-def}). Weight initialization is done according to \textit{Glorot uniform}.

\begin{table}[ht]
  \caption{Imagenette Neural network architectures: Conv2D, GS2 is GroupSort2, InvDown is InvertibleDownsampling~\citep{trockman2021orthogonalizing}}
  \label{tab:NN_archi_imagenette}
  \centering
  \begin{tabular}{lll}
    \toprule
        Layer     & Parameters $(M,C,k,k)$ & Output size $(M,H,W)$  \\
    \midrule
    Input   & $ $ & $3\times 160\times 160$     \\
    Conv2D, GS2 & $(32,3,3,3)$   &    $32\times 160\times 160$   \\
    Conv2D, GS2  & $(34,32,3,3)$   & $34\times 160\times 160$     \\
    InvDown  & $ $   & $136\times 80\times 80$     \\
    Conv2D, GS2  & $(32,136,3,3)$   & $32\times 80\times 80$     \\
    Conv2D, GS2  & $(64,32,3,3)$   & $64\times 80\times 80$     \\
    Conv2D, GS2  & $(66,64,3,3)$   & $66\times 80\times 80$     \\
    InvDown  & $ $   & $264\times 40\times 40$     \\
    Conv2D, GS2  & $(64,264,3,3)$   & $64\times 40\times 40$     \\
    Conv2D, GS2  & $(96,64,3,3)$   & $96\times 40\times 40$     \\
    Conv2D, GS2  & $(98,96,3,3)$   & $98\times 40\times 40$     \\
    InvDown  & $ $   & $392\times 20\times 20$     \\
    Conv2D, GS2  & $(96,392,3,3)$   & $96\times 20\times 20$     \\
    Conv2D, GS2  & $(128,96,3,3)$   & $128\times 20\times 20$     \\
    Conv2D, GS2  & $(130,128,3,3)$   & $130\times 20\times 20$     \\
    InvDown  & $ $   & $520\times 10\times 10$     \\
    Conv2D, GS2  & $(128,520,3,3)$   & $128\times 10\times 10$     \\
    Conv2D, GS2  & $(160,128,3,3)$   & $160\times 10\times 10$     \\
    Conv2D, GS2  & $(162,160,3,3)$   & $162\times 10\times 10$     \\
    InvDown  & $ $   & $648\times 5\times 5$     \\
    Conv2D, GS2  & $(160,648,3,3)$   & $160\times 5\times 5$     \\
    Flatten, Dense & $ (10,4000) $  & $10$     \\
    \bottomrule
  \end{tabular}
\end{table}

Task loss is the classical cross-entropy (CE). As described in~\citet{bethune2022PayAttention}, 1-lipschitz property of orthogonal neural network may prevent learning  with CE and require introducing a temperature, i.e. multiply the network predictions/logits by a factor $\tau$. Experiments for $Cayley$ and $L_{orth}$ are done with $\tau=20$ (and $\tau=1$ for classical $Conv2D$).The initial learning rate is set to $5\times10^{-4}$, and linearly decreased down to $5\times10^{-6}$.

Input images are normalized per channel using the recommended mean and std ($[0.485, 0.456, 0.406]$, $[0.229, 0.224, 0.225]$). The only data augmentation used is random horizontal flipping (0.5 probability).

\section{Computing the Singular Values of \texorpdfstring{$\Kcal$}{Kcal}}
\label{sec:poweriter}

In this appendix, we describe methods for computing singular values of a 2D \Kcalname, with or without stride. The codes are provided in the \Deellip\footnote{\url{https://github.com/deel-ai/deel-lip}} library.

\subsection{Computing the Singular Values of \texorpdfstring{$\Kcal$}{} when \texorpdfstring{$S=1$}{}}

For convolutional layers without stride, $S=1$, we use the algorithm described in \citet{sedghi2018singular}. We describe the algorithm for 2D convolutional layers in Algorithm \ref{S=1_algo}. The algorithm provides the full list of singular values.

\begin{algorithm}
\caption{Computing the list of singular values of $\Kcal$, when $S=1$, \citep{sedghi2018singular}.}\label{S=1_algo}
\begin{algorithmic}[1]
\Require \Kbfname: $\Kbf\in\KbfS$, channel size: $N\geq k$
\Ensure list of the singular values of $\Kcal$:  $\sigma$
\Procedure{computeSingularValues}{$\Kbf$,$N$}
\State transforms = np.fft.fft2($\Kbf$, ($N$,$N$), axes=[0, 1])\Comment{np stands for numpy}
\State sigma = np.linalg.svd(transforms, compute\_uv=False)
\State \textbf{return} sigma
\EndProcedure
\end{algorithmic}
\end{algorithm}

\subsection{Computing the Smallest and the Largest Singular Value of \texorpdfstring{$\Kcal$}{Kcal} for any Stride \texorpdfstring{$S$}{S}}
For convolutions with stride, $S>1$, there is no known practical algorithm to compute the list of singular values $\sigma$. In this configuration, we use the well-known power iteration algorithm associated with
a spectral shift to compute the smallest and the largest singular value ($\sigma_{min},\sigma_{max}$) of $\Kcal$. We give the principle of the algorithm in Algorithm \ref{anyS_algo}. For clarity, we assume a function '$\lambda$ = power\_iteration($M$,$u_{init}$)', that applies the power iteration algorithm to a symetric matrix $M$ starting from a random vector $u_{init}$, and returns its largest eigenvalue $\lambda$. In practice, of course, we cannot construct $M$ and the implementation must use the usual functions that apply $\Kcal$ and $\Kcal^T$. A detailed python implementation is provided in the \Deellip library.

\begin{algorithm}        
\begin{algorithmic}              
\Require \Kbfname: $\Kbf\in\KbfS$, channel size: $N\geq k$, stride parameter: $S\geq 1$
\Ensure the smallest and the largest singular value of $\Kcal$:  $(\sigma_{min},\sigma_{max})$
\Procedure{computeMinAndMaxSingularValues}{$\Kbf$,$N$,$S$}
\State \If{$CS^2 \geq M$} \Comment{RO case}
\State u = np.random.randn($M$,$N$,$N$)
\State lambda\_1 = power\_iteration($\Kcal\Kcal^T$, u)
\State bigCste = 1.1* lambda\_1
\State u = np.random.randn($M$,$N$,$N$)
\State lambda\_2 = power\_iteration(bigCste$.\Id_{MN^2} - \Kcal\Kcal^T$, u)
\Else\Comment{CO case}
\State u = np.random.randn($C$,$SN$,$SN$)
\State  lambda\_1 = power\_iteration( $\Kcal^T\Kcal$, u )
\State  bigCste = 1.1* lambda\_1
\State u = np.random.randn($C$,$SN$,$SN$)
\State  lambda\_2 = power\_iteration(bigCste$.\Id_{CS^2N^2} - \Kcal^T\Kcal$, u)
\State \EndIf
\State sigma\_max = np.sqrt(lambda\_1)
\State sigma\_min = np.sqrt(bigCste-lambda\_2)

\State \textbf{return} (sigma\_min,sigma\_max)
\EndProcedure
\end{algorithmic}
\caption{Computing $(\sigma_{min},\sigma_{max})$, for any $S\geq 1$.\label{anyS_algo}}     
\end{algorithm}

\bibliography{ref}

\fi 
\end{document}